\newtheorem{thm}{Theorem}[section]
\newtheorem*{thm*}{Theorem}   
\newtheorem*{lemma*}{Lemma}
\newtheorem{prop}[thm]{Proposition}
\newtheorem*{ex*}{Exemple \ref{exJ2} (continued)}
\newtheorem*{Ex:Ce/e3}{Example \ref{Ex:Ce/e3} (continued)}
\newcommand{\p}{{\mathbb P}}
\renewcommand{\ss}{\operatorname{ss}}
\newcommand{\rk}{\operatorname{rk}}
\newcommand{\map}{\dasharrow}
\def\S{Section~}
\newcommand{\Rad}{\operatorname{{\rm Rad}}}
\newcommand{\bk}{\bigskip}
\newcommand{\sk}{\smallskip}
\newcommand{\mk}{\medskip}
\def\og{\leavevmode\raise.3ex\hbox{$\scriptscriptstyle\langle\!\langle$~}}
\def\fg{\leavevmode\raise.3ex\hbox{~$\!\scriptscriptstyle\,\rangle\!\rangle$}}
\def\ro[#1]{{\textcolor{red}{#1}}}
\newcommand{\incl}[1][r]
  {\ar@<-0.2pc>@{^(-}[#1] \ar@<+0.2pc>@{-}[#1]}
\newcommand{\eq}[1][r]
   {\ar@<-3pt>@{-}[#1]
    \ar@<-1pt>@{}[#1]|<{}="gauche"
    \ar@<+0pt>@{}[#1]|-{}="milieu"
    \ar@<+1pt>@{}[#1]|>{}="droite"
    \ar@/^2pt/@{-}"gauche";"milieu"
    \ar@/_2pt/@{-}"milieu";"droite"}
\begin{document}

\title{
Quadro-quadric Cremona transformations \\ in low dimensions
via the $JC$-correspondence}  

\author{Luc PIRIO${}^\dagger$}
\thanks{${}^\dagger$ Partially  supported by the franco-italian research network GRIFGA and by the P.R.A. of the Universit\`a degli Studi di Catania.}
\author{Francesco RUSSO}

\address{Luc PIRIO, IRMAR, UMR 6625 du CNRS, Universit Rennes1, Campus de beaulieu, 
35000 Rennes, France}

\email{luc.pirio@univ-rennes1.fr} 
\address{Francesco RUSSO, Dipartimento di Matematica e Informatica, Universit\`a degli Studi di Catania,  Viale A. Doria 6, 95125  Catania, Italy}
\email{frusso@dmi.unict.it}

\maketitle

\begin{abstract} 
 We apply the  results of \cite{PR2} to study  
 quadro-quadric Cremona transformations in low-dimensional projective spaces.
 In particular  we describe new very simple families of such birational maps and obtain  complete and explicit classifications in dimension four and five. 
 \end{abstract}

\section*{Introduction}
The study of Cremona transformations is a quite venerable subject which  received  a lot of contributions classically (we refer to \cite{SBCDELSS} for a rich  overview of  the field  up to 1928, to the classical reference \cite{hudson}
and to \cite{SR} for an approach to the subject with classical methods but with a view to higher dimensional cases).  More recently there was a renewed interest in the subject, also  for the relations with complex dynamics which inspired the solution to the longstanding problem about the simplicity of the group of Cremona transformations in the plane, answered in the negative in \cite{Cantat}. The situation in the plane can be considered quite clear, also considering the classification of  finite subgroups of the Cremona group in the plane completed recently by Blanc (see \cite{Blanc} and the references therein). In higher dimension the structure  is more complicated
and also the generators of the Cremona groups are unknown in contrast to the case of the plane
where the classical Noether-Castelnuovo Theorem assures that the ordinary quadratic transformation and projective transformations generate the group. Thus since the very beginning
the study of Cremona transformations in higher dimensional projective spaces was developed only for particular classes (see for example \cite{EinShB,  pan, PRV}) or for specific degrees of the homogeneous polynomials defining the map. From this point of view the first cases of interest  are those defined by quadratic polynomials (see \cite{semple,brunoverra,PRV}) and thus the {\it simplest} examples of Cremona transformations, different from projective automorphisms, are those
whose inverse is also defined by quadratic polynomials, dubbed {\it quadro-quadric} Cremona transformations. In \cite{PRV} it is obtained the complete classification of quadratic Cremona transformations of $\p^3$ while \cite{semple} considers the case of $\p^4$, describing the general base locus scheme of these Cremona maps (see also the discussion in Section \ref{S:qqP4-Bruno-Verra}). 
The more recent preprint \cite{brunoverra} deals with the classification of general quadro-quadric Cremona transformations in 
$\p^4$ and $\p^5$ and provides some series of examples in arbitrary dimension.\sk 

A completely new approach to the subject of quadro-quadric Cremona transformations was began in \cite{PR1} and completed in \cite{PR2}, where general results and structure Theorems for these maps were presented. Indeed, 
in \cite{PR2}, we  proved that for every $n\geq 3$, there are equivalences between:
 \begin{itemize}
 \item irreducible $n$-dimensional non degenerate projective varieties $\boldsymbol{X}\subset \p^{2n+1}$ different from rational normal scrolls and 3-covered by rational cubic curves, up to projective equivalence;\sk 
\item $n$-dimensional complex {\bf{J}}ordan algebras of rank three, up to isomorphisms;\sk
 \item quadro-quadric {\bf{C}}remona transformations of $\p^{n-1}$, up to linear equivalence\footnote{Two Cremona maps $f_1,f_2$ of $\mathbb P^n$ are said to be {\it linearly equivalent} if $f_2=g \circ f_1\circ h$ for some linear automorphisms $g,h\in {\rm Aut}(\p^n)$.}.\sk 
 \end{itemize}

The  equivalence between these sets  has been named the  `{\it $XJC$-correspondence}' in \cite{PR2}\footnote{Note  that  Jordan algebras are considered  up to isomorphisms in the present paper whereas they were originally considered up to isotopies  in \cite{PR2}. If the later setting  is more natural from a categorical point of view (see \cite[Remark 4.2]{PR2}),  the $XJC$-correspondence still holds true at the set-theoretical level when considering rank 3 Jordan algebras up to isomorphisms. The reason behind this is 
the well-known fact that over the field of complex numbers,  two isotopic Jordan algebras are actually isomorphic  (cf. \cite[Problem 7.2.(6)]{McCrimmon-book}).}. In this text, we use mainly one part of this correspondence, that we call the $JC$-correspondence, that essentially asserts that for every $n\geq 3$: 
{\it any quadro-quadric Cremona transformation $f:\p^{n-1}\dashrightarrow \p^{n-1}$  is linearly equivalent to the projectivization of the adjoint  map $x\mapsto x^{\#}$ of  a $n$-dimensional rank 3 Jordan algebra, the isomorphism  class of which is uniquely determined by the linear equivalence class of $f$} (see Section \ref{Notation} for notation and definitions).

The $JC$-correspondence  does not only offer a conceptual interest but also provides   an effective   way to study concretely quadro-quadric Cremona transformations.
Indeed, the theory of Jordan algebras is now well developed and formalized so that one has at disposal several quite deep  results and powerful algebraic tools to study them conceptually and/or  effectively. 
\sk 

Armed by the powerful algebraic machinery of the theory of Jordan algebras, in this paper we
\begin{itemize}
\item  describe a general algebraic method to construct new quadro-quadric Cremona transformations starting from known ones (Section \ref{S:newQQCremona});\mk
\item use the above  method to construct, starting from the standard Cremona transformation of $\mathbb P^2$, a very simple countable family of quadro-quadric Cremona transformations of $\mathbb P^n$ for arbitrary $n\geq 2$ (Section \ref{S:verysimplefamily});\mk 
\item  use the previous construction  to produce  continuous families of quadro-quadric Cremona transformations starting from those associated to  simple rank three Jordan algebras (Section \ref{S:ContinuousFamily});\mk 
\item explain two distinct general constructions of  cubo-cubic Cremona transformations starting from a quadro-quadric one (Section \ref{S:From22to33});
\mk
\item give complete classifications of rank 3 Jordan algebras in dimension 3, 4 and 5 (in Sections \ref{S:JordanDim3}, \ref{S:JordanDim4} and \ref{S:JordanDim5}) and deduce from them the complete classifications, up to linear equivalence, of quadro-quadric Cremona transformations of $\mathbb P^n$ for $n=2,3$ and $4$ (see Table 2, Table 3 and Table 6 respectively); 
\mk
\item provide a detailed analysis of the quadro-quadric Cremona transformations of  $\mathbb P^4$ (in Section \ref{S:Bir22P4}).   For each  Cremona transformation of Table 6, we describe as geometrically and concisely as possible the associated base locus scheme and the homaloidal system of quadrics, determine its type and  its  multidegree (see Table 7, Table 8 and Table 9).   We also offer pictures  of these base locus schemes (see Figure 5, Figure 6);
\mk 
\item give (without proof) a complete list of  involutorial  normal forms for elements of quadro-quadric Cremona transformations of $\p^5$ (cf. Table 11). 
\mk
\end{itemize}
\smallskip

\hspace{-0.4cm}{\bf Acknowledgments.} The first author is very grateful to Professor H. Petersson  for having copied  and sent  to him Wesseler's report \cite{wesseler}.

\section{Notation and definitions}${}^{}$
\label{Notation}

\subsubsection{General notation and definitions}
If $f=[f_0:\cdots:f_n]:\p^n\map\p^n$ is a    Cremona transformation, then 
$\mathcal B_f\subset\p^n$ will be the {\it  base locus scheme of $f$}, that is the scheme defined by the $n+1$ homogeneous forms  $f_0,\ldots,f_n$ that are assumed without  common factor. If all these are of  degree $d_1\geq 1$
and if the inverse $f^{-1}:\p^n\map\p^n$  of $f$ is defined by forms of degree $d_2$ without  common factorwe say that $f$ has {\it bidegree} $(d_1, d_2)$ and we put ${\rm bdeg}(f)=(d_1,d_2)$.   The set of Cremona transformations  of $\p^{n}$ of bidegree $(d_1,d_2)$ will be indicated by ${\bf Bir}_{d_1,d_2}(\mathbb P^{n})$.

By definition, a {\it quadro-quadric Cremona transformation} is just a Cremona transformation of bidegree  $(2,2)$.

Two Cremona transformations  $f, \tilde f\in {\bf Bir}_{d_1,d_2}(\mathbb P^{n})$ are said to be {\it linearly equivalent} (or just {\it equivalent} for short) if there exist  projective transformations  $\ell_1,\ell_2: \mathbb P^{n}\rightarrow \mathbb P^{n}$ such that $ \tilde f= \ell_1 \circ f\circ  \ell_2$.

We define the {\it type of a Cremona transformation}  $f:\mathbb P^n\dashrightarrow \mathbb P^n$ as
the irreducible component of the Hilbert scheme of $\mathbb P^n$ to which  $\mathcal B_f$ belongs. We put
$
P_i(t)={t+i \choose i}
$
 so that $P_0=1$, $P_1(t)=t+1$ and $P_2(t)=\frac{(t+2)(t+1)}{2}$.
\mk

By definition, a {\it Jordan algebra} is a commutative complex algebra $J$  with a unity $e$ such that the  {\it Jordan identity}
$x^2(xy)=x(x^2y)$
holds for every $x,y\in J$ (see \cite{jacobson, McCrimmon-book}).  Here we shall also assume that $J$ is finite dimensional.
It is well known that a Jordan algebra is power-associative.
 By definition, the {\it rank} $\rk(J)$  of  $J$ 
is the complex dimension of the (associative) subalgebra  
$\langle x \rangle $ of $J$ spanned by the unity $e$ and by
 a general element $x\in  J$.

The simplest examples of Jordan algebras are those constructed from associative algebras. Let $A$ be a non-necessarily commutative associative algebra with a unity.  Denote by $A^+$ the vector space $A$  with the symmetrized product $a\cdot a'=\frac{1}{2}(aa'+a'a)$.  Then $A^+$ is a Jordan algebra. Note that   $A^+=A$ if  $A$ is commutative.

By $\mathcal J_{q,r}^m$  we will indicate a $m$-dimensional direct sum $\mathbb C\oplus W$ endowed with the Jordan product  $(\lambda,w)\cdot (\lambda',w')=(\lambda \lambda'-q(w,w'),\lambda w'+\lambda'w)$, where $q(\cdot,\cdot)$ is the polarization of a quadratic form $q$ on $W$,  of rank $r$. When we write $\mathcal J_{q}^m$ we will assume that $W=\mathbb C^{m-1}$ and that $q(x)=\sum_{i=1}^{m-1}x_i^2$ 
   in the standard system of coordinates $x=(x_1,\ldots,x_{m-1})$ on $\mathbb C^{m-1}$. Otherwise when the integer $r<m-1$ is specified we shall assume that $q(x)=\sum_{i=1}^{r}x_i^2$.  Except when $W=0$, a Jordan algebra $\mathcal J_{q,r}^m$ has rank 2. 
\smallskip

A Jordan algebra of rank 1 is isomorphic to $\mathbb C$ (with the standard multiplicative product). It is a classical and simple fact that any rank 2 Jordan algebra is isomorphic to an algebra $\mathcal J^m_{q,r}$ defined above.
In this paper, we will mainly consider Jordan algebras of rank 3.  
These are the simplest Jordan algebras which have not been yet classified in arbitrary dimension.
Due to the $JC$-correspondence  their classification is  equivalent to that of  quadro-quadric Cremona transformations in arbitrary dimensions, showing the complexity of the problem. 
\smallskip

Let $J$ be a rank 3 Jordan algebra. The general theory specializes in this case and ensures the existence of a linear form $T:J\rightarrow \mathbb C$ (the {\it generic trace}), of a quadratic form $S\in {\rm Sym}^2(J^*)$ and of a cubic form 
$N\in {\rm Sym}^3(J^*)$  (the {\it generic norm}) such that 
$x^3-T(x)x^2+S(x)x-N(x)e=0$
for every $x\in J$.  Moreover, 
$x$ is invertible in $ J$ if and only if $N(x)\neq 0$ and in this case  
$x^{-1}={N(x)}^{-1}{x^\#}$, where $x^\#$ stands for the {\it adjoint} of $x$ defined by 
$x^\#=x^2-T(x)x+S(x)e$. The adjoint  satisfies the identity 
$(x^\#\big)^\#=N(x)x.$

The algebra $M_n(\mathbb C)$ of $n\times n$ matrices with complex entries is associative hence  $M_n(\mathbb C)^+$ is  a Jordan algebra. 
  According to  Cayley-Hamilton Theorem, it is of rank $n$,  the generic trace of $M\in M_n(\mathbb C)^+$ is the usual one,  $N(M)=\det(M)$ and the adjoint is the matrix one, that is the transpose of the cofactor matrix of $M$.\smallskip 

 For $x=(\lambda,w)\in
\mathcal J^m_{q,r}$, one has   $x^2-T(x)x+N(x)e=0$ with 
 $T(x)=3\lambda$ and  $ N(x)=\lambda^2+q(w)$. Thus it can be verified that  $x^\#=(\lambda,-w)$ in this case.
\smallskip 
 
More generally let $J$ be a power-associative algebra with $r=\rk(J)\geq 2$. Then defining analogously the adjoint of an element $x\in J$, the identity $(x^\#\big)^\#=N(x)^{r-2}x$  holds so that   the projectivization of the adjoint $[\#]:\mathbb P(J)\map\mathbb P(J)$ 
is a birational involution  of bidegree $(r-1,r-1)$ on $\mathbb P(J)$.

 The inverse map
$x\mapsto x^{-1}={N(x)}^{-1}{x^{\#}}$  on $J$   naturally induces a birational involution 
$\widetilde \jmath:\p(J\times\mathbb C)\map
\p(J\times \mathbb C)$ of bidegree $(r,r)$, defined by $\widetilde \jmath([x,r])=[rx^{\#},N(x)]$.
Such  maps were classically investigated by  N. Spampinato and C. Carbonaro Marletta,
see \cite{Spampinato,Carbonaro1,Carbonaro2}, producing examples of interesting Cremona involutions
in higher dimensional projective spaces. It is easy to see that letting $\widetilde J=J\times\mathbb C$, then for $(x,r)\in\widetilde J$
one has $(x,r)^{\#}=(rx^{\#},N(x))$ so that the map $\widetilde \jmath$  is the adjoint map
of the algebra $\widetilde J$. A Cremona transformation 
 of bidegree  $(r-1,r-1)$ 
will be called {\it of Spampinato type} if it is linearly equivalent to the adjoint of a 
direct product  $  J\times\mathbb C$ where $J$ is a power-associative algebra of rank $r\geq 3$. \sk

We will denote by   $J,J', \mathcal J, ...$ complex Jordan algebras of finite dimension and 
$J\times J'$ will stand for  the  direct product of Jordan algebras. We shall write
$R$ for the radical ideal $\Rad(J)$ of  $J$ when no confusion arises. Then $J_{\rm ss}$ will be the  {\it semi-simple part of  $J$}, isomorphic to $J/R$.

With $\mathcal B(J)\subset \mathbb P(J)$ we shall indicate the  base locus scheme of the birational involution $[\#]$ associated to $J$ and 
$\mathcal I_{\mathcal B(J)}$   will be the  graded ideal  generated by the homogenous polynomials  defining  the adjoint of $J$.

\smallskip


\subsubsection{Some non-reduced punctual schemes in projective spaces}
For $n,k$ such that  $0\leq k <n$ and for any nonnegative $t$, one defines  a {\it $t$-multiple  $\mathbb P^k$ in a $\mathbb P^n$} is a subscheme of $\mathbb P^n$ with homogeneous ideal equal to  the $t$-th power of the  ideal of a linear subspace of dimension $k$ in $\mathbb P^n$. \footnote{This terminology is not standard. For instance, it does not correspond exactly to  the one used in \cite{EisenbudHarris}.}

We shall also provide some pictures of the base locus schemes of quadro-quadric Cremona maps in dimension $2,3$ and $4$.  A lot  of these base loci  contain a non-reduced punctual scheme 
as an irreducible component of particular type, which we now describe and picture in this subsection.\medskip 

\paragraph{$-$ {\it Scheme $\tau$.}}
If $\tau_0={\rm Proj}\big( {\mathbb C[a,b]}/{(b,a^2)}\big) $ is the length 2  affine scheme ${\rm Spec}\big( {\mathbb C[\epsilon]}/{(\epsilon ^2)}\big)$ naturally embedded in $\mathbb C\subset \mathbb P^1$, 
we will designate by  $\tau$ any  scheme in a projective space $\mathbb P^n$ obtained as the image $\iota_*(\tau_0)$ for a linear embedding $\iota:\mathbb P^1\hookrightarrow \mathbb P^n$.  More intuitively, 
 $\tau$ is a punctual scheme in a projective space formed by a point $p$ plus another point  infinitely near to it. Since geometrically $\tau$ is nothing but a tangent  direction at one point, this scheme will be pictured as follows in the sequel: 
  \begin{figure}[!h]
\centering
\psfrag{tau}[][][1]{$  $}
 \includegraphics[width=1.5cm,height=1cm]{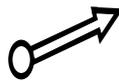}
 \vspace{-0.35cm} 
\caption{Pictural representation of $\tau$}
\end{figure}

  \paragraph{$-$ {\it Scheme $\xi$.}} 
Let $ \xi_0\simeq  
{\rm Spec}( {\mathbb C[a,b]}/{(a^2,ab,b^2)})$ be a double point in $\mathbb P^2$. We will designate by $\xi$ 
any  scheme in a projective space $\mathbb P^n$ obtained as the image $\iota_*(\xi_0)$ for a linear embedding $\iota:\mathbb P^2\hookrightarrow \mathbb P^n$.  Intuitively, 
 $\tau$ is a punctual scheme in a projective space formed by a point $p$ plus two distinct points  infinitely near of $p$. Geometrically $\tau$ is nothing but two tangent  directions at a given point of a projective space, it is reasonable to picture  this scheme as follows in the sequel: 
  \begin{figure}[H]
\centering
\psfrag{tau}[][][1]{$  $}
 \includegraphics[width=2cm,height=1.5cm]{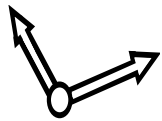}
 \vspace{-0.35cm} 
\caption{Pictural representation of  $\xi$}
\end{figure}

The reader has to be aware that this representation is a bit misleading: indeed, if $p$ stands for the support of  $\xi$,  there is no distinguished tangent directions at $p$ associated to $\xi$. What is intrinsically attached to $\xi$ is the pencil of tangent directions spanned by the two represented in Figure 2. The projective tangent space of $\xi$ at $p$ coincides with the  2-plane $\langle \xi \rangle$  spanned by $\xi$.  
\medskip

  \paragraph{$-$ {\it Scheme $\eta$.}} 
Let $\eta_0={\rm Proj}\big( {\mathbb C[a,b]}/{(b,a^3)}\big) $ be the length 3  affine scheme ${\rm Spec}\big( {\mathbb C[\epsilon]}/{(\epsilon ^3)}\big)$ naturally embedded in $\mathbb C\subset \mathbb P^1$. 
We will designate by  $\eta$ any  scheme in a projective space $\mathbb P^n$ obtained as the image $\nu_*(\eta_0)$ for a quadratic Veronese embedding $\nu:\mathbb P^1\hookrightarrow \mathbb P^n$.  More intuitively, 
 $\eta$ is a punctual scheme in a projective space formed by a point $p=\eta_{\rm red}$ lying on a conic $C\subset \mathbb P^n$, plus another point  $p'$ infinitely near to $p$ on $C$, plus a third point $p''$ still on $C$, infinitely near to $p'$. In geometrical terms  $\eta$ is nothing but an osculating flag of the second order\footnote{The {\it osculating flag of order $k$} of a (reduced) curve $C\subset \mathbb P^n$ at a smooth point $c$ is the flag $\{c\}=T^{(0)}_c C\subset 
 T^{(1)}_c C\subset  \cdots \subset T^{(k)}_c C$ where for any non-negative integer  $l$, $T^{(l)}_c C\subset \mathbb P^n$  stands for the osculating space of order $l$ to $C$ at $c$.}  to a smooth conic thus    this scheme will be pictured as follows in the sequel: 
  \begin{figure}[H]
\centering
\psfrag{tau}[][][1]{$  $}
 \includegraphics[width=2cm,height=2cm]{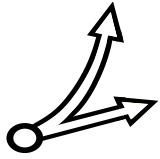}
 \vspace{-0.35cm} 
\caption{Pictural representation of  $\eta$}
\end{figure}
By definition, the {\it tangent line to $\eta$}, noted by $T_p \eta$,  is the line determined by $p$ and $p'$, which is included 
in the  2-plane $\langle \eta \rangle\subset \mathbb P^n$  spanned by $\eta$, which is also the projective tangent space to $\eta$ at $p$.  

Let $H$ be a hypersurface of  $ \mathbb P^n$. We will say that $H$ {\it osculates} (or {\it is osculating}) along $\eta$ if $\eta\subset H$ as subschemes of $\mathbb P^n$. More geometrically, this means that if $C'\subset \mathbb P^n$ is any curve passing through $p$ with second osculating flag at this point  equal to the one associated to $\eta$ then the intersection multiplicity between  $C' $ and $H$ at $p$ is at least 3. 
\medskip

 \paragraph{$-$ {\it Scheme $\chi$.}}  Let's define 
$\chi={\rm Proj}\big(  {\mathbb C[x,y,z,t]}/{(x^2, xy,y^2 , z^2 , xz, 2yz - xt )}   \big)\subset \mathbb P^3$. 
This scheme can be characterized from an algebraic and geometric point of view as follows:  according to \cite[p. 445]{NotariSpreafico}, up to projective equivalence, there exist only two  punctual degree four schemes in $\p^3$ defined by  six quadratic equations. The projective tangent tangent space at the point  of one of them is the whole $\mathbb P^3$ while $\chi$ is the other one. By definition, the {\it tangent plane to $\chi$} is the projective tangent space to $\chi$ at the supporting point $p=\chi_{\rm red}$. It is a 2-plane denoted by $T_p\chi$ that is strictly contained in the linear span $\langle \chi\rangle =\p^3$ of $\chi$.   

More intuitively and geometrically, $\chi$ can be obtained as follows:  let $\ell$ be a line  in $\p^3$ that is transverse to a scheme $\eta$ described just above (this means that $\ell$ passes through $p=\eta_{\rm red}$ and is distinct from $T_p \eta$).  For every $x\in  \ell\setminus \{  p \}$, let $\chi_x$ be  the union of $\eta$ with $x$.  Then cleary, $\chi$ is the flat limit of the schemes $\chi_x$ when $x$ tends to $p$. 

This geometrical description of $\chi$ explains that this subschme will be pictured as follows in the sequel: 

  \begin{figure}[H]
\centering
\psfrag{tau}[][][1]{$  $}
 \includegraphics[width=2cm,height=2cm]{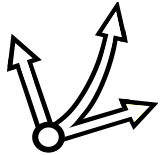}
 \vspace{-0.35cm} 
\caption{Pictural representation of  $\chi$}
\end{figure}

The reader has to be aware that this picture is a bit misleading: the presence of two rectilinear arrows in it does not mean that there are two distinguished tangent directions at $p$ associated to $\chi$. 
What is intrinsically attached to $\chi$ is the 2-plane spanned by the represented two tangent directions, that is nothing but the  tangent plane to $\chi$.
\medskip 

Let $H\subset \p^3$ be a surface.  The schematic condition  that $H$ contains a scheme $\chi$ translates geometrically as the fact that $H$ is tangent to a 2-plane $\pi$ at a fixed point $p\in \pi$ and osculates at order two to a smooth conic $C$ tangent to $\pi$ at this point.



  \section{\bf Constructions and new families of quadro-quadric Cremona transformations}
We now use some the theory of  non-associative algebras to construct algebraically  or/and  geometrically new families of involutorial Cremona transformations in arbitrary dimensions.

 \subsection{Construction of  new quadro-quadric Cremona transformations from known ones}
 \label{S:newQQCremona}

As far as we know, very little  is known about explicit examples of quadro-quadric Cremona transformations of $\mathbb P^{m}$, at least for large enough  $m$.  In fact, there is almost no description in the  literature of such birational maps in higher  dimension.  A notable exception is the  series of quadratic elementary Cremona transformations (and their possible degenerations), that have been considered in arbitrary dimension by classical authors being obtained easily from an irreducible quadric hypersurface in $\p^{m+1}$ by two different projections from  smooth points.  Recently,    Bruno and Verra have described  ({\it cf.} Proposition 6.2 of \cite{brunoverra}) two new families of Cremona transformations of bidegree $(2,2)$, one in each  dimension $m\geq 4$, by specifying the associated base locus schemes that can be :   \smallskip  
\begin{itemize}
\item[(A)] the union $\Pi_2\cup \Pi_3\cup \ell \subset \p^m$ where  $\Pi_i$ is a projective subspace of codimension $i$ for $i=2,3$ and where $\ell$ is a line such that $\Pi_3$ and $\ell$ are disjoint and each intersects $\Pi_2$ in dimension $m-4$ and 0 respectively;\smallskip 
\item[(B)] the schematic union of a double linear subspace $\mathcal P$ of dimension ${m-3}$ in a 
 hyperplane $H$ with a smooth conic $C$  that is tangent to  $H$ at the point $C\cap \mathcal P_{\! \rm red}$. \medskip 
\end{itemize}

By the $JC$-Correspondence recalled above and proved in \cite{PR2}, we know that,   that modulo composition by  linear automorphisms,  quadro-quadric Cremona transformations are nothing but adjoints of  rank 3 Jordan algebras. Here, we use this to describe a general construction of new Cremona transformations of bidegree $(2,2)$ starting from known ones.  This seems to be unknown despite its simplicity (in particular, see  Section \ref{S:verysimplefamily} below). \medskip

Let  $J$ be a Jordan algebra and $M_1,M_2$ be two Jordan $J$-(bi)modules (see \cite{jacobson}): for $i=1,2$, $M_i$ is a (non-unital)  Jordan algebra and there is a bilinear product $J\times M_i\rightarrow M_i: (x,m_i)\mapsto x \cdot m_i$  
such that $J_i=J\oplus M_i$ with the product defined explicitly by 
$$
(x,m_i)\cdot (x',m_i')=(xx',xm_i'+x'm_i+m_im_i')
$$
is a Jordan algebra with unity $(e,0)$,  where $e$ stands for the unity of $J$. 

Then  defines the {\it `gluing of $M_1$ and $M_2$ along $J$} as  the space  
$$J(M_1,M_2)=J\oplus M_1\oplus M_2$$ 
endowed with 
the product $\bullet$  explicitly defined by 
$$
(x,m_1,m_2)\bullet (x',m_1',m_2')=\big(xx',x\cdot m_1'+x'\cdot m_1+m_1m_1',x\cdot m_2'+x'\cdot m_2+m_2m_2'\big).$$
It can be verified that this product verifies the Jordan identity and makes of  
$J(M_1,M_2)$ a Jordan algebra.  Its unity  is $(e,0,0)$ and it  contains in a natural way $J,J_1$ and $J_2$ as Jordan subalgebras. \sk 

Let us remark that if $M_3$ is a Jordan $J$-module satisfying the same properties shared by  $M_1$ and $M_2$  and if $J_3$ stands for 
  $J\oplus M_3$ with the Jordan product induced by the $J$-module structure on $M_3$, then the three Jordan algebras $J_1(M_2,M_3)$, $J_2(M_1,M_3)$ and $J_3(M_1,M_2)$ identify naturally.  We will designate the corresponding algebra by $J(M_1,M_2,M_3)$.  Of course, this construction generalizes and can be iterated as many times as wanted. 
  \mk

 Now assume that $J_1$ and $J_2$ have the same rank $r$ 
and that there exists a norm (in the sense of \cite[II.\S 5]{bk}) $N\in {\rm Sym}^r(J^*)$  on $J$ such that $N(x)=N_1(x,m_1)=N_2(x,m_2)$ for every $(x,m_1,m_2)\in J\times M_1\times M_2$, where $N_i\in {\rm Sym}^r(J_i^*)$ 
is  the generic norm  of $J_i$ for $i=1,2$. 
For every $x\in J$ and $m_i\in M_i$ ($i=1,2$), let $m_i^{\# x}$ be the projection onto $M_i$ of $(x,m_i)^\#$. For instance, when $r=3$, if $T=dN_e\in J^*$ is the trace associated to $N$, then  for $i=1,2$,  one has:  
$$
m_i^{\# x}=2 x\cdot m_i+m_i^{2}-T(x)   m_i .
$$

 Then one   proves easily the following result:

\begin{prop}
\label{P:gluingJ}
Under the assumptions above, the  algebra $J(M_1,M_2)$ has rank $r$ and the adjoint is given by 
$$
(x,m_1,m_2)^\#=\big(x^\#, m_1^{\# x}, m_2^{\# \varphi(x)}\big). 
$$
\end{prop}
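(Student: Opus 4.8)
The plan is to identify $J(M_1,M_2)$ with a fibre product of algebras and to read off both its generic norm and its adjoint by transporting the corresponding objects of $J$, $J_1$ and $J_2$ through the canonical projections. The starting observation is that the product $\bullet$ never mixes the $M_1$- and $M_2$-slots: the $M_1$-component of $(x,m_1,m_2)\bullet(x',m_1',m_2')$ depends only on the $J$- and $M_1$-components of the two factors, and symmetrically for $M_2$. Hence the three linear maps
\[
\pi_0\colon (x,m_1,m_2)\mapsto x,\qquad \pi_{01}\colon (x,m_1,m_2)\mapsto (x,m_1),\qquad \pi_{02}\colon (x,m_1,m_2)\mapsto (x,m_2)
\]
are unital algebra homomorphisms onto $J$, $J_1$ and $J_2$, so that $J(M_1,M_2)$ is exactly the fibre product $J_1\times_J J_2$. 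First I would record this, so that every polynomial identity satisfied by a generic element $X=(x,m_1,m_2)$ can be tested slot by slot through $\pi_0,\pi_{01},\pi_{02}$.

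Next I would pin down the generic characteristic polynomial of $X$. The crux is that the hypothesis $N_i=N\circ\pi_0$ is an identity of polynomial functions on $J_i$, and therefore remains valid after the substitution $(x,m_i)\mapsto(\lambda e-x,-m_i)$: the characteristic polynomial of $(x,m_i)$ in $J_i$ is
\[
N_i\big(\lambda\,(e,0)-(x,m_i)\big)=N(\lambda e-x)=:\chi_x(\lambda),
\]
which is independent of $m_i$ and coincides with the characteristic polynomial of $x$ in $J$. Consequently $\chi_x\big(\pi_{01}(X)\big)=0$ and $\chi_x\big(\pi_{02}(X)\big)=0$ by Cayley--Hamilton in $J_1$ and $J_2$; since $\pi_{01}$ and $\pi_{02}$ are homomorphisms whose kernels intersect only in $0$, this forces $\chi_x(X)=0$ in $J(M_1,M_2)$. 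Because $\pi_0(X^k)=x^k$ and $e,x,\dots,x^{r-1}$ are independent for generic $x\in J$, the elements $e,X,\dots,X^{r-1}$ are independent as well, so $\chi_x$ is the generic minimal polynomial of $X$; hence the rank of $J(M_1,M_2)$ equals $r$ and its generic norm is $N(X)=N(x)$. In particular the subdominant coefficients $\sigma_1,\dots,\sigma_{r-1}$ of the characteristic polynomial depend only on $x$ and take the same values in $J$, $J_1$, $J_2$ and $J(M_1,M_2)$.

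The adjoint now follows formally. By Cayley--Hamilton the adjoint is the universal polynomial
\[
X^{\#}=X^{r-1}-\sigma_1(x)\,X^{r-2}+\cdots+(-1)^{r-1}\sigma_{r-1}(x)\,e,
\]
with coefficients that, by the previous step, depend only on $x$. Applying the homomorphism $\pi_{01}$ and using $\sigma_j(x)=\sigma_j(x,m_1)$, this expression becomes the adjoint $(x,m_1)^{\#}$ computed in $J_1$; likewise $\pi_{02}(X^{\#})=(x,m_2)^{\#}$ and $\pi_0(X^{\#})=x^{\#}$. Reading off the three components and recalling that $m_i^{\#x}$ is by definition the $M_i$-projection of $(x,m_i)^{\#}$, I obtain the asserted formula $(x,m_1,m_2)^{\#}=\big(x^{\#},m_1^{\#x},m_2^{\#x}\big)$. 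For $r=3$ one may double-check by direct expansion: $(x,m_i)^{\#}=(x,m_i)^2-T(x)(x,m_i)+S(x)(e,0)$, whose $M_i$-part is $2\,x\cdot m_i+m_i^2-T(x)\,m_i$, exactly the stated $m_i^{\#x}$.

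The main obstacle is the middle step: upgrading the hypothesis on the norms, which concerns a single coefficient, to the statement that the \emph{entire} characteristic polynomial of $X$ equals $\chi_x$. The clean way around it is the observation that $N_i=N\circ\pi_0$ is a polynomial identity, so it may be evaluated at $\lambda e-x$ and delivers all the coefficients at once; one must also argue carefully that the rank is exactly $r$ rather than smaller, which is precisely what the independence of $e,X,\dots,X^{r-1}$, tested through $\pi_0$, guarantees. If in the general construction the second module is glued through a twist by an automorphism $\varphi$ of $J$, the only modification is to replace $\pi_{02}$ by the correspondingly twisted homomorphism, producing $m_2^{\#\varphi(x)}$ in the last slot by the identical argument.
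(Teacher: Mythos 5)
The paper gives no proof of this proposition at all (it is introduced with ``one proves easily the following result''), so there is no argument of the authors to compare yours against; judged on its own terms, your strategy is essentially the right formalization: the projections $\pi_0,\pi_{01},\pi_{02}$ are indeed unital homomorphisms identifying $J(M_1,M_2)$ with $J_1\times_J J_2$, the norm hypothesis transported to $\lambda e-(x,m_i)$ does show that the generic characteristic polynomial of $(x,m_i)$ in $J_i$ is $\chi_x(\lambda)=N(\lambda e-x)$, Cayley--Hamilton in $J_1$ and $J_2$ together with $\ker\pi_{01}\cap\ker\pi_{02}=0$ gives $\chi_x(X)=0$, and reading off slots yields the stated formula, including the correct $r=3$ check $m_i^{\# x}=2\,x\cdot m_i+m_i^2-T(x)m_i$.

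There is, however, one step that fails as written: for the rank lower bound you assert that ``$e,x,\dots,x^{r-1}$ are independent for generic $x\in J$'' and test the independence of $e,X,\dots,X^{r-1}$ through $\pi_0$. Nothing in the hypotheses forces $J$ itself to have rank $r$: the assumptions are only that $J_1,J_2$ have rank $r$ and that $J$ carries a degree $r$ norm $N$ compatible with the generic norms $N_1,N_2$. This is not a hypothetical worry --- it fails in the paper's own applications of the proposition: in the family $f_n$ built from $\mathbb{C}[\varepsilon]/(\varepsilon^3)$, one glues radicial modules along $J=\mathbb{C}$, so $r=3$ while $J$ has rank $1$, and $e,x,x^2$ are certainly dependent in $\mathbb{C}$. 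The repair stays inside your framework: test independence through $\pi_{01}$ instead of $\pi_0$; since $J_1$ has rank $r$ by hypothesis, $e_1,(x,m_1),\dots,(x,m_1)^{r-1}$ are independent for generic $(x,m_1)$, and a linear map cannot send a dependent family to an independent one, so $e,X,\dots,X^{r-1}$ are independent and $\operatorname{rk} J(M_1,M_2)=r$. The same caveat dictates how the first slot must be read: $\pi_0(X^\#)=x^{r-1}-\sigma_1(x)x^{r-2}+\cdots+(-1)^{r-1}\sigma_{r-1}(x)e$ is the adjoint of $x$ relative to the norm $N$ (Braun--Koecher), not the intrinsic adjoint of the possibly lower-rank algebra $J$ --- in the example above it is $a\mapsto a^2$, whereas the intrinsic adjoint of $\mathbb{C}$ is constant --- and it is with this reading that the symbol $x^\#$ in the statement, and your computation of it, are correct.
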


Let us now explain how this result can be used in practice to construct families of Jordan algebras. 
For any automorphism $\varphi:J\rightarrow J$ of Jordan algebras, 
 let $M_2^\varphi$ be the (non-unital) Jordan algebra $M_2$ endowed with the new bilinear map $ J\times M_2\rightarrow M_2\, ,  (x,m_2)\mapsto x \cdot _{\varphi}m_2= \varphi(x)\cdot m_2$.   This $\mathbb C$-bilinear map makes of $M_2^\varphi$ a Jordan $J$-module, as 
 one verifies without difficulty.  \sk 
 
Assume that  $J$ is semi-simple  and that  $M_1,\ldots,M_k$ are radicial Jordan modules (meaning that ${\rm Rad}(J_i)=M_i$ for every $i$ if $J_i$ stands for the Jordan algebra structure on $J\oplus M_i$ induced by the $J$-module structure of $M_i$).  If $\varphi_1,\ldots,\varphi_m$ are automorphism of $J$, it follows from the preceding considerations that 
$$
J(M_1^{\varphi_1}, \ldots,M_m^{\varphi_m})
$$
is a Jordan algebra, of the same rank that $J$.  Note that $
J(M_1^{\varphi_1}, \ldots,M_m^{\varphi_m})
$ 	and $
J(M_1, M_2^{\varphi_2\circ {\varphi_1}^{-1}}, \ldots,M_m^{\varphi_m\circ {\varphi_1}^{-1}}) $ are clearly isomorphic, hence one can restrict to the case when $\varphi_1={\rm Id}$ without any real loss of generality.\sk

Let us consider the preceding construction in the case when $r=3$,   the one we are more interested in. 
Proposition \ref{P:gluingJ} provides  a way of constructing  new rank 3 Jordan algebras starting from  finitely many rank 3 Jordan algebras 
$J_1,\ldots,J_m$  with isomorphic semi-simple parts, yielding  new quadro-quadric Cremona transformations arising from quadro-quadric Cremona transformations  with the same semi-simple part (see \cite{PR2} for definitions).  Let us  make explicit this construction in terms of Cremona transformations, which is very simple but quite surprisingly  unknown.
 \smallskip

For $i=1,\ldots,m$, let $F_i$ be a quadratic  affine lift to $V_i$ of   a quadro-quadric Cremona transformation 
$f_i:\mathbb P(V_i)\dashrightarrow \mathbb P(V_i)$   (with $\dim V_i\geq 3$).  Assume that the $F_i$'s have the same semi-simple part (see \cite[\S 5.1]{PR2}). In particular there exists  complex vector spaces $R_1,\ldots,R_m$ and $V$  such   $V_i=V\oplus R_i$  for every $i=1,\ldots,m$. Moreover there is a semi-simple quadro-quadric map $F_{\ss}\in {\rm Sym}^2(V^*)\otimes V$  (see \cite[Table 2]{PR2}) and bilinear applications $\mathcal F_i:V_i\times R_i\rightarrow R_i$  
such that for every $i=1,\ldots,m$, the map $F_i$ can be written  as
\begin{equation}
\label{E:Fi}
F_i\big({\boldsymbol x},{\boldsymbol y}_{i}\big)=\big(F_{\ss}({\boldsymbol x}),\mathcal F_i\big({\boldsymbol x},{\boldsymbol y}_{i}\big)\big)
\end{equation}
in some systems of linear coordinates ${\boldsymbol x}=(x_1,\ldots,x_n)$ on $V$  and ${\boldsymbol y}_{i}=(y_{i1},\ldots,y_{ir_i})$ on  $R_i$  (where $r_i=\dim R_i$ for every $i$)\footnote{To simplify the formula, we have abused the notation writing 
 $\mathcal F_i({\boldsymbol x},{\boldsymbol y}_{i})$ instead of $\mathcal F_i\big(({\boldsymbol x},{\boldsymbol y}_{i}), {\boldsymbol y}_{i}\big)$ in 
(\ref{E:Fi}).}.  To simplify, we assume that $F_{\ss}$ is involutorial  so that there exists a cubic form $N(x)$ such that $F_{\ss}(F_{\ss}(x))=N(x)x$ for every $x\in V$.  For $k=2,\ldots,m$, let $\varphi_k$ be a linear automorphism of $V$ 
such that $\varphi_k\circ F_{\ss}(x)=F_{\ss}\circ \varphi_k(x)$ and $N(\varphi_k({x}))=N(x)$ for every $x\in V$.  Then setting 
$R=R_1\oplus \cdots\oplus  R_m$, 
one defines the `{\it gluing of $f_1,\ldots,f_m$ along their semi-simple part by mean of $\varphi_2,\ldots,\varphi_m$}' as the rational map
$$f_1\boldsymbol{\cdot} f_2^{\varphi_2}\boldsymbol{\cdot} \cdots   \boldsymbol{\cdot} {f}_{m-1}^{\varphi_{m-1}}\boldsymbol{\cdot}  f_m^{\varphi_m} : \mathbb P(V\oplus R)\dashrightarrow \mathbb P(V\oplus R)$$
which is the projectivization of the affine quadratic map given  by the following formula  
$$
F_1\boldsymbol{\cdot} F_2^{\varphi_2}\boldsymbol{\cdot} \cdots   \boldsymbol{\cdot}F_m^{\varphi_m}
({\boldsymbol x},{\boldsymbol y})=\Big( 
F_{ss}({\boldsymbol x}), \mathcal F_1\big({\boldsymbol x},{\boldsymbol y}_{1}\big), \mathcal F_2\big(\varphi_{2}({\boldsymbol x}),{\boldsymbol y}_{2}\big),\ldots,
\mathcal F_m\big(\varphi_{m}({\boldsymbol x}),{\boldsymbol y}_{m}\big)
\Big)
$$
in the linear coordinates $({\boldsymbol x},{\boldsymbol y})=({\boldsymbol x},{\boldsymbol y}_{1},\ldots,{\boldsymbol y}_{m})$ on $V\oplus R$. Using  
Proposition \ref{P:gluingJ}, one can prove that  
$$
f_1\boldsymbol{\cdot} f_2^{\varphi_2}\boldsymbol{\cdot} \cdots   \boldsymbol{\cdot}f_m^{\varphi_m}
\in {\bf Bir}_{2,2}\big(\mathbb P(V\oplus R)\big). $$

   \subsection{Some new families of quadro-quadric Cremona transformations}
   We use now the method presented above to construct new explicit families of quadro-quadric Cremona transformations in arbitrary dimensions.
    
 \subsubsection{A family of very simple new elements of ${\bf Bir}_{2,2}(\mathbb P^{n})$ ($n\geq 2$)}
         \label{S:verysimplefamily}
Here we construct new quadro-quadric Cremona transformations that are particularly simple. 
For $i=1,2,3$, let $A_i$ be a complex vector space of finite dimension $\alpha_ i\geq 0$ and let $A$ be their direct sum: $A=A_1\oplus A_2\oplus A_3$.  One sets  $\alpha=\dim A=\alpha_1+\alpha_2+\alpha_3\geq 0$ and 
$\underline{\alpha}=(\alpha_1,\alpha_2,\alpha_3)$.  
Choosing some linear coordinates $ {\boldsymbol x}=(x_1,x_2,x_3)$ on $\mathbb C^3$ and 
${\boldsymbol a}_i=(a_{i1},\ldots,a_{i\alpha_i})$ on $A_i$ for $i=1,2,3$, one defines  a quadratic map  
$F^{\underline{\alpha}}$ on $  \mathbb C^{3}\oplus A=\mathbb C^{3+\alpha}$ by setting 
\begin{equation}
\label{E:Falpha}
F^{\underline{\alpha}}({\boldsymbol x},{\boldsymbol a}_1,{\boldsymbol a}_2,{\boldsymbol a}_3)=\Big(x_2x_3,x_1x_3,x_1x_2,x_1{\boldsymbol a}_1,x_2{\boldsymbol a}_2,x_3{\boldsymbol a}_3\Big)
\end{equation}
for every $({\boldsymbol x},{\boldsymbol a})\in \mathbb C^3\oplus A$, with of course  $x_i{\boldsymbol a}_i=(x_ia_{i1},\ldots,x_ia_{i\alpha_i})$ for $i=1,2,3$. 

One verifies immediatly that $F^{\underline{\alpha}}(F^{\underline{\alpha}}({\boldsymbol x},{\boldsymbol a}))=x_1x_2x_3({\boldsymbol x},{\boldsymbol a})$ for every $({\boldsymbol x},{\boldsymbol a})\in \mathbb C^3\oplus A$.  This implies that the projectivization of $F^{\underline{\alpha}}$ is an involutive quadro-quadric Cremona transformation:  for every $\underline{\alpha}\in \mathbb N^3$, one has 
$$
f^{\underline{\alpha}}=[F^{\underline{\alpha}}]\in  {\bf Bir}_{2,2}(\mathbb P^{2+\alpha}). 
$$

The simplest case  when ${\underline{\alpha}}=(0,0,0)$ is well known: 
$f^{(0,0,0)}$ is nothing but the standard Cremona involution of $\mathbb P^2$.  
The quadro-quadric maps $f^{(m-3,1,0)}$ for $m\geq 3$ are the ones considered  recently by Bruno and Verra and recalled in $({\rm A})$ at the beginning  of Section \ref{S:newQQCremona} above. 
Therefore although some particular cases have been considered recently, 
  it seems that the general quadratic involution $f^{\underline{\alpha}}$ considered above has been overlooked notwithstanding  its definition in coordinates \eqref{E:Falpha} is certainly one of the simplest that could be imagined.

    \subsubsection{A family of elements of ${\bf Bir}_{2,2}(\mathbb P^{2n})$ ($n\geq 1$) constructed from the algebra $\mathbb C[\varepsilon]/(\varepsilon^3)$}
  Let us  consider the  rank 3 associative algebra $\mathbb C[\varepsilon]/(\varepsilon^3)$ as a Jordan algebra, whose adjoint
  is easily seen to be $$(a,b,c)^\#=(a^2,-ab,b^2-ac)=:F(a,b,c)$$ in the system of linear coordinates associated to the basis $(1,\varepsilon, \varepsilon^2)$. 
  
 For  $n\geq 1$, let $  F_n: \mathbb C^{2n+1}\rightarrow \mathbb C^{2n+1}$   be the quadratic map defined by 
  $$
  F_n(a,b_1,c_1,\ldots,b_n,c_n)=(a^2, -ab_1,b_1^2-ac_1, \ldots, 
  -ab_n,b_n^2-ac_n)
  $$
   in some coordinates $(a,b_1,c_1,\ldots,b_n,c_n)$ on $\mathbb C^{2n+1}$.  The map $F_n$ is `involutorial' in the sense that
   $$F_n\big(F_n(a,b_1,\ldots,c_n)\big)
   =a^3 (a,b_1,\ldots,c_n    )$$
   for every $(a,b_1,\ldots,c_n    )\in \mathbb C^{2n+1}$. 
Then $f_n=[F_n]\in {\bf Bir}_{2,2}(\mathbb P^{2n})$ for every $n\geq 1$.

    \subsubsection{Construction of continuous families of quadro-quadric Cremona transformations}
    \label{S:ContinuousFamily}
All the families of quadro-quadric Cremona transformations presented in the two previous subsections are countable. This  a consequence of  the fact that the semi-simple parts of the afore-mentioned examples have {\it small} automorphism groups.

In fact, using the general construction of Section \ref{S:newQQCremona},  it is not difficult to construct  a continuous family
of Cremona transformations of bidegree $(2,2)$ by starting with a rank 3 Jordan algebra   $J$ 
whose semi-simple part $J_{\ss}$ is a rank 3 simple Jordan algebra.  Indeed, in this case, the automorphism group ${\rm Aut}(J_{\ss})$ of $J_{\ss}$  is a simple Lie group of positive dimension. Then  if $J$ is an extension of $J_{\ss}$ by a non-trivial  radical  $R$ of dimension $r>0$, we can construct the  family 
$$
\big\{   J_{\ss}(R, R^\varphi)  \, \big|  \, \varphi\in {\rm Aut}(J_{\ss}) \, \big\}.
$$
of rank 3 Jordan algebras of dimension $\dim (J_{\ss})+2r$. 
By considering the associated adjoint maps of these algebras,  we get  a family of 
quadro-quadric Cremona transformations of  $\mathbb P^{\dim (J_{\ss})+2r-1}$  
parametrized by the simple Lie group ${\rm Aut}(J_{\ss})$.

 \subsection{From quadro-quadric Cremona transformations to cubo-cubic ones}
\label{S:From22to33}
By definition, 
a {\it cubo-cubic Cremona transformation} is a birational map of a projective space of bidegree $(3,3)$. Such maps have been studied  in low dimension (see \cite{hudson,pan} for instance) but except in dimension 2 and 3 no general classification or structure results are known for them.
 We indicate below two distinct general constructions of cubo-cubic Cremona maps  starting from quadro-quadric ones. 
\mk

The first one is based on Spampinato's remark recalled in Section \ref{Notation}.

\begin{prop}\label{Spamp} Let $f: \mathbb P^n \dashrightarrow \mathbb P^n,\, x\mapsto [f_0(x):\cdots: f_n(x)]$ be a Cremona transformation of bidegree $(2,2)$.  If $N(x)=0$ is a cubic equation cutting out the  secant scheme of 
 the base locus scheme  of $f$ (see \cite{PR2}), then    
  \begin{align*}
 \mathbb P^{n+1}& \dashrightarrow \mathbb P^{n+1} \\
[x:r] & \longmapsto \big[ r f_0(x):\cdots: r f_n(x) : N(x) \big]
\end{align*}
is  a Cremona transformation of bidegree $(3,3)$ of Spampinato type. 
\end{prop}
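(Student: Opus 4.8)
The plan is to identify the displayed map with the projectivized adjoint of the rank $4$ Jordan algebra $\widetilde J = J\times \mathbb C$, where $J$ is the rank $3$ Jordan algebra attached to $f$ by the $JC$-correspondence, and then to read off the bidegree and the ``Spampinato type'' claim directly from the general facts recalled in Section~\ref{Notation}.

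First I would apply the $JC$-correspondence of \cite{PR2}: as $f\in {\bf Bir}_{2,2}(\mathbb P^n)$, it is linearly equivalent to the projectivized adjoint $[\#]$ of a rank $3$ Jordan algebra $J$ with $\dim J=n+1$, unique up to isomorphism. Choosing affine lifts, there are $L_1,L_2\in GL(J)$ with $(f_0,\ldots,f_n)=L_1\circ \#_J\circ L_2$ up to a nonzero scalar, where $\#_J$ is the adjoint of $J$. The second ingredient is to pin down the cubic $N$: by the description in \cite{PR2} of the secant scheme of $\mathcal B_f$, this scheme is cut out by the generic norm $N_J$ of $J$ (the equation of the non-invertible elements of $J$); since the base locus, and hence its secant scheme, transforms covariantly under $L_2$, the hypothesis forces $N=N_J\circ L_2$ up to a nonzero scalar.

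Next I would use the adjoint formula for a direct product recalled in Section~\ref{Notation}: for $(x,r)\in \widetilde J=J\times\mathbb C$ one has $(x,r)^\#=(r\,x^\#,N_J(x))$. Substituting the lifts, the affine map $(x,r)\mapsto \big(r f_0(x),\ldots,r f_n(x),N(x)\big)$ factors as $\widetilde L_1\circ \#_{\widetilde J}\circ \widetilde L_2$ with $\widetilde L_i=L_i\oplus \mathrm{id}_{\mathbb C}$, so the map in the statement is linearly equivalent to the projectivized adjoint of $\widetilde J$. Since $\widetilde J=J\times \mathbb C$ is a direct product of the power-associative (indeed Jordan) rank $3$ algebra $J$ with $\mathbb C$, this map is of Spampinato type by definition. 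Finally $\rk(\widetilde J)=\rk(J)+1=4$, so the general identity $\big((x,r)^\#\big)^\#=N_{\widetilde J}(x,r)^{\,4-2}(x,r)$ shows the adjoint is a birational involution of bidegree $(4-1,4-1)=(3,3)$; a short coprimality check (the four forms $rf_0,\ldots,rf_n,N$ have no common factor, as $N$ does not involve $r$ while the $f_i$ are coprime) confirms that the bidegree is genuinely $(3,3)$.

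The step I expect to be the main obstacle is the identification $N=N_J\circ L_2$ up to scalar, that is, matching the geometrically defined cubic cutting out the secant scheme with the algebraic generic norm and checking its compatibility with the linear equivalence relating $f$ to $[\#_J]$; this is precisely where the input of \cite{PR2} on the secant scheme of $\mathcal B_f$ is used. Once this is secured, the remaining assertions follow by a direct reading of the adjoint formula for $J\times\mathbb C$.
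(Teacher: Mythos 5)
Your proposal is correct and follows essentially the same route as the paper: the paper states this proposition without a separate proof, indicating only that it is "based on Spampinato's remark recalled in Section \ref{Notation}", i.e.\ precisely the identification of the displayed map with the adjoint of $\widetilde J=J\times\mathbb C$ via the $JC$-correspondence, the formula $(x,r)^{\#}=(rx^{\#},N(x))$, and the rank/bidegree count that you carry out. Your write-up merely makes this implicit argument explicit (including the coprimality check and the matching of $N$ with the generic norm up to the linear equivalence), which is exactly the intended justification.
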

\smallskip

Of course, via the previous construction one can produce  from a  given   $f\in {\bf Bir}_{2,2}(\p^n)$, a  Cremona transformation on $\mathbb P^{n+k}$ of bidegree $(2+k,2+k)$, for any $k\geq 1$.   
For example from this point of view,  the {\it standard  involution} of $\mathbb P^{n-1}$ that is the birational map
$$
[x_1:x_2:\ldots: x_n]\longmapsto \big[x_2x_3\ldots x_n : x_1x_3\ldots x_n:\ldots:x_1x_2\ldots x_{n-1}  \big]
$$
of bidegree $(n-1,n-1)$, which is the projectivization of the adjoint of the associative and commutative $n$-dimensional unital algebra $\mathbb C\times\mathbb C\times\dots\times\mathbb C$, is obtained from the {\it ordinary quadratic} transformation of $\p^2$
$$
[x_1:x_2: x_3]\longmapsto \big[x_2x_3 : x_1x_3:x_1x_2  \big]
$$
via Spampinato construction, that is via direct product.
\smallskip

The second construction of a cubo-cubic Cremona transformation from a given $f\in {\bf Bir}_{2,2}(\p^n)$ is also classical but less elementary and its generalization to higher degree is not clear at all. It is again based on an algebraic construction\footnote{  \label{FootNote}
Let $J$ be a rank 3 Jordan algebra with trace $T(x)$ and  cubic norm $N(x)$. By \cite[\S \!8.v]{allison}, the 
space  of {\it Zorn matrices with coefficients in $J$} defined by 
$$
Z_2(J)=\bigg\{  
\begin{pmatrix}
a & x \\ y & b
\end{pmatrix}  \Big\lvert \begin{tabular}{l}
$a,b\in \mathbb C$\\
$x,y\in J$
\end{tabular}
\bigg\}\, , 
$$ 
together with the  product $\bullet$ and the involution $M\mapsto \overline{M}$ given respectively by (where  $x\#y=(x+y)^\#-x^\#-y^\#$ for every $x,y\in J$) 
$$
\begin{pmatrix}
a & x \\ y & b
\end{pmatrix}\bullet  \begin{pmatrix}
a' & x' \\ y' & b'
\end{pmatrix}=
\begin{pmatrix}
aa'+T(x,y') & ax'+b'x +y\#y' \\ a'y+by' +x\#x'& bb'+T(x',y)
\end{pmatrix}\qquad \mbox{and}\qquad 
\overline{\begin{pmatrix}
a & x \\ y & b
\end{pmatrix}}=
\begin{pmatrix}
b & x \\ y & a
\end{pmatrix}
$$ 
is a {\it structurable algebra}, meaning that the triple product 
$[M,N,P]=(M\bullet \overline{N})\bullet P+(P\bullet \overline{N})\bullet M-(P\bullet \overline{M})\bullet N$ satisfies some particular algebraic  identities.  Moreover, the subspace $\{ M\in Z_2(J)\lvert \overline{M}=-M   \}$ of skew-elements is 1-dimensional and spanned by the diagonal matrix , noted by 
$\boldsymbol{\sigma}$, with scalar diagonal coefficients 1 and -1.   Then it can be proved (see formula (1.5) in \cite{allison2}) that the projectivization of the map 
$ M\mapsto \boldsymbol{\sigma} \bullet [M,\boldsymbol{\sigma}\bullet M, M]
$
is an involutorial  cubo-cubic Cremona transformation of $\mathbb PZ_2(J)$.}, but we shall describe it geometrically.  The $XJC$-Correspondence assures that  we can associated to $f=[F]\in {\bf Bir}_{2,2}(\p^{n-1})$ an irreducible  projective variety $X_f\subset\mathbb P^{2n+1}$, of dimension $n$, defined as the closure of the image of the affine parametrization 
 $\boldsymbol{x}\mapsto [1:\boldsymbol{x}:F(\boldsymbol{x}):N(\boldsymbol{x})]$,  notation as in Proposition \ref{Spamp}, and which is  3-covered by twisted cubic curves according to Proposition 3.3 of \cite{PR2}. 

Moreover, in  \cite[Corollary 5.4]{PR2}) we remarked that  $X_f$ is an OADP-variety: a general point $p\in \mathbb P^{2n+1}$ belongs to a unique secant line to $X_f$,  denoted by $\ell_p$.  For such a point $p$, there exists a unique unordered pair $\{  a_p,b_p \}$ of two distinct points of $X_f$ such that $\ell_p= \langle  a_p,b_p\rangle$. Thus  one can define  $ p'$ as the projective harmonic conjugate  of the triple $(a_p,b_p,p)$ on the projective line $\ell_p$: $ p'$ is the unique point on $\ell_p$ such that  ${\rm Cr}(a_p,b_p; p,p')=-1$\footnote{
  Here ${\rm Cr}(\cdot , \cdot ; \cdot ,\cdot)$ stands for the cross-ratio of two pairs of  points on the projective line $\ell_p$.}.  
  \begin{prop}
For any $f\in {\bf Bir}_{2,2}(\p^{n-1})$, $n\geq 3$, the map 
\begin{align*} 
\Phi_f: \mathbb P^{2n+1} & \dashrightarrow \mathbb P^{2n+1}\\
p & \longmapsto p'
\end{align*} defined  above 
is an involutorial  cubo-cubic Cremona transformation of $\mathbb P^{2n+1}$.
  \end{prop}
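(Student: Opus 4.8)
The plan is to treat separately the two qualitatively different assertions: that $\Phi_f$ is a birational involution, which is essentially formal and comes from the geometry of harmonic conjugation on the secant lines of an OADP variety, and that its bidegree is exactly $(3,3)$, which is where the structurable algebra of the footnote enters.

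First I would check that $\Phi_f$ is a well-defined rational map and an involution. By \cite[Corollary 5.4]{PR2} the variety $X_f\subset\mathbb P^{2n+1}$ is an OADP variety, so the assignment sending a general point $p$ to its unique secant line $\ell_p$, and hence to the unordered pair $\{a_p,b_p\}=\ell_p\cap X_f$, is an algebraic correspondence of degree one. Since the harmonic conjugate $p'$ is a symmetric function of this unordered pair — one has $\operatorname{Cr}(a_p,b_p;p,p')=\operatorname{Cr}(b_p,a_p;p,p')=-1$ — the point $p'$ depends rationally on $p$ with no monodromy, so $\Phi_f$ is a genuine rational self-map. Next, because $p'$ lies on $\ell_p$, and $\ell_p$ is by uniqueness also the secant line through $p'$, we get $\{a_{p'},b_{p'}\}=\{a_p,b_p\}$; as harmonic conjugation on $\ell_p$ is an involution, applying $\Phi_f$ twice returns $p$. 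Thus $\Phi_f\circ\Phi_f=\operatorname{id}$ on a dense open set, which forces $\Phi_f$ to be dominant and therefore a birational involution.

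It remains to compute the bidegree, and here I would use the algebraic model of the footnote. Under the $XJC$-correspondence, $\mathbb P^{2n+1}$ is identified with $\mathbb PZ_2(J)$ for $J$ the rank $3$ Jordan algebra attached to $f$, and the map $g:M\mapsto\boldsymbol{\sigma}\bullet[M,\boldsymbol{\sigma}\bullet M,M]$ is, by formula $(1.5)$ of \cite{allison2}, an involutorial cubo-cubic Cremona transformation of $\mathbb PZ_2(J)$ — it is patently given by cubic forms, being trilinear in $M$. The proposition therefore reduces to the identification $\Phi_f=g$, which I would establish by comparing the two involutions on a general secant line. Under the identification, $X_f$ is the locus of rank-one elements of $Z_2(J)$ and coincides with the base locus of $g$. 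Fixing two general points $a,b\in X_f$ and the secant $\ell=\langle a,b\rangle$, the key step is to show that $g$ stabilizes $\ell$ and that $g|_\ell$ is the unique nontrivial involution of $\ell\cong\mathbb P^1$ fixing $a$ and $b$: stabilization follows from a rank/closure computation showing the triple product keeps $\operatorname{span}(a,b)$ inside itself, and the fixed points are pinned down by the fact that the cubics defining $g$ vanish on $X_f\supset\{a,b\}$, so that $a,b$ are the base points of $g$ on $\ell$ and, after removing the common factor they cut out, the fixed points of the residual involution. Since a nontrivial involution of $\mathbb P^1$ is determined by its pair of fixed points, $g|_\ell$ must be harmonic conjugation with respect to $\{a,b\}$, i.e. $g|_\ell=\Phi_f|_\ell$; letting $\ell$ range over the general secants, which sweep out a dense subset, gives $g=\Phi_f$.

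I expect the main obstacle to be exactly this last matching step: verifying inside the structurable algebra $Z_2(J)$ that the restriction of $M\mapsto\boldsymbol{\sigma}\bullet[M,\boldsymbol{\sigma}\bullet M,M]$ to the plane spanned by two rank-one elements is harmonic conjugation. This is a multilinear computation with the triple product on rank-one elements, and care is needed to confirm that $g$ neither contracts $\ell$ nor acts with the wrong fixed points; everything else — rationality, the involution identity, and the bidegree of the algebraic model — is either formal or supplied by \cite{allison2}.
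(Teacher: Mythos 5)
Your first paragraph is fine and is exactly the paper's argument for the involution statement: uniqueness of the secant line through a general point of $\ell_p$ plus elementary properties of the cross-ratio give $\Phi_f\circ\Phi_f={\rm Id}$, hence birationality.

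The bidegree part, however, has a genuine gap, and it sits precisely where you yourself locate ``the main obstacle''. Your plan is to identify $\Phi_f$ with the structurable-algebra map $g:M\mapsto\boldsymbol{\sigma}\bullet[M,\boldsymbol{\sigma}\bullet M,M]$ by showing that $g$ stabilizes a general secant line $\ell=\langle a,b\rangle$ and restricts there to the nontrivial involution fixing $a$ and $b$. But the only justification you offer for the fixed points --- that $a,b$ are base points of the cubics defining $g$, hence become fixed points of the residual automorphism once the common factor is removed --- is false as an implication: on $\mathbb P^1$ with $a=[1:0]$, $b=[0:1]$, the pair of cubics $(st^2,\,s^2t)$ vanishes at both $a$ and $b$, yet the residual map is $[s:t]\mapsto[t:s]$, an involution that \emph{swaps} $a$ and $b$ and fixes $[1:\pm 1]$. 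So nothing short of the actual multilinear computation --- expanding $g(sa+tb)$ by trilinearity, using that $g$ kills the cone over $X_f$, and proving that the coefficient of $s^2t$ is proportional to $a$ and that of $st^2$ to $b$ with opposite constants, so that $g(sa+tb)\propto st(sa-tb)$ --- will close the argument, and that computation is exactly what you leave unproved. A second, inherited weakness: the external input you invoke, formula (1.5) of \cite{allison2} giving that $g$ is an involutorial cubo-cubic Cremona map of $\mathbb P Z_2(J)$, is established there for \emph{simple} structurable algebras, whereas $Z_2(J)$ is not simple once $J$ has a nontrivial radical; this is precisely the caveat the paper attaches to this route when it says the formulae of \cite{KaYa} ``concern a priori only the semi-simple case'' and must be verified in full generality. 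In short, your proposal reproduces the paper's first, algebraic strategy, including its unverified core. The paper's actual concluding argument is different and bypasses all of this: since $X_f$ is 3-covered by twisted cubics (Proposition 3.3 of \cite{PR2}), for general $p$ there is a twisted cubic $C_p\subset X_f$ through $a_p$ and $b_p$; by uniqueness of secant lines, $\Phi_f$ restricts on $\langle C_p\rangle\simeq\mathbb P^3$ to the harmonic-conjugation involution of the twisted cubic $C_p$, a classical Cremona involution of bidegree $(3,3)$, whence $\Phi_f$ is of the same type --- no structurable algebra and no identification step are needed.
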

\begin{proof} From the fact that for every $q\in \ell_p\setminus\{a_p,b_p\}$ the line $\ell_p$ is the unique secant lines to $X_f$ passing through $q$ and from  well-known properties of the cross-ratio it follows that  $\Phi_f\circ \Phi_f={\rm Id}$ as a rational map so that $\Phi_f$ is a birational involution of $\mathbb P^{2n+1}$.  
  
  To prove that $\Phi_f$ is cubo-cubic, one can relate it with the cubic map considered in  Footnote  \ref{FootNote} by verifying that the arguments and formulae of \cite{KaYa}, that concern a priori only the semi-simple case, are in fact valid in full generality.  
  
  A more geometrical approach is the following one: 
  for  $p\in \mathbb P^{2n+1}$ general, the points $a_p$ and $b_p$  are general points of $X_f$. In particular, since $X_f$ is 3-covered by twisted cubics, there exist  twisted cubics curves included in $X_f$ and passing through $a_p$ and $b_p$. Let $C_p$ be such a curve. 
   Since $C_p$ is a twisted cubic, it is an OADP variety in its span $\langle C_p\rangle \simeq \mathbb P^3$and $\Phi_f$ induces  a Cremona involution $\Phi_{C_p}: \langle C_p\rangle \dashrightarrow \langle C_p\rangle $. Since $\Phi_{C_p}$ is an involution of bidegree $(3,3)$ in $\mathbb P^3$, we deduce that $\Phi_f$ is of the same type, concluding the proof.
 \end{proof}

Since $X_f\subset\p^{2n+1}$ is an OADP-variety, the projection $\pi_x$ from a general tangent space $T_xX$ induces a birational map
$\pi_x:X_f\map\p^n$. Thus, for $x_1, x_2\in X_f$ general points, the birational map  $\pi_{x_1}^{-1}\circ\pi_{x_2}:\p^n\map\p^n$ is easily seen to be a Cremona transformation  of {\it Spampintato type} linear equivalent to the map considered in Proposition \ref{Spamp}, see also \cite[Section 3.2.2]{PR2}.

\section{\bf An algebraic approach to  the   description of  ${\bf Bir}_{2,2}(\mathbb P^{n})$ for $n$ small}

Via the  $JC$-correspondence one can classify quadro-quadric Cremona transformation on $\mathbb P^{n-1}$ by using the classification  of $n$-dimensional rank 3 Jordan algebras.  After the pioneering work by Albert and  Jacobson (among many others) in the 50's, we have
 now  a wide range of particularly powerful  algebraic tools  to study Jordan algebras. In what follows, we will present shortly some  notions and results to be applied  to determine  rank 3 Jordan algebras in low dimension. In particular we shall obtain the complete classification of rank 3 Jordan algebras of dimension 5, from which we will deduce the complete list of involutorial normal forms for quadro-quadric Cremona transformations of   $\mathbb P^4$.

\subsection{Some classical  notions and tools in the theory of Jordan algebras}
\label{S:notions-tools-Jordan-algebras}
The following material is very classical and it is  presented in most of the standard references  on Jordan algebras (as \cite{jacobson,schafer,bk,McCrimmon-book} for instance).

\subsubsection{Nilpotent and nil algebras} Let $A$ be a complex algebra, only assumed to be commutative, but not necessarily neither Jordan nor with a unity.  If $A_1,A_2$ are two subsets of $A$, one sets $A_1A_2= A_2A_1=\{    a_1a_2 \mid  a_i\in A_i, \, i=1,2\, \}$.   Then one defines inductively $A^k$ for $k\geq 1$ by setting $A^1=A$ and $A^k=\cup_{  0< p<k} A^p A^{k-p}$ for $ k>1$.  The algebra  $A$ is  said to be {\it $k$-nilpotent} if $A^k=0$ but $A^{k-1}\neq 0$ and  it is said {\it nilpotent} if it is $k$-nilpotent for a certain $k\geq 1$.  By definition, $A$ is a {\it nil algebra} if it is a {\it $k$-nil algebra} for a certain positive integer $k$, i.e. if $a^k=0$ for every $a\in A$. 
\medskip

 Now let $R$ be the nontrivial radical of a Jordan algebra or rank $\rho$. Then it can be proved that $r^\rho=0$ for every $r\in R$ so that  $R$ is a $k$-nil algebra for a certain $k\in \{2,\ldots,\rho\}$.  In fact, much more is true  since Albert proved that $R$ is nilpotent and not only nil, a result not used in the sequel.

 \subsubsection{Peirce decomposition} 
 By definition, an {\it idempotent } of a Jordan algebra $J$ is a nonzero  element $u\in J$ verifying $u^2=u$.  
 For instance, the unity $e$ of $J$ is idempotent.  Let $u$ be a fixed idempotent.   One proves that the multiplication $L_u$ by $u$ satisfies the relation $L_u(L_u-I_d)(2L_u-I_d)=0$,
 yielding  the direct sum decomposition $J= J_0(u)\oplus  J_1(u)\oplus   J_{\frac{1}{2}}(u)$, where $ J_{\lambda}(u)= {\rm Ker}( L_u-\lambda\, I_d)=\{   v\in  J\,  \mid \,  u\cdot v=\lambda\, v\}$ for $\lambda\in \{  0,1, {1}/{2}  \}$. 

Two elements $u_1,u_2\in  J$ are {\it orthogonal} if $u_1\cdot u_2=0$. In this case, if both are idempotents, then their sum  $u_1+u_2$  is idempotent too. An idempotent $u$ is {\it irreducible} when it cannot be written $u=u_1+u_2$ where $u_1,u_2$ are two orthogonal  idempotents. Since $ J$ has finite dimension, one verifies easily that any idempotent $u$ 
admits a {\it irreducible decomposition by orthogonal idempotents}, that is can be written as $u=u_1+\cdots +u_m$ where $u_1,\ldots, u_m$ are pairwise orthogonal irreducible idempotents (then  $m$ is well-defined and  depends only on $u$). 
\smallskip 

Let $e=e_1+\cdots +e_m$ be such a decomposition for the unity $e$  of $ J$.
Then for $i,j=1,\ldots,m$ distinct, one sets 
$
J_{ii}= J_{1}(e_i)=\{   x\in  J\,  \mid \,  e_i\cdot x=x \}$  and $ J_{ij}=J_{\frac{1}{2}}(e_i)\cap  J_{\frac{1}{2}}(e_j)=\{   x\in  J\,  \mid \,  e_i\cdot x=e_j\cdot x=\frac{1}{2}x \}$.

\begin{prop}
\label{P:Peirce}
There is  a direct decomposition 
\begin{equation}
\label{E:PeirceDecomposition}
J=\bigoplus_{1\leq i\leq j\leq m}  J_{ij}.
\end{equation}

Moreover, for every distinct $i,j,k,\ell\in \{ 1,\ldots,m\}$, one has 
\begin{align}
\label{E:Jii}
( J_{ii})^2\subset  J_{ii}\, , && J_{ii}\cdot  J_{ij}\subset  J_{ij}\, ,  &&& ( J_{ij})^2\subset
 J_{ii}\oplus J_{jj}\, , 
 &&  J_{ij}\cdot  J_{jk}\subset  J_{ik}    
\end{align}
and 
\begin{align}
\label{E:Jij}
 J_{ii}\cdot  J_{jj}= J_{ij}\cdot  J_{kk}= J_{ij}\cdot  J_{k\ell}=0.
\end{align}

Finally,   setting $R={\rm {\rm Rad}}( J)$, one has 
\begin{equation}
\label{E:RJii}
R=\Big(   \bigoplus_{i=1}^m {\rm Rad}( J_{ii})\Big) \oplus \Big(   R \cap \big(\bigoplus_{ i< j } J_{ij}\big) \Big).
\end{equation}
\end{prop}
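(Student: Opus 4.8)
The plan is to reduce everything to the one-idempotent Peirce theory already recorded in the text (the relation $L_u(L_u-I_d)(2L_u-I_d)=0$ and the splitting $J=J_0(u)\oplus J_{1/2}(u)\oplus J_1(u)$) together with one extra input: that the operators $L_{e_1},\dots,L_{e_m}$ of multiplication by pairwise orthogonal idempotents commute. To obtain the latter I would linearize the Jordan identity twice in its cubic variable to get the fully symmetric identity
\[
(xw)(zy)+(xz)(wy)+(wz)(xy)=x\big((wz)y\big)+w\big((xz)y\big)+z\big((xw)y\big),
\]
read as an identity of operators in $y$, and specialize $x=z=e_i$, $w=e_j$ with $e_ie_j=0$. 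Since $e_i^2=e_i$ and $e_ie_j=0$, all but two terms vanish and the identity collapses to $e_i(e_jy)=e_j(e_iy)$ for all $y$, i.e. $[L_{e_i},L_{e_j}]=0$. Each $L_{e_i}$ is diagonalizable with spectrum in $\{0,\tfrac12,1\}$ by the cubic relation, and being mutually commuting they are simultaneously diagonalizable. A simultaneous eigenvector with eigenvalues $(\lambda_1,\dots,\lambda_m)$ satisfies $\sum_i\lambda_i=1$ because $\sum_iL_{e_i}=L_e=\mathrm{Id}$; the only tuples in $\{0,\tfrac12,1\}^m$ summing to $1$ are a single $1$ (giving $J_{ii}$) or two entries equal to $\tfrac12$ (giving $J_{ij}$). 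This yields \eqref{E:PeirceDecomposition} and simultaneously shows that the defining descriptions of $J_{ii}$ and $J_{ij}$ coincide with these joint eigenspaces.

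For the multiplication table \eqref{E:Jii}--\eqref{E:Jij} I would first record the classical one-idempotent Peirce rules (again consequences of the linearized identity): for any idempotent $u$ one has $J_1(u)J_1(u)\subseteq J_1(u)$, $J_0(u)J_0(u)\subseteq J_0(u)$, $J_1(u)J_0(u)=0$, $J_\nu(u)J_{1/2}(u)\subseteq J_{1/2}(u)$ for $\nu\in\{0,1\}$, and $J_{1/2}(u)J_{1/2}(u)\subseteq J_1(u)\oplus J_0(u)$. Given a product $J_{ab}\cdot J_{cd}$, I determine for each idempotent $e_s$ separately the possible $L_{e_s}$-eigenvalues of the product from these rules, then intersect over all $s$ and discard every eigenvalue-tuple whose entries do not sum to $1$ (impossible in $J$ by the decomposition just established). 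For instance $J_{ij}J_{jk}\subseteq J_{ik}$ falls out because the only surviving tuple has $\tfrac12$ at the $i$-th and $k$-th places and $0$ elsewhere, while a vanishing statement such as $J_{ij}J_{k\ell}=0$ holds because the forced tuple would have four entries equal to $\tfrac12$ and hence sum to $2\neq 1$. Every line of \eqref{E:Jii}--\eqref{E:Jij} is verified in this uniform way; this step is routine bookkeeping rather than a genuine difficulty.

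Finally, for \eqref{E:RJii} I note that $R=\Rad(J)$ is an ideal, so $L_{e_i}R\subseteq R$ for all $i$; being invariant under the commuting diagonalizable operators $L_{e_i}$, the subspace $R$ is homogeneous for the Peirce grading, whence $R=\bigoplus_i(R\cap J_{ii})\oplus\bigoplus_{i<j}(R\cap J_{ij})$. It then remains to identify $R\cap J_{ii}$ with $\Rad(J_{ii})$, where $J_{ii}=J_1(e_i)$ is a subalgebra with unit $e_i$ by \eqref{E:Jii}. One inclusion is easy: $R\cap J_{ii}$ is an ideal of $J_{ii}$ consisting of nilpotent elements, hence a nil ideal, so $R\cap J_{ii}\subseteq\Rad(J_{ii})$. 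The reverse inclusion is where the real content lies and is what I expect to be the main obstacle. I would argue through the quotient $\pi:J\rightarrow J/R=J_{\ss}$: a nonzero idempotent cannot lie in the nil ideal $R$, so $\pi(e_i)\neq 0$, and a short chase of Peirce components (using that $\pi$ intertwines the $e_i$- and $\pi(e_i)$-actions) shows $\pi(J_{ii})=J_1(\pi(e_i))$. The key structural fact I invoke is that the Peirce-$1$ subalgebra $J_1(\pi(e_i))$ of the \emph{semisimple} algebra $J_{\ss}$ is itself semisimple; consequently $\pi(\Rad(J_{ii}))$, being a nil ideal of $\pi(J_{ii})=J_1(\pi(e_i))$, must vanish, i.e. $\Rad(J_{ii})\subseteq\ker\pi=R$. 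Combining the two inclusions gives $R\cap J_{ii}=\Rad(J_{ii})$ and hence \eqref{E:RJii}.
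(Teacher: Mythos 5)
Your proof is correct, but there is no argument in the paper to measure it against: Proposition~\ref{P:Peirce} is stated without proof, as part of material the authors explicitly describe as ``very classical'' with a pointer to the standard references \cite{jacobson,schafer,bk,McCrimmon-book}. What you have written is essentially the standard proof from those sources, and each step checks out. The specialization $x=z=e_i$, $w=e_j$ of the fully linearized Jordan identity does collapse to $[L_{e_i},L_{e_j}]=0$ (the terms involving $L_{e_ie_j}$ vanish by orthogonality); combined with the relation $L_u(L_u-I_d)(2L_u-I_d)=0$ recorded in the paper and with $\sum_i L_{e_i}=L_e=\mathrm{Id}$, simultaneous diagonalization forces the eigenvalue tuples to be a single $1$ or two $\tfrac12$'s, which gives \eqref{E:PeirceDecomposition}, identifies $J_{ii}$ and $J_{ij}$ with the joint eigenspaces, and reduces every line of \eqref{E:Jii}--\eqref{E:Jij} to the eigenvalue bookkeeping you describe (I verified each entry; the zero products fall out exactly as in your $J_{ij}\cdot J_{k\ell}$ example). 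For \eqref{E:RJii}, the gradedness of $R$ (an ideal invariant under commuting diagonalizable operators) and the inclusion $R\cap J_{ii}\subseteq\Rad(J_{ii})$ (a nil ideal of $J_{ii}$) are elementary, and your Peirce-component chase showing $\pi(J_{ii})=J_1(\pi(e_i))$ is sound. The one point to keep in view is that your key input for the reverse inclusion --- that the Peirce subalgebra $J_1(u)$ of a semisimple Jordan algebra is semisimple, equivalently the radical-inheritance statement $\Rad(J_1(u))=J_1(u)\cap\Rad(J)$ --- is a genuine classical theorem rather than a formal remark; it is proved in the cited references, so invoking it is legitimate, but your argument is self-contained only modulo that citation, exactly as you anticipated in calling it the crux.
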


At least theoretically,  the material presented in this  subsection should be  sufficient to  classify  rank 3 Jordan algebras of `reasonable dimension'. More rigorously, the notions just introduced above  reduce the classification of Jordan algebras to some problems in linear algebra, which although  simple from a conceptual point of view, 
 can become immediately quite complicated  as soon as  the  dimension increases.
\smallskip

Let us mention here the reference \cite{wesseler} where the author presents the  classification of complex Jordan algebras of dimension less or equal to $ 6$.   The first named author has written a text \cite{PIRIO} based on \cite{wesseler} and providing complete proofs of the classification of rank 3 Jordan algebra of dimension less or equal to 6.
Since the report \cite{wesseler} is not well-known, has not been published (and it will not be published), we decide  to present some details in the next subsection on the classification of   Jordan algebras in dimension 4 and 5,  following  very closely    Wesseler's approach and arguments.

\subsubsection{Classification of nil Jordan algebras of low dimension}
\label{S:Classif3nil}
A Jordan algebra with radical of codimension 1 is isomorphic to the unitalization  of its radical. 
Then the classification of  rank 3 Jordan algebras asks in particular to know all the 
nil Jordan algebras of nilindex less or equal to 3.  
We recall below the classification of  these latter in low dimension (for further references on this, the reader can consult \cite{GerstenhaberMyung,ElguetaSuazo,MaVaNou}).

\begin{prop}
A nontrivial nilalgebra   of nilindex at most  3  and of dimension $n$ less or equal to  $ 4$ is isomorphic to an algebra with basis $(v_1,\ldots,v_n)$ 
in the following table: 
\begin{table}[H]
  \centering
  \hspace{-0.4cm}
  \begin{tabular}{|c|c|l|} 
   \hline
 {\bf Dimension}   &     {\bf Algebra}   & \; \; {\bf Non trivial products }    \\ \hline\hline
  2 &  $\mathcal R^2$ &   \;\,   $v_1^2=v_2$  \\ \hline \hline
 3 &    $\mathcal R^3_1 $  &  \;\,   $v_1^2=v_3$ \\ \hline 
  3 &    $\mathcal R^3_2 $  &     \;\,     $v_1^2=v_2^2=v_3$\\ \hline \hline
    4 &    $\mathcal R^4_1 $  &   \;\,       $v_1^2=v_3$,  $v_1v_2=v_4$\\ \hline
       4 &    $\mathcal R^4_2 $  &   \;\,       $v_1^2=v_2^2=v_3$,  $v_1v_2=v_4$\\ \hline
   4 &    $\mathcal R^4_3 $  &   \;\,       $v_1^2=v_4$ \\ \hline 
    4 &    $\mathcal R^4_4 $  &  \;\,        $v_1^2=v_2^2=v_4$\\ \hline
    4 &    $\mathcal R^4_5 $  &  \;\,        $v_1^2=v_2^2=v_3^2=v_4$\\ \hline
   \end{tabular} \vspace{0.25cm}
\label{Table:R}
\caption{Classification  of nilalgebras of nilindex $\leq 3$ in low dimensions.  }
\end{table}
\end{prop}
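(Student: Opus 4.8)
The plan is to use that the hypothesis ``nilindex $\le 3$'' means precisely $x^3=x\cdot x^2=0$ for every $x$, and to convert the classification into one of a symmetric bilinear multiplication on a graded vector space. First I would linearize the identity $x^3=0$: polarizing $x\mapsto x\cdot x^2$ in a commutative algebra gives the trilinear identity
\[
x(yz)+y(zx)+z(xy)=0\qquad(x,y,z\in A),
\]
and its specialization $z=x$ yields $x^2 y+2\,x(xy)=0$. Since $A$ is a nil Jordan algebra it is power-associative, so the subalgebra generated by one element is $\langle x,x^2\rangle$ and has dimension at most $2$; moreover, by Albert's theorem recalled above, $A$ is nilpotent, so the chain $A\supsetneq A^2\supsetneq A^3\supsetneq\cdots$ has strict inclusions and reaches $0$. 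In particular, if $d:=\dim A/A^2$ equals $1$ then a Nakayama-type argument shows $A$ is generated by one element and $\dim A\le 2$, giving $\mathcal R^2$; hence for $\dim A=3,4$ one has $d\ge 2$, and the only possible numerical types $(\dim A/A^2,\dim A^2)$ are $(2,1)$, $(3,1)$ and $(2,2)$.

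The crux of the argument, and the step I expect to be the main obstacle (it genuinely uses low dimension and fails in general), is the

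\smallskip
\emph{Main Lemma: $A^3=0$.}
\smallskip

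\noindent Granting it, the multiplication factors through a well-defined symmetric bilinear map $\mu\colon V\times V\to W$, where $V=A/A^2$ and $W=A^2$: it is well defined because changing a representative by an element of $A^2$ alters the product by something in $A\cdot A^2=A^3=0$, and $W$ becomes an annihilator ideal with $A^2=\mu(V,V)$. Thus the isomorphism class of $A$ is exactly the orbit of $\mu$ under $GL(V)\times GL(W)$, and the classification reduces to that of surjective symmetric bilinear maps — equivalently, of linear systems of quadratic forms, one form per basis vector of $W$ — up to this group. To prove the Main Lemma, the cases with $\dim A^2=1$ follow from the direct computation that $v_i w=0$ and $w^2=0$ using $x^2y+2x(xy)=0$ (choosing $w=v_1^2$). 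The only delicate range is $\dim A=4$ with $\dim A^2=2$: if $A^3\ne 0$, nilpotency forces the profile $(\dim A/A^2,\dim A^2/A^3,\dim A^3)=(2,1,1)$ with $A^4=0$. Writing $v^2=q(v)\,w_1+r(v)$, where $r(v)\in A^3$ and $w_1$ is a fixed generator of $A^2/A^3$, the relation $v^3=0$ together with $A\cdot A^3\subseteq A^4=0$ gives $q(v)\,(v\,w_1)=0$ for all $v\in V$; since squares span $A^2/A^3$ the quadratic form $q$ is nonzero, so the linear map $v\mapsto v\,w_1$ vanishes identically, whence $A^3=A\cdot A^2=0$ — a contradiction.

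It then remains to enumerate the orbits of $\mu$ by numerical type. For $\dim W=1$ a single quadratic form on $V=\mathbb{C}^{\dim V}$ is classified by its rank: on $\mathbb{C}^2$ the ranks $1,2$ give $\mathcal R^3_1,\mathcal R^3_2$, on $\mathbb{C}^3$ the ranks $1,2,3$ give $\mathcal R^4_3,\mathcal R^4_4,\mathcal R^4_5$, and the type $(1,1)$ recovers $\mathcal R^2$. The remaining type $(\dim V,\dim W)=(2,2)$ amounts to a two-dimensional pencil of binary quadratic forms, i.e.\ a $2$-plane $P$ in the $3$-dimensional space $\operatorname{Sym}^2V^*$. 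Such a $P$ is the annihilator of a line $\langle Q\rangle\subset\operatorname{Sym}^2V$, and there are exactly two $GL(V)$-orbits according as $Q$ has rank $1$ or $2$; since $P$ has a base point $[v]$ precisely when $Q=v^2$ is a perfect square, these are the base-point pencil $\langle x_1^2,x_1x_2\rangle$ and the base-point-free pencil $\langle x_1^2+x_2^2,x_1x_2\rangle$, which read off as $\mathcal R^4_1$ and $\mathcal R^4_2$ respectively. Collecting the cases produces exactly the algebras of the table, which completes the proof.
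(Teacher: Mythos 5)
Your proof is correct in substance, and it takes a genuinely different route from the paper's. The paper (following Gerstenhaber--Myung and Wesseler) argues by hand on $\dim R^2$: it rules out $\dim R^2=3$ by the same one-generator chain argument you use for the case $d=1$; for $\dim R^2=2$ it picks $r,r'$ with $r^2, rr'$ spanning $R^2$ and normalizes $(r')^2=\alpha r^2$ with $\alpha\in\{0,1\}$, giving $\mathcal R^4_1,\mathcal R^4_2$; for $\dim R^2=1$ it reduces, exactly as in your $\dim W=1$ cases, to the rank of a quadratic form on a complement of $R^2$. What you do differently is to isolate and actually prove the statement the paper uses only tacitly, namely your Main Lemma $A^3=0$: the paper's multiplication tables presuppose that all products with $R^2$ vanish, and this is nowhere verified there. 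Once two-step nilpotency is established, your reformulation of the classification as the orbit problem for surjective symmetric bilinear maps $V\times V\to W$ under $GL(V)\times GL(W)$, with the type $(2,2)$ handled as pencils of binary quadrics via the annihilator line (of rank $1$ or $2$), makes both completeness and non-redundancy of the list transparent; the paper's direct normalization is shorter and more elementary, but leaves those points implicit.

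One step you assert too quickly: that nilpotency alone forces the power chain to decrease strictly, hence $A^4=0$ in your delicate $(2,2)$ subcase. For commutative non-associative algebras this is false in general: the three-dimensional commutative algebra with basis $(a,b,z)$ and nontrivial products $a^2=b$, $ab=b^2=z$ is nilpotent (one checks $A^5=0$) yet satisfies $A^4=A^3=\mathbb{C}z\neq 0$; of course it has nilindex $4$, not $3$. So your inference genuinely needs the nilindex-$3$ identity, and the fix is one line: from $x(yz)+y(zx)+z(xy)=0$ applied to $(c,d,ab)$ one gets $(ab)(cd)=-c\bigl(d(ab)\bigr)-d\bigl(c(ab)\bigr)$, whence $A^2A^2\subseteq A\cdot A^3$ and, iterating, $A^{k+1}=A\cdot A^{k}$ for all $k\geq 2$; a stalled chain $A\cdot A^k=A^k\neq 0$ would then reproduce itself in every higher power, contradicting nilpotency. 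With that supplement your Main Lemma, and hence the whole argument, is complete --- and at that point your write-up is in fact more rigorous than the paper's own treatment, which leans on the corresponding low-dimensional facts from the literature without comment.
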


If $\dim(R)=2$, then the conclusion is trivial. Let us prove the above classification for $\dim R=4$, letting the remaining case to the reader. 
First of all we recall some useful results of  \cite{GerstenhaberMyung}: 
since $R$ is nilpotent and non trivial, one has $R^2\subsetneq  R$, implying $\dim R^2\in \{1,2,3    \}$ ($R^2=0$ must be excluded since 
$R$ is not trivial by hypothesis).  Assume that $\dim R^2=3$. Then there exists $r\in R$ such that $R=\mathbb C  r \oplus R^2$.  Then $R^2=\mathbb C r^2+r R^2+(R^2)^2\subset \mathbb C r^2+ R^3$, yielding 
$R^2=\mathbb C r^2+ R^3$.  This implies  $R={\rm Span}\{   r,r^2\} + R^3$. By repeating these arguments, one proves that 
$R={\rm Span}\{    r,\ldots,r^{k-1}\} + R^{k}$ for every $k\geq 1$.  Since $r^3=0$ and since  $R$ is nilpotent, we would deduce 
$R={\rm Span}\{    r,r^2\}$ and $\dim(R)=2$, contrary to our assumption.
Thus necessarily  $\dim R^2\in \{1,2   \}$.

Assume first  $\dim R^2=2$.  Let $r,r'\in R$ such that $r^2 $and  $rr'$ span $R^2$.  Then $(r')^2=\alpha \, r^2+\beta\,  rr'$ with $\alpha,\beta\in \mathbb C$. By replacing $r'$ by $r'-{\beta}r/2$, we can suppose $(r')^2=\alpha\, r^2$.  If  $\alpha\neq 0$, replacing $r$ by $r/{\sqrt{\alpha}} $, we can also suppose  $\alpha=1$.   Then one obtains two cases: the algebras $\mathcal R_1^4$ and $\mathcal R_2^5$ in the table above.

Assume now  $\dim R^2=1$. Let $r_1$ be such that $R^2=\mathbb C \, r_4$ where $r_4=(r_1)^2$  and choose $r_2 $ and $r_3$ such that $(r_1,\ldots,r_4)$ is a basis of $R$.  The product on $R$ is determined by 
the quadratic form $\varphi$ on $R'={\rm Span}\{ r_1,r_2,r_3 \}$  defined by the relation $rr'=\varphi(r,r') r_4$ for $r,r'\in R'$. Note that $\varphi$ is non-trivial since $\varphi(r_1)=1$.  Moreover, one verifies that isomorphic quadratic forms on $R'$ induce isomorphic Jordan algebras. Hence there are only three possibilities corresponding to the possible values 1, 2 or 3 for the rank of the quadratic form.  The corresponding algebras are denoted by $\mathcal R^4_3, \mathcal R^4_4$ and $\mathcal R^4_5$ in T{\small{ABLE}} 1 above.

\subsection{\bf Rank 3 Jordan algebras of dimension 3 and quadro-quadric Cremona transformations of $\mathbb P^2$}
\label{S:JordanDim3}
It is an easy exercise to determine all Jordan algebra of dimension 3 by using the
 $JC$-correspondence: a quadro-quadric Cremona transformation of the projective plane is given by its base locus scheme that is a non-degenerate  0-dimensional subscheme of length 3 in $\mathbb P^2$. It is immediate to see that there are exactly three such subschemes (up to isomorphisms) and that they belong to the same irreducible component of ${\rm Hilb}^3(\mathbb P^2)$. Consequently, up to isomorphisms,  there are three rank 3 Jordan algebras  of dimension 3.  
 
 The classification is collected in the following table: 

\begin{table}[H]
  \centering
  \hspace{-0.4cm}
  \begin{tabular}{|c|c|c|c|c|} 
   \hline
     {\bf Algebra}   & $\boldsymbol{\dim R}$  & {\bf Semi-simple part }& {\bf Adjoint} $\boldsymbol{(x,y,z)^\#}$ &{\bf Base locus}    \\ \hline
     \hline
$\mathbb C\times\mathbb C\times\mathbb C$
 &   $0$  &  $\mathbb C\times\mathbb C\times\mathbb C$
  & $(yz,xz,xy)$ & 
  \begin{tabular}{c}\vspace{-0.3cm}\\
  \includegraphics[width=6mm,height=6mm]{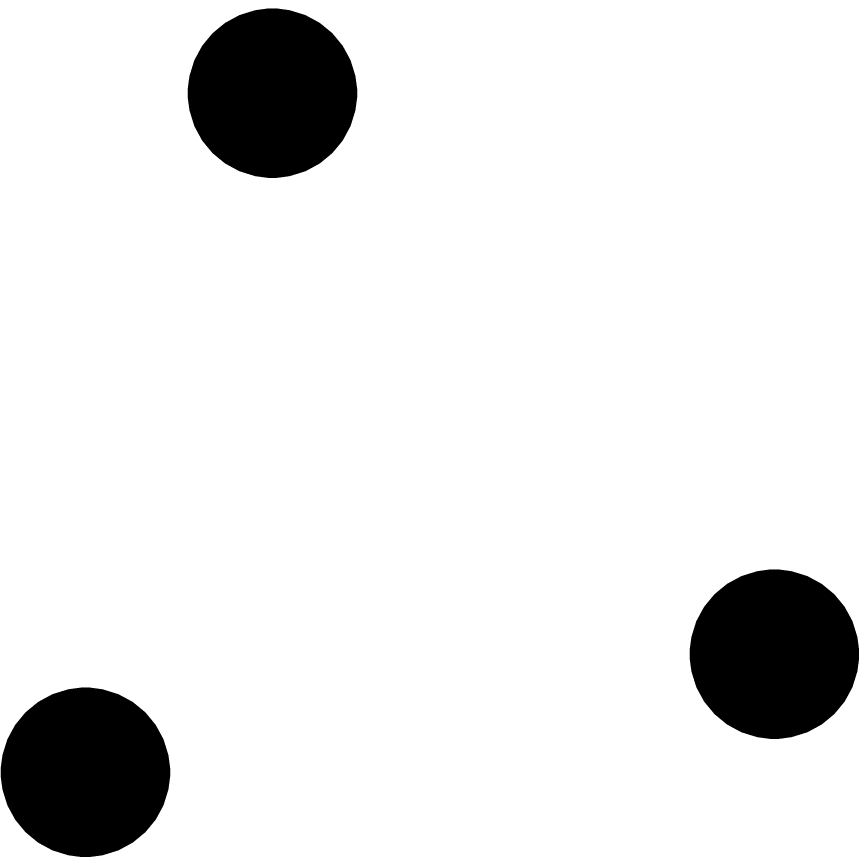} \\
  \end{tabular}
  \\
  \hline 
 $\mathbb C\times \frac{\mathbb C[\varepsilon]}{(\varepsilon^2)}$ 
& $1$  & $\mathbb C\times\mathbb C$
  & $(y^2,xy,-xz)$&
    \begin{tabular}{c}\vspace{-0.3cm}\\
  \includegraphics[width=6mm,height=6mm]{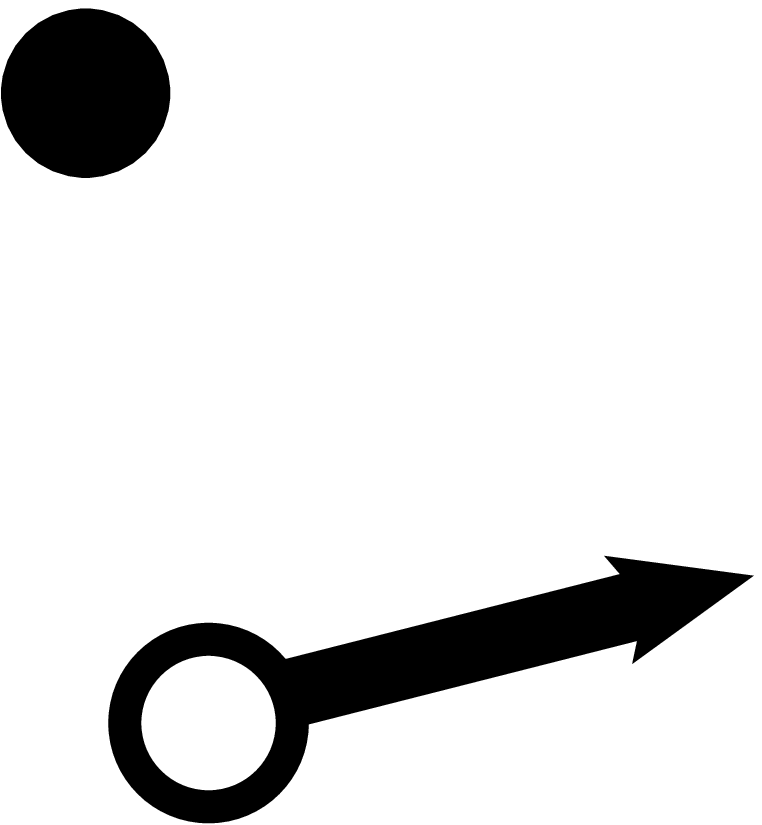} \\
  \end{tabular}
\\ \hline 
$\frac{\mathbb C[\varepsilon]}{(\varepsilon^3)}$   & $2$  & $\mathbb C$  &   $(x^2,-xy,y^2-xz)$  & 
  \begin{tabular}{c}\vspace{-0.3cm}\\
  \includegraphics[width=6mm,height=6mm]{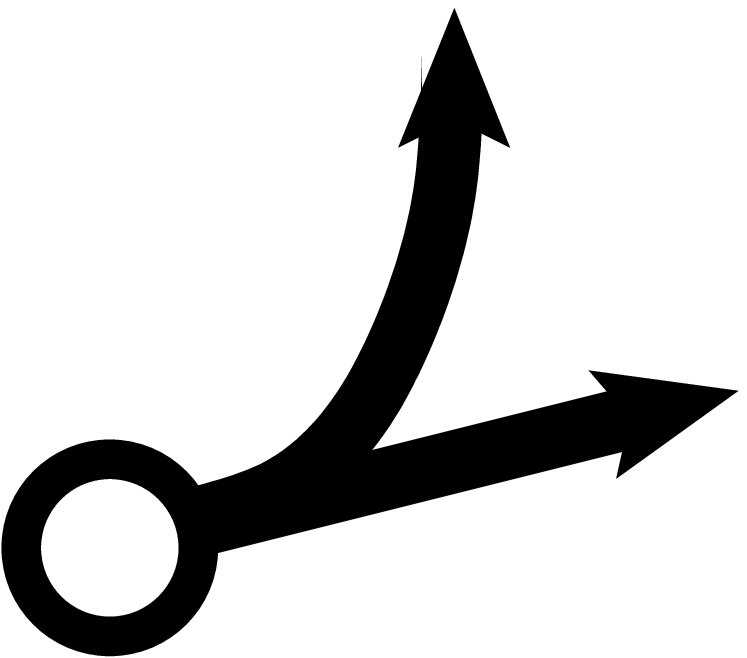} \\
  \end{tabular}
 \\
 \hline 
\end{tabular} \vspace{0.3cm}
\label{tablefp2}
\caption{Classification of rank 3 Jordan algebras of dimension three or equivalently,    of quadro-quadric Cremona transformations of $\mathbb P^2$.}
\end{table}

\subsection{\bf Rank 3 Jordan algebras of dimension 4 and quadro-quadric Cremona transformations of $\mathbb P^3$}
\label{S:JordanDim4}

The classification of $(2,2)$ Cremona transformations in $\p^3$ has been considered recently  in \cite{PRV} and classically by Enriques and Conforto (see all the references in \cite{PRV}).

In this section, we give the classification of rank 3 Jordan algebra in dimension 4.  This classification is also classical and for this reason some cases will be left to the reader. Recent references on the subject are  \cite{wesseler,Kashuba,PIRIO}.
Concerning the more particular case of  associative algebras, one can consult the classical papers \cite{study,scorzaALGEBRA4} or the more recent one 
 \cite{gabriel} (see also the references therein). 
\sk

\begin{table}[h]
  \centering
  \hspace{-0.4cm}
  \begin{tabular}{|c|c|c|c|c|} 
   \hline
   {\bf Algebra} $  {\boldsymbol J}$   &  $  {\boldsymbol{ \dim  R}}$ & $  {\boldsymbol J}_{\! \rm ss}$& {\bf Adjoint}  $\boldsymbol{(x,y,z,t)^\#}$ &{\bf Base locus}    \\ \hline
     \hline
     $\mathcal J^4_1=\mathbb C\times \mathcal J^3_{q,2}$  &  0 & $\mathbb C\times \mathcal J^3_{q,2}$ & $
\big(y^2+z^2+t^2\,,\, xy \,,\,  -xz \,,\,  -xt\big)$   &   
\begin{tabular}{c} \vspace{-0.2cm}\\
  \includegraphics[width=20mm,height=15mm]{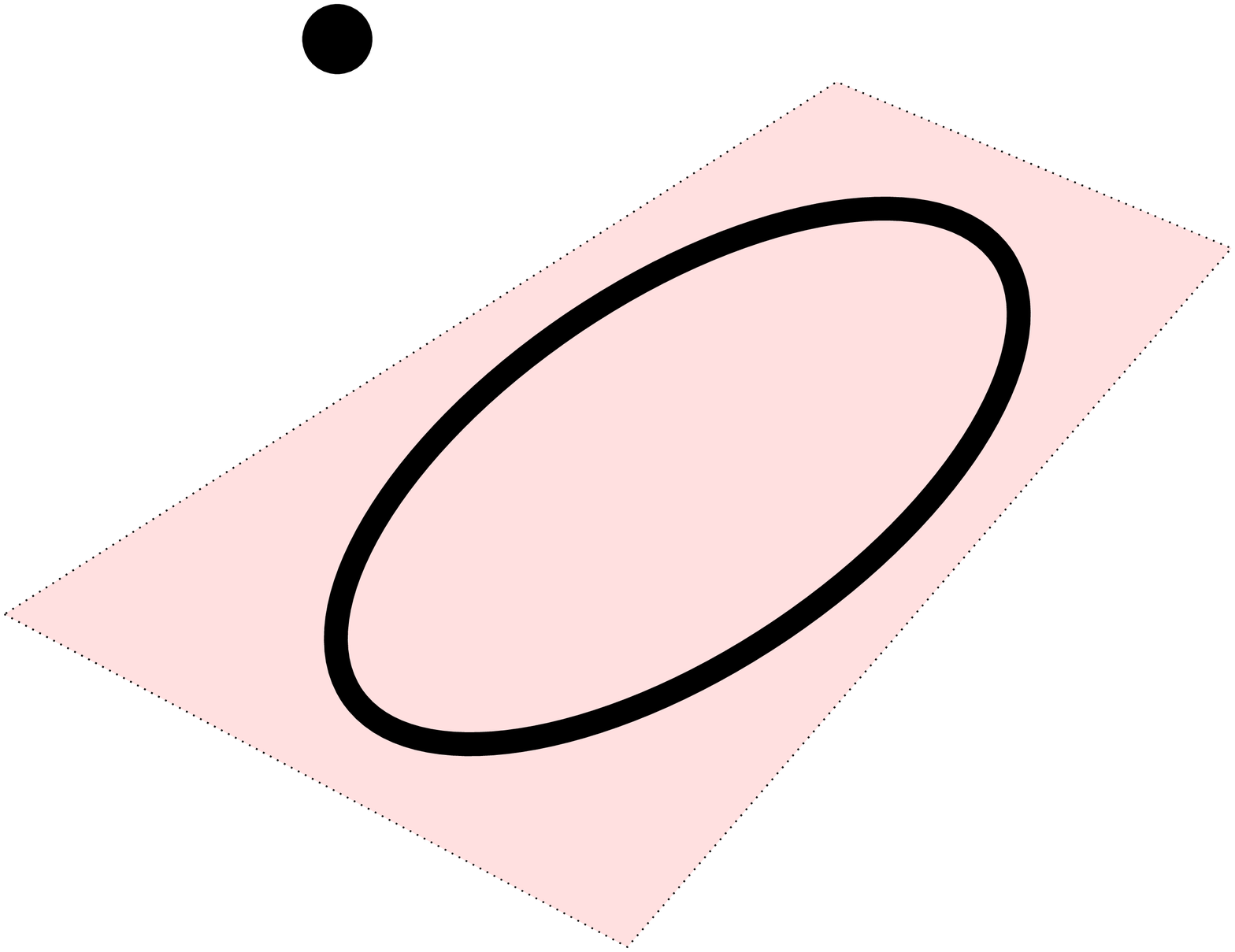}  
 \\
\end{tabular} 
   \\ 
\hline \hline
$\mathcal J^4_2= \mathbb C\times \mathcal J^3_{q,1}$  &  1 & $\mathbb C\times \mathbb C\times \mathbb C$&$\big(yz\,,\, xz\,, xy\,,\,xt\big)$   &  
\begin{tabular}{c} \vspace{-0.2cm}\\
  \includegraphics[width=20mm,height=15mm]{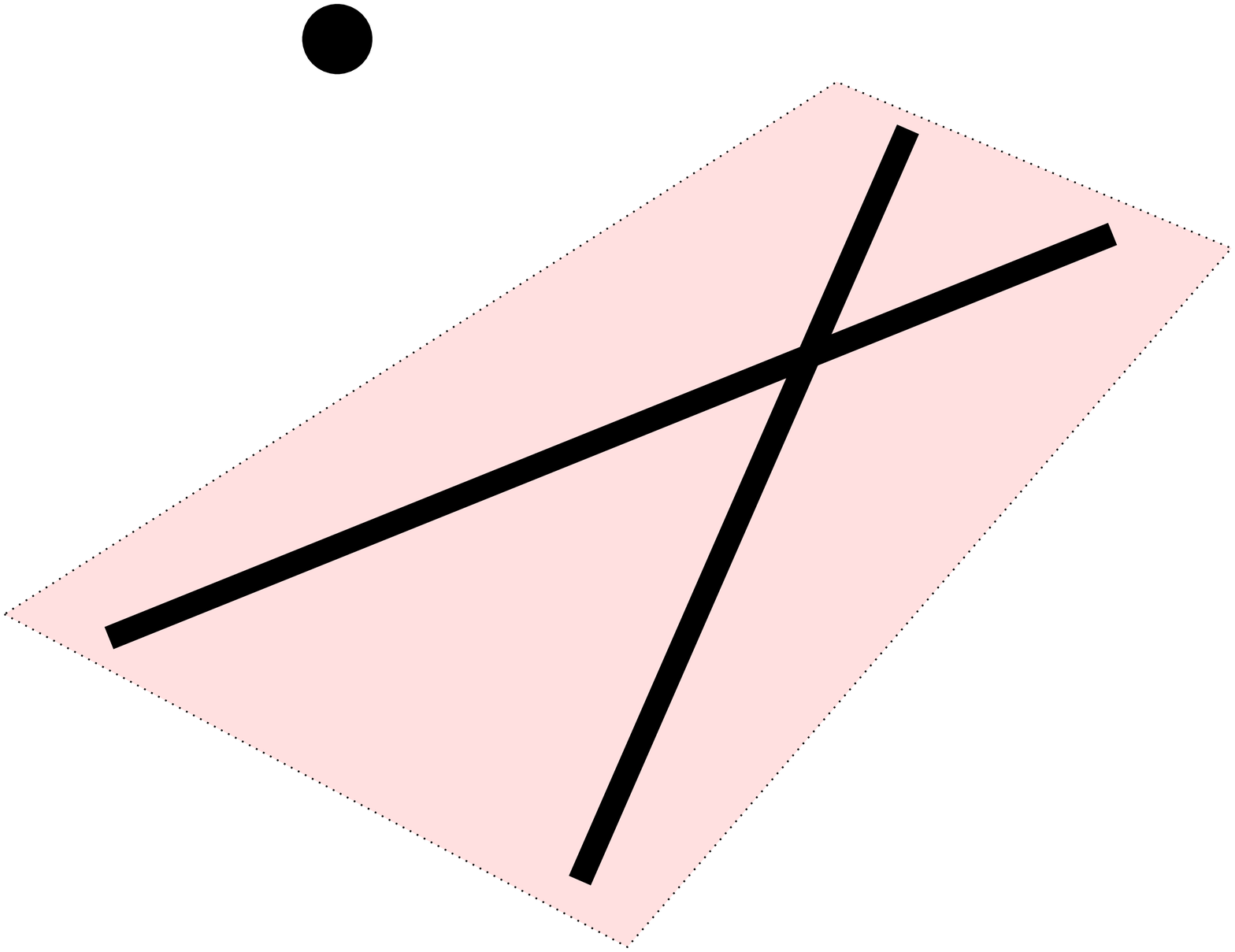}  
 \\
\end{tabular}
    \\ \hline \hline
    $\mathcal J^4_3=\mathbb C\times \mathcal J^3_{q,0}$   & 2 & $\mathbb C\times \mathbb C$ &  $\big({y}^2\,,\,xy\,, xz\,,zt\big)$   &  
\begin{tabular}{c} \vspace{-0.2cm}\\
  \includegraphics[width=20mm,height=15mm]{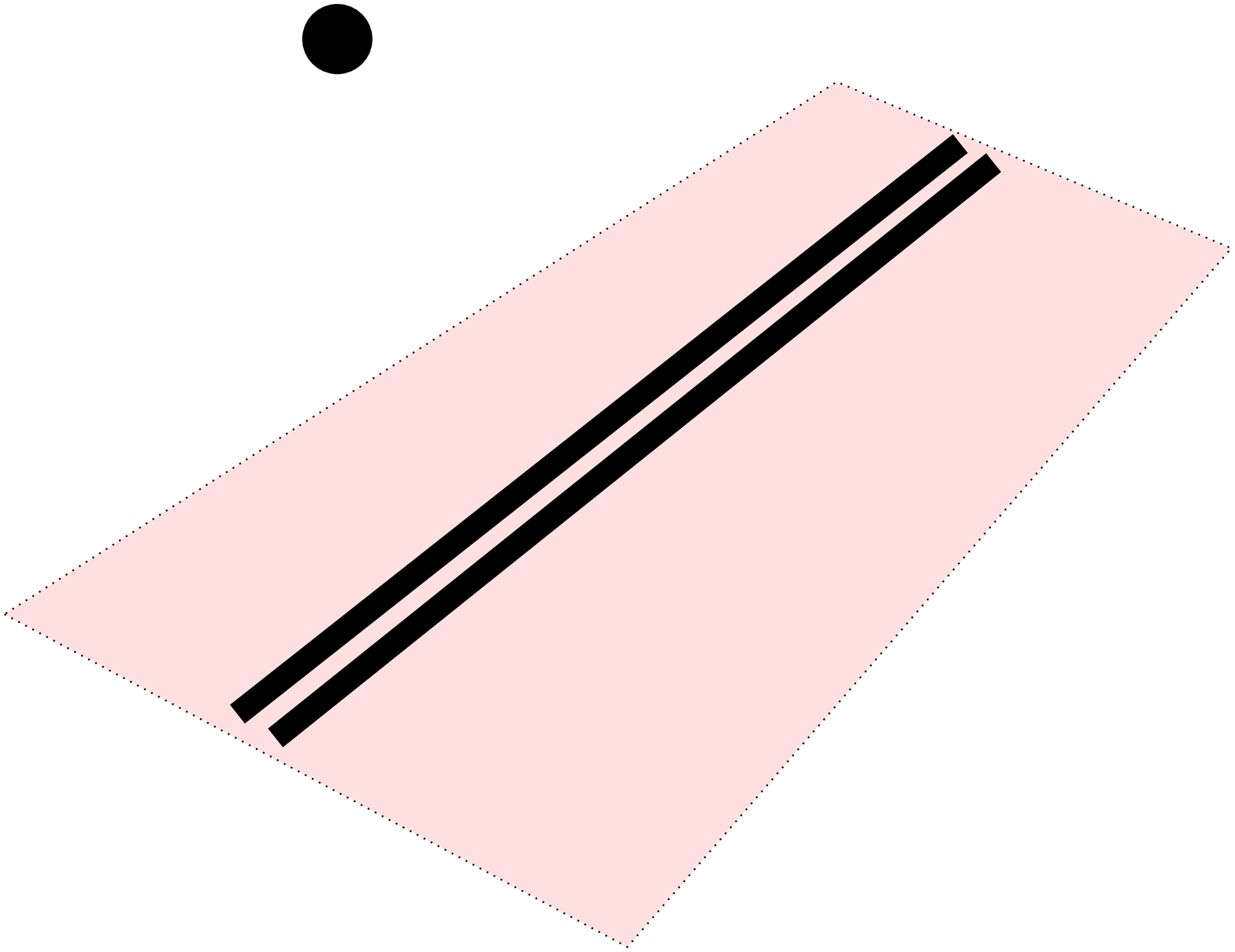}  
 \\
\end{tabular}
\\ 
\hline
   $\mathcal J^4_4$
   & 2 & $\mathbb C\times \mathbb C$& $\big(xy\, , \,x^2\, , \,   t^2-yz \, , \,  xt\big)$   &    
\begin{tabular}{c} \vspace{-0.2cm}\\
\psfrag{eta}[][][1]{$    \scriptstyle{{\xi}} \,  $}
 \includegraphics[width=20mm,height=15mm]{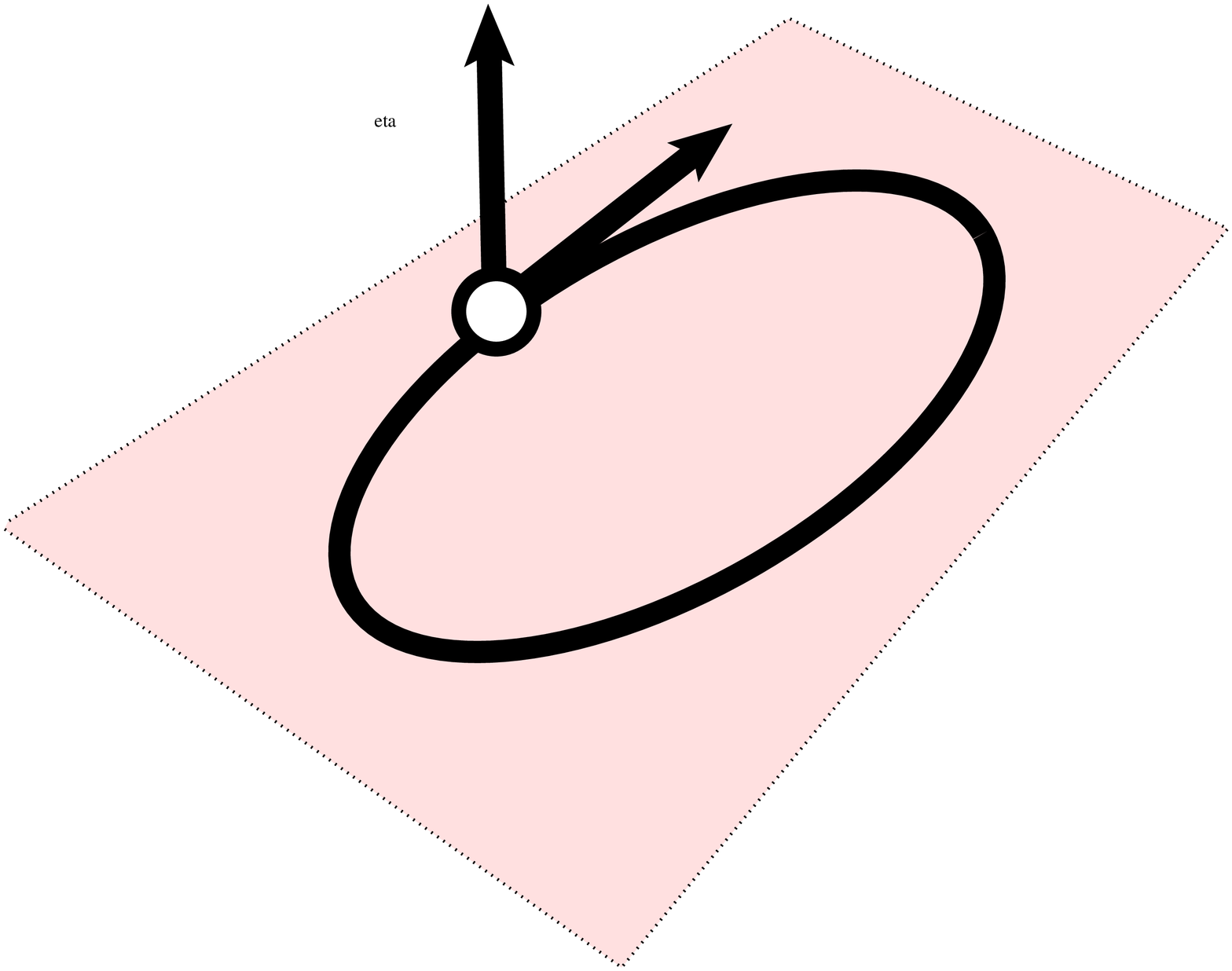}  
 \\
\end{tabular}
  \\ \hline
$\mathcal J^4_5$  
& 2 &  $\mathbb C\times \mathbb C$ & $\big(xy,x^2,yz,xt\big)$   &    
\begin{tabular}{c} \vspace{-0.2cm}\\
\psfrag{tau}[][][1]{$    \scriptstyle{{\tau}} \,  $}
 \includegraphics[width=20mm,height=15mm]{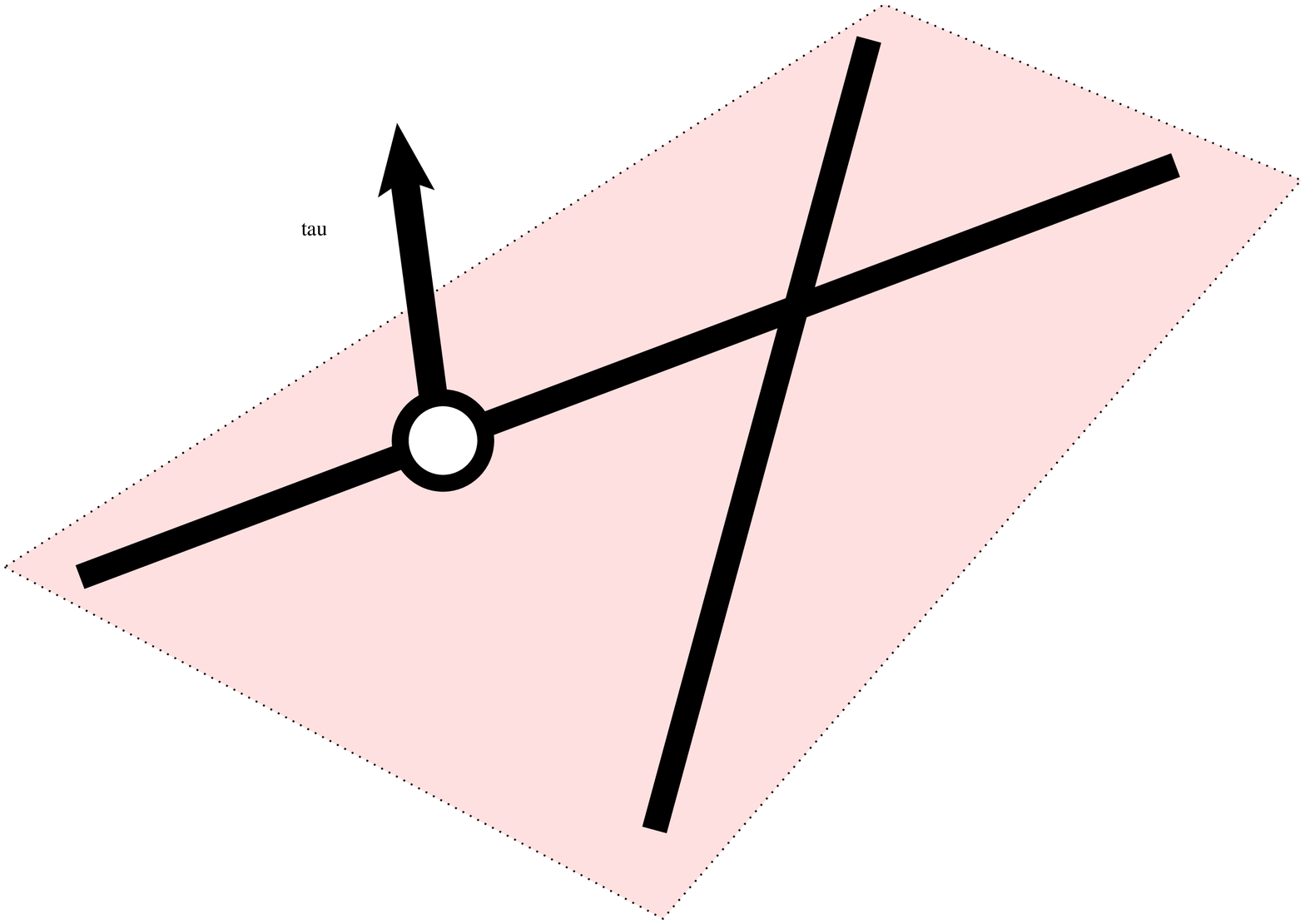}  
 \\
\end{tabular}  \\ 
\hline \hline
$\mathcal J^4_6$
&  3&  $\mathbb C$   & $
\big(x^2\,, -xy\,, -xz\,,\, 2yz-xt\big) 
$   &   
\begin{tabular}{c} \vspace{-0.2cm}\\
\psfrag{chi}[][][1]{$    \scriptstyle{{\chi}} \; \,  $}
 \includegraphics[width=20mm,height=15mm]{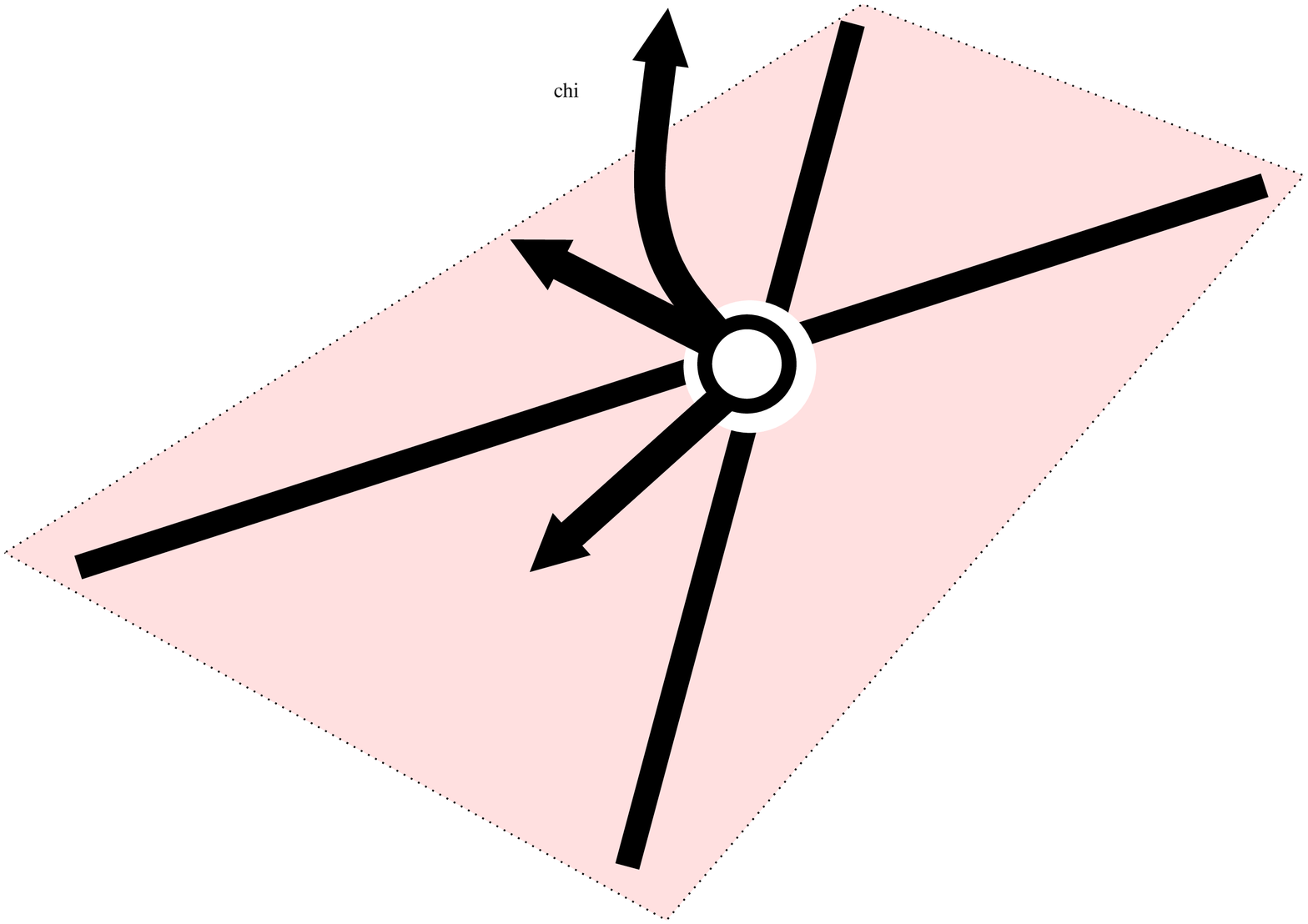}  
 \\
\end{tabular}  \\ 
\hline
$\mathcal J^4_7$  
&  3  & $\mathbb C$  & $
\big(x^2\,, -xy\,, -xz\,, \,{y}^2-xt\big) 
$   &  
\begin{tabular}{c} \vspace{-0.2cm}\\
\psfrag{eta}[][][1]{$    \scriptstyle{{\eta}} \;\,   $}
 \includegraphics[width=20mm,height=15mm]{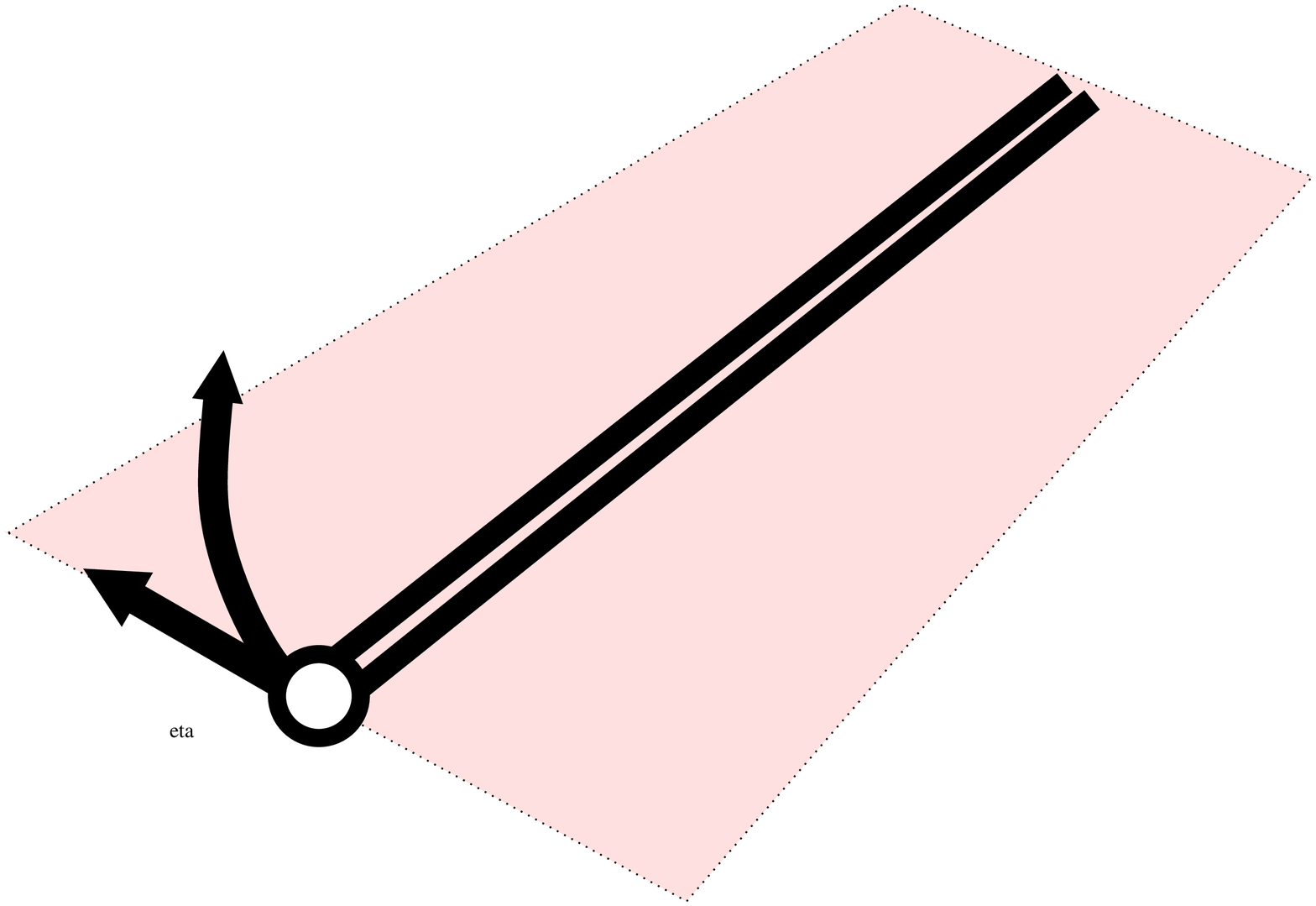}  
 \\
\end{tabular} \\ \hline
\end{tabular} \vspace{0.35cm}
\label{Ta:dim4}
\caption{Classification of rank 3 Jordan algebras of dimension four or equivalently,    of quadro-quadric Cremona transformations of $\mathbb P^3$.}
\end{table}


The classification given by the table above is easy to obtain. 
 Let $J$ be a rank 3 Jordan algebra of dimension 4. 
First of all, when $R={\rm {\rm Rad}}(J)$ is trivial, $J$ is semi-simple hence it is the direct product of $\mathbb C$ with  $\mathcal J^3_{q,2}$.  \smallskip 

When  $\dim R=1$, the semi-simple part $J_{\rm ss}$ of $J$ has dimension 3. It cannot be of rank 1 (it has dimension 3) nor of rank 2: in the latter case, it would be isomorphic to $\mathcal J^3_{q,2}$ that does not admit a cubic form. Hence 
$J_{\rm ss}$ has rank 3 and can be assumed to be the direct product of three copies of $\mathbb C$.  Clearly $R^2=0$ hence the Jordan product is completely determined by the three complex numbers $\alpha_1,\alpha_2$ and $\alpha_3$  such that $e_ir=\alpha_i r$ for $i=1,2,3$, where $(e_1,e_2,e_3)$ stands for the standard basis of $J_{\rm ss}=\mathbb C\times \mathbb C\times \mathbb C$ and where $r$ is a non trivial element of $R$.  It is easy to prove that there exists  only one possibility (up to isomorphisms) for the $\alpha_i$'s, namely $\alpha_1=\alpha_2=1$ and $\alpha_3=0$.   One easily verifies that the corresponding  algebra  is isomorphic to $\mathcal J^4_2=\mathbb C\times 
\mathcal J^3_{q,1}$.\mk

We now assume  $\dim R=2$.  In this case the semi-simple part is $J_{\rm ss}=\mathbb C\times \mathbb C$. Let $(e_1,e_2)$ stands for the image of the standard basis of $\mathbb C^2=J_{\rm ss}$ in an embedding $J_{\rm ss}\hookrightarrow  J$.
This is a set of pairwise irreducible idempotents.   Let $J=J_{11}\oplus J_{12}\oplus J_{22}$ be the associated Peirce decomposition.  \sk

If $J_{12}\cap R=(0)$ there are two possibilities:  either  $(a)$ $R\subset J_{22}$,   or  $(b)$ $R= {\rm Rad}(J_{11})\oplus   {\rm Rad}(J_{22})$ with $\dim {\rm Rad}(J_{ii})\neq 0$ for $i=1,2$.   In any case, one has $J_{12}=(0)$  and $J$ is isomorphic to the direct product $J_{11}\times J_{22}$ 
by \eqref{E:Jij}.  If $R\subset J_{22}$, then 
$J_{11}$ is isomorphic to $\mathbb C$ for dimensional reasons and 
 $J_{22}$ is a Jordan algebra of rank 2, dimension 3 with radical of dimension 2. Hence $J_{22}\simeq \mathcal J_{q,0}^3$ 
 and $J$ is isomorphic to the algebra $\mathcal J^4_3= \mathbb C \times \mathcal J_{q,0}^3$.   Case $(b)$ does not occur. Indeed, in this case  $J_{ii}$ had dimension 2 and rank 2 for $i=1,2$  so that  $J\simeq J_{11}\times J_{22}$  would have rank 4, contradicting  our assumption.\sk
 
 We now treat the case   $\dim (J_{12}\cap R)=1$.  Let $a$ be such that $R\cap J_{12}=\mathbb C a$.  By \eqref{E:RJii}, one can assume that $\dim {\rm Rad}(J_{11})=1$ and $\dim {\rm Rad}(J_{22})=0$. Let $b$ such that ${\rm Rad}(J_{11})=\mathbb Cb$.  For dimensional reasons, it follows that $J_{11}=\mathbb C e_1\oplus \mathbb Cb$, $J_{12}=\mathbb C a$ and $J_{22}=\mathbb Ce_2$.  Since $b^2\in R\cap J_{11}=\mathbb Cb$ and because $b^3=0$ (recall that  $J$ has rank 3 and  that $b\in {\rm Rad}(J)$) we get $b^2=0$.  The structure of $J$ will be completely determined when the two product $ab$ and $a^2$ are. 
 According to \eqref{E:Jii}, one has $ab\in J_{12}\cdot J_{11}\subset J_{12}=\mathbb Ca$ hence there exists $\kappa_1\in \mathbb C$ such that $ab=\kappa_1a$. Hence $L_b(a)=\kappa_1 a$ thus $\kappa_1=0$ because the multiplication $L_b$ by $b$ is also nilpotent, as one easily sees.  Finally, one has $a^2\in R\cap J_{12}^2\subset R\cap (J_{11}\oplus J_{22})=\mathbb Cb$ so that there exists $\kappa\in C$ such that $a^2=\kappa b$.  In the coordinate system  associated to the basis $(e_1,b,a,e_2)$ of $J$, the Jordan product is given by : 
  $$
 (x_1,\beta,\alpha,x_2)\cdot (x_1',\beta',\alpha',x_2')=\Big(x_1x_1',
 \beta x_1'+x_1\beta'+
 \kappa \alpha\alpha'    ,    
 \frac{1}{2}\big( (x_1+x_2)\alpha'+\alpha(x_1'+x_2')
 \big)
   ,   x_2x_2' \Big).
 $$
 One verifies that the corresponding Jordan algebras have rank 3 and that for any $\kappa\in \mathbb C$, the adjoint is given by 
 $$
  (x_1,\beta,\alpha,x_2)^\#=\big( x_1x_2, \kappa \alpha^2-\beta x_2   , -\alpha x_1   ,   x_1^2\big). 
 $$
Up to isomorphisms, there are only two cases to be considered, namely $\kappa=1$ and $\kappa=0$.  We denote respectively by $\mathcal J^4_4$ and $\mathcal J_5^4$ the two corresponding Jordan algebras. 
 \mk 
 
Finally, when $\dim R=3$ the multiplicative structure  of $J$ is completely determined by  that of $R$. Thus there are two possibilities in this case, denoted by $\mathcal J^4_6$ and $\mathcal J^4_7$ in the table above, that correspond respectively to the  unitalizations of the nilalgebras $\mathcal R_2^3$ and $\mathcal R_1^3$ of T{\small{ABLE}} 1. 
\mk

\subsection{\bf Quadro-quadric Cremona transformations of $\mathbb P^4$ and rank 3 Jordan algebras of dimension 5}
\label{S:Bir22P4}
\label{S:JordanDim5}

In this section, we use the same strategy  to obtain the main result of this paper: a complete and explicit classification of quadro-quadric Cremona transformations of $\mathbb P^4$.

\subsubsection{The three generic quadro-quadric Cremona transformations of $\mathbb P^4$}
\label{S:qqP4-Bruno-Verra}
In \cite{brunoverra}, Bruno and Verra give a modern proof of the following result firstly proved by Semple in \cite{semple}, providing the classification of  the base locus schemes of general elements of ${\bf Bir}_{2,2}(\mathbb P^4)$.

\begin{thm} 
\label{T:BV1}
The base locus scheme of a general quadro-quadric Cremona transformation of $\mathbb P^4$ is projectively equivalent to  one of the  subschemes $\mathcal B_I,\mathcal B_{II}$ an $\mathcal B_{III}$ of $\mathbb P^4$ where: 
\begin{enumerate}
\item[(I)] $\mathcal B_I$ is the disjoint union   of a smooth quadric surface  $Q$ with  a point $p$ lying outside the hyperplane $\langle Q \rangle$;\smallskip 
\item[(II)] $\mathcal B_{II}$ is the union  of a 2-plane $\pi$ with  two skew lines $\ell_1$ and $\ell_2$,  each one intersecting $\pi$  in one point;\smallskip 
\item[(III)] $\mathcal B_{III} $ is the scheme theoretic  union 
 of    a double line $\mathcal L$ in a hyperplane  $H$  with a smooth conic $C$   tangent to $H$ at the point $C\cap \mathcal L_{\rm red}$.
\end{enumerate}
\end{thm}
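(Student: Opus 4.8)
The plan is to exploit the $JC$-correspondence established in \cite{PR2}: since a general quadro-quadric Cremona transformation of $\mathbb P^4$ corresponds to a general rank $3$ Jordan algebra of dimension $5$, it suffices to understand the \emph{generic} rank $3$ Jordan algebras $J$ of dimension $5$ and to read off their base loci $\mathcal B(J)\subset\mathbb P(J)=\mathbb P^4$ from the adjoint map $[\#]$. Concretely, I would first argue that a \emph{general} such $J$ has a semi-simple part $J_{\rm ss}$ of one of only a few possible types, and that the three schemes $\mathcal B_I,\mathcal B_{II},\mathcal B_{III}$ arise exactly as the base loci associated to the three dominant strata of the parameter space of such algebras. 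The semi-simple part can have dimension $5,4,3$ (dimension $1$ or $2$ being impossible for a rank $3$ algebra of the required kind), and correspondingly $\dim R=0,1,2$; the cases $\dim R\geq 3$ are non-generic and produce the degenerate (type $(\mathrm{B})$/Bruno--Verra) base loci, not the three listed above.

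Next I would treat each generic stratum and compute the adjoint explicitly. For $\dim R=0$ the algebra is semi-simple, namely $\mathbb C\times \mathcal J^5_{q,4}$ (a copy of $\mathbb C$ glued to a rank $2$ quadratic algebra of maximal rank), whose adjoint has base scheme the disjoint union of the quadric surface $Q=\{N_{\mathcal J}=0\}$ with the isolated point coming from the $\mathbb C$ factor lying off $\langle Q\rangle$; this gives $\mathcal B_I$. For $\dim R=1$ the generic extension yields $J_{\rm ss}=\mathbb C\times\mathbb C\times\mathbb C$ with a one-dimensional radical attached via generic Peirce eigenvalues, and using the Peirce relations \eqref{E:Jii}--\eqref{E:Jij} together with the adjoint formula $x^\#=x^2-T(x)x+S(x)e$ one computes that $\mathcal B(J)$ is the union of a $2$-plane with two skew lines meeting it, i.e. $\mathcal B_{II}$. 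For $\dim R=2$ the generic case is the one in which the radical carries a rank-one quadratic structure interacting with the idempotents so that the adjoint has the form producing a double line in a hyperplane together with a tangent conic, namely $\mathcal B_{III}$; this is essentially the five-dimensional analogue of $\mathcal J^4_4$ and of the scheme $\chi/\xi$-type degeneration described in Section~\ref{Notation}.

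The technical backbone is the same Peirce-decomposition bookkeeping already carried out in Section~\ref{S:JordanDim4} for dimension $4$: fix a decomposition of the unity into orthogonal irreducible idempotents, split $J=\bigoplus J_{ij}$, and constrain the finitely many structure constants using the multiplication rules \eqref{E:Jii}--\eqref{E:Jij} and the nilpotency of the radical. One then normalizes by isomorphism (exactly as in the $\kappa=0,1$ dichotomy for $\mathcal J^4_4,\mathcal J^4_5$) to isolate, among all rank $3$ algebras, those forming the open dense stratum. Finally, for each normal form I would write the adjoint in coordinates, extract the ideal $\mathcal I_{\mathcal B(J)}$ of quadrics defining it, and identify the resulting subscheme of $\mathbb P^4$ geometrically as $Q\sqcup\{p\}$, as $\pi\cup\ell_1\cup\ell_2$, or as $\mathcal L\cup C$.

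The main obstacle I anticipate is not the individual adjoint computations, which are routine quadratic algebra, but rather the \emph{genericity} argument: one must verify that these three families genuinely constitute the general members of ${\bf Bir}_{2,2}(\mathbb P^4)$ and are not specializations of one another, i.e. that they sit in distinct top-dimensional components of the relevant Hilbert scheme (equivalently, that the corresponding strata in the moduli of rank $3$ five-dimensional Jordan algebras are each dominant and irreducible). Controlling this stratification—ruling out that, say, $\mathcal B_{III}$ degenerates to $\mathcal B_{II}$ within a single irreducible family—is where the real care is needed, and it is precisely here that one must invoke the dimension counts and the classification of rank $3$ Jordan algebras of dimension $5$ rather than merely exhibiting the three examples.
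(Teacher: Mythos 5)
Your overall route---classify rank $3$ Jordan algebras of dimension $5$ via the $JC$-correspondence and read the three base loci off the adjoints of the \emph{generic} algebras---is genuinely different from the paper's treatment of this statement: the paper does not reprove Theorem \ref{T:BV1} but quotes it from Semple \cite{semple} and Bruno--Verra \cite{brunoverra}, whose argument is an induction on dimension (projection of the homaloidal quadrics from a point where $f$ is an isomorphism, reducing to birational maps of bidegree $(2,n)$ in lower dimension). Your route is instead the one the paper follows for its stronger Theorem \ref{classP4}. In principle it can work, but your identification of the dominant strata---the load-bearing step of your plan---is factually wrong. Comparing with Table 6: the generic type I map does come from the unique semi-simple algebra, which is $\mathcal J^5_{16}=\mathbb C\times \mathcal J^4_{q,3}$ (your $\mathbb C\times\mathcal J^5_{q,4}$ is $6$-dimensional); but the generic type II map comes from $\mathcal J^5_{13}=\mathbb C\times\mathcal J^4_{q,1}$ with $\dim R=2$, and the generic type III map comes from $\mathcal J^5_{11}$ with $\dim R=3$. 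Your $\dim R=1$ analysis is dimensionally inconsistent: $J_{\rm ss}=\mathbb C\times\mathbb C\times\mathbb C$ together with a $1$-dimensional radical is a $4$-dimensional algebra, not a $5$-dimensional one; in dimension $5$ the case $\dim R=1$ forces $J_{\rm ss}\simeq \mathbb C\times\mathcal J^3_{q,2}$, yields the single algebra $\mathcal J^5_{15}$, and its base locus is of type I, not II. Likewise your parenthetical claim that the semi-simple part cannot have dimension $1$ or $2$ is false (rank is not inherited from $J_{\rm ss}$: all the algebras $\mathcal J^5_1,\ldots,\mathcal J^5_{12}$ have $\dim J_{\rm ss}\le 2$). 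Finally, $\mathcal B_{III}$ (double line in a hyperplane plus a tangent conic) is \emph{exactly} the Bruno--Verra configuration (B) for $m=4$, so the dichotomy you draw between ``the three listed loci'' and ``type (B) degenerations produced by $\dim R\geq 3$'' does not exist; on the contrary, discarding $\dim R\geq 3$ discards the generic type III transformation itself.

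These errors also undermine the genericity argument that you correctly single out as the crux. The radical dimension does not stratify ${\bf Bir}_{2,2}(\mathbb P^4)$ by genericity: type I algebras occur with every $\dim R\in\{0,1,2,3,4\}$, and the generic type III algebra has a \emph{larger} radical than the generic type II one, so no ``smaller radical $=$ more generic'' principle can be invoked. Moreover, your hope of ruling out degenerations between types by exhibiting them as distinct dominant strata of maps runs against a phenomenon the paper makes explicit: the family $f_\lambda$ with $f_\lambda$ linearly equivalent to $f_{\mathcal J^5_2}$ (type III) for $\lambda\neq 0$ and $f_0=f_{\mathcal J^5_1}$ (type II) shows that maps of one type can specialize to maps of another type; the types are separated only because the associated family of base locus schemes is not flat, i.e.\ the separation happens at the level of Hilbert-scheme components (the paper distinguishes them by the Hilbert polynomials $h_I$, $h_{II}$, $h_{III}$, which are pairwise distinct), not at the level of orbit closures of the maps. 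So a purely Jordan-theoretic proof of Theorem \ref{T:BV1} would require the full classification of all sixteen algebras (including $\dim R=3,4$) together with a determination of which of the sixteen orbits are dense in components of ${\bf Bir}_{2,2}(\mathbb P^4)$---precisely the step the paper does not redo and instead imports from \cite{semple,brunoverra}.
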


Let us recall  that the type $T_f$ of a Cremona transformation $f:\mathbb P^n\dashrightarrow \mathbb P^n$ is (the specification of) the irreducible component of ${\rm Hilb}(\mathbb P^n)$ containing $\mathcal B_f$. 
The proof of the previous result also implies  that there are exactly  three types of  elements in ${\bf Bir}_{22}(\mathbb P^4)$, which will be denoted by $I,II$ and $III$. Moreover,
for any type $T\in \{ I,II,III   \}$, two general elements in the irreducible component of ${\rm Hilb}(\mathbb P^4)$ containing  $\mathcal B_T$ are projectively equivalent.
 This implies that  to  $T$ there corresponds what we call the {\it generic   Cremona transformation} $f_T\in 
{\bf Bir}_{22}(\mathbb P^4)$.  Normal forms   for $f_I,f_{II}$ and $f_{III}$ as well as the corresponding multidegrees are given in the  table below. 

\begin{table}[H]
  \centering
  \begin{tabular}{|c|c|l|c|}\hline
  {\bf Type}   $\boldsymbol{T}$ &    {\bf Base locus}   $\boldsymbol{\mathcal B_T}$   &\quad     {\bf   Cremona involution} $\boldsymbol{f_T(x,y,z,t,u)}$  & {\bf Multidegree}
  \\
   \hline
   \hline
 $I$  &      $Q\sqcup \{p\}$        &  \, $\big( y^2+z^2+t^2+u^2, xy,-xz,-xt,-xu\big)$        &    $(2,2,2)$    \\ \hline 
 $II$    &    $\pi\cup \ell_1 \cup \ell_2$              &  \,  $   (    yz \, , \, xz \, , \, xy\, , \, -zt\, , \, -yu    ) $ &   $(2,3,2)$\\\hline
   $III$  &     $  \mathcal L \cup C$            & \,  $   (xy, x^2,-yz+u^2, -yt,-xu)  $  
   &    $(2,4,2)$   \\    \hline
           \end{tabular}
\label{Tab:jordan}
\caption{The three generic quadro-quadric Cremona transformations of $\mathbb P^4$.}
\end{table}

The method used by Semple (and later independently by Bruno and Verra) uses induction on the dimension and can be roughly described as follows:  given   a point $o\in \mathbb P^m$ where a given  $f\in {\bf Bir}_{2,2}(\mathbb P^m)$ is an isomorphism, one takes  a general quadric $Q$ passing through $o$ and belonging to  the homaloidal  linear system $f^{-1}\lvert  \mathcal O_{\mathbb P^m}(1) \lvert$. 
 Then $P=f(Q)$ is a general hyperplane through $f(o)$ and if $\pi:Q\dashrightarrow \mathbb P^{m-1}$ stands for the restriction to $Q$ of the linear projection from $o$, one proves that $h=\pi\circ (f^{-1}\lvert _P): P\dashrightarrow  \mathbb P^{m-1}$ is birational, of bidegree $(2,n)$ with $n\in \{2,3,4\}$ and that its base locus scheme $\mathcal B_h$ is the union of $f(o)$ with the intersection of $P\simeq \p^{m-1}$ with the base locus scheme $\mathcal B_{f^{-1}}$ of $f^{-1}$ (cf. \cite[Section 2]{brunoverra} for more details). \sk 
 
As remarked by Bruno and Verra, the base locus $\mathcal B_h$ of a Cremona  map $h$ obtained by Semple's construction described above cannot be too special since it  contains $f(o)$ as an isolated point. As we shall see below and as it was already shown by the classification of ${\bf Bir}_{2,2}(\p^3)$,  a lot of interesting examples appear by degenerating   the isolated point to an infinitely near point of  the support of the general base locus scheme. Moreover, 
 there is no general description of quadratic Cremona transformations of degree $(2,3)$ and $(2,4)$ in dimension greater than 5 so that  Semple's method can work effectively only in dimension at most  $4$. However, Semple's approach yields  quite easily that any quadro-quadric Cremona transformations of $\mathbb P^4$ is a  degeneration of one of the $f_T$'s in the table above, although it does not say anything precise on the possible degenerations.
 In other terms, Semple's inductive method does not allow to obtain the complete lists of Cremona transformations of $\mathbb P^n$ for  $n\geq 4$. 
 \sk 
 
On the contrary the classification of  5-dimensional Jordan algebras of rank 3 is not difficult. By using the material of Section \ref{S:notions-tools-Jordan-algebras}, it  
amounts to elementary but a bit lengthy exercises in linear algebra.

\subsubsection{On the classification of rank 3 Jordan algebras of dimension 5}
We now classify Jordan algebras $J$ of rank 3 and of dimension 5. We shall  consider the different subcases according  to the possible dimension of the radical $R$ of $J$.
\sk 

Let us begin with the case $\dim R=4$.  Then  $J$ is the  unitalization of one of the 
 five nilalgebra $\mathcal R^4_1,\ldots,\mathcal R^4_5$ in T{\small{ABLE}} 1 (the case when $R^2=0$  would imply that $J$ has rank 2 hence it has to be excluded).  For $i=1,\ldots,5$, let us  denote by $\mathcal J^5_i$ the algebra with $\mathcal R^4_i$ as radical.  The  $\mathcal J^5_i$'s  are associative algebras of rank 3. 
 Explicit expressions for the adjoints of these algebras in the coordinate system  $(x,y,z,t,u)$ associated to the basis $(e,v_1,\ldots,v_4)$ are given in   T{\small{ABLE}} \ref{Tab:jordanP4} below. 
 \sk 

Let us now consider the case  $\dim R=3$. This case  is not more complicated than the other ones but requires several pages of elementary arguments of linear algebra that  are outlined in \cite[Section 5.0.4]{PIRIO}. Since these details do not present any real conceptual interest, we have decided to exclude them and  to state the corresponding results.   
  Let us define  $\mathcal J_6^5,\ldots,\mathcal J^5_{12}$ as the algebras whose multiplicative tables in a certain basis  denoted by $(e_1,e_2,a,b,d)$ are given in T{\small{ABLE}} 5 below. 
  The algebras $\mathcal J_k^5$'s for $k\in \{   6,\ldots,12\}$ are rank 3 Jordan algebras with 3-dimensional radicals. Moreover, any  Jordan algebra of this type is isomorphic to exactly one of these seven algebras. 
  \begin{table}[H]
\centering
\begin{tabular}{ccc}
\begin{tabular}{c}
  \begin{tabular}{c|c|c|c|c|c|}
 $\mathcal J_6^5$  & $e_1$ & $e_2$ &  $a$ & $b$ &  $d$      \\  \hline
$e_1$ & $e_1$   &   &$a $ & $\frac{1}{2}b$   &  $\frac{1}{2}d $   \\  \hline
$e_2$ &    &  $e_2$  & $  $ & $\frac{1}{2}b$   &  $\frac{1}{2}d $       \\  \hline
$a$ &   $a $ &$  $   &    &    &     \\  \hline
$b$ & $\frac{1}{2}b $   & $\frac{1}{2}b $  &    &    &     \\  \hline
$d$ & $\frac{1}{2}d $   &   $\frac{1}{2}d $&    &    &     \\  \hline
 \end{tabular} 
 \end{tabular}
 \quad & \quad 
  \begin{tabular}{c|c|c|c|c|c|}
$\mathcal J_7^5$   & $e_1$ & $e_2$ &  $a$ & $b$ &  $d$      \\  \hline
$e_1$ & $e_1$   &   &$a $ & $\frac{1}{2}b$   &  $\frac{1}{2}d $   \\  \hline
$e_2$ &    &  $e_2$  & $  $ & $\frac{1}{2}b$   &  $\frac{1}{2}d $       \\  \hline
$a$ &   $a $ &$  $   &    &    &     \\  \hline
$b$ & $\frac{1}{2}b $   & $\frac{1}{2}b $  &    & $a$     &     \\  \hline
$d$ & $\frac{1}{2}d $   &   $\frac{1}{2}d $&    &    &     \\  \hline
 \end{tabular}
 \quad & \quad 
    \begin{tabular}{c|c|c|c|c|c|}
 $\mathcal J_8^5$  & $e_1$ & $e_2$ &  $a$ & $b$ &  $d$      \\  \hline
$e_1$ & $e_1$   &   &$a $ & $\frac{1}{2}b$   &  $\frac{1}{2}d $   \\  \hline
$e_2$ &    &  $e_2$  & $  $ & $\frac{1}{2}b$   &  $\frac{1}{2}d $       \\  \hline
$a$ &   $a $ &$  $   &    &    &     \\  \hline
$b$ & $\frac{1}{2}b $   & $\frac{1}{2}b $  &    &  $a$  &     \\  \hline
$d$ & $\frac{1}{2}d $   &   $\frac{1}{2}d $&    &    &   $a$  \\  \hline
 \end{tabular}  \medskip    \bigskip\\
   \begin{tabular}{c|c|c|c|c|c|}
  $\mathcal J_9^5$  & $e_1$ & $e_2$ &  $a$ & $b$ &  $d$      \\  \hline
$e_1$ & $e_1$   &   &$a $ & $\frac{1}{2}b$   &  $\frac{1}{2}d $   \\  \hline
$e_2$ &    &  $e_2$  & $  $ & $\frac{1}{2}b$   &  $\frac{1}{2}d $       \\  \hline
$a$ &   $a $ &$  $   &    & $d$   &     \\  \hline
$b$ & $\frac{1}{2}b $   & $\frac{1}{2}b $  &  $d$  &    &     \\  \hline
$d$ & $\frac{1}{2}d $   &   $\frac{1}{2}d $&   $$ &    &     \\  \hline
 \end{tabular}
 \quad & \quad 
  \begin{tabular}{c|c|c|c|c|c|}
 $\mathcal J_{10}^5$   & $e_1$ & $e_2$ &  $a$ & $b$ &  $d$      \\  \hline
$e_1$ & $e_1$   &   &$a $ & $ b $   &  $\frac{1}{2}d $   \\  \hline
$e_2$ &    &  $e_2$  & $  $ & $  $   &  $\frac{1}{2}d $       \\  \hline
$a$ &   $a $ &$  $   &    &    &   $$  \\  \hline
$b$ & $  b$   & $   $  &    &    &     \\  \hline
$d$ & $\frac{1}{2}d $   &   $\frac{1}{2}d $&   $ $ &    &   $ $   \\  \hline
 \end{tabular}
 \quad & \quad 
  \begin{tabular}{c|c|c|c|c|c|}
  $\mathcal J_{11}^5$ & $e_1$ & $e_2$ &  $a$ & $b$ &  $d$      \\  \hline
$e_1$ & $e_1$   &   &$a $ & $ b $   &  $\frac{1}{2}d $   \\  \hline
$e_2$ &    &  $e_2$  & $  $ & $  $   &  $\frac{1}{2}d $       \\  \hline
$a$ &   $a $ &$  $   &    &    &   $$  \\  \hline
$b$ & $  b$   & $   $  &    &    &     \\  \hline
$d$ & $\frac{1}{2}d $   &   $\frac{1}{2}d $&   $ $ &    &   $a $   \\  \hline
 \end{tabular}     \medskip    \bigskip\\ 
  \quad & \quad 
   \begin{tabular}{c|c|c|c|c|c|}
  $\mathcal J_{12}^5$ & $e_1$ & $e_2$ &  $a$ & $b$ &  $d$      \\  \hline
$e_1$ & $e_1$   &   &$ a$ & $  b$   &  $  d $   \\  \hline
$e_2$ &    &  $e_2$  & $  $ & $  $   &  $ $       \\  \hline
$a$ &   $ a $ &$  $   &    &    &   $$  \\  \hline
$b$ & $ b $   & $   $  &    &    &     \\  \hline
$d$ & $ d$   &   $ $&   $ $ &    &   $  $   \\  \hline
 \end{tabular} 
   \quad & \quad 
  \end{tabular}
\caption{Multiplication tables for 5-dimensional rank 3 Jordan algebras with 3-dimensional radical (where an empty entry means that the corresponding product is equal to zero).}
\end{table}

Explicit expressions in the coordinate system  $(x,y,z,t,u)$ associated to the basis $(e_1,e_2,a,b,d)$ for the adjoints of the seven  algebras $\mathcal J_6^5,\ldots,\mathcal J^5_{12}$  are given in  T{\small{ABLE}} \ref{Tab:jordanP4} below.  
\sk

Let us assume now  $\dim R=2$. 
In this case, $ J_{\rm ss}$ is a semi-simple Jordan algebra of dimension 3 and of rank 2 or 3. Since it admits  a cubic norm, it is necessarily of rank 3 and  one can assume that   $ J_{\rm ss}=\mathbb C\times \mathbb C\times \mathbb C$.
 The standard basis $(e_1,e_2,e_3)$  of $J_{\rm ss}=\mathbb C^3$ gives an irreducible decomposition $e=e_1+e_2+e_3$  of the unity  by pairwise primitive orthogonal idempotents. Let us consider the corresponding Peirce decomposition 
  (\ref{E:PeirceDecomposition}) and  discuss according to the value of 
  $$\theta=\dim \big(R\cap \sum_{i<j }  J_{ij}\big)\in \{ 0,1,2  \}.$$

If $\theta=0$ then $ J=\oplus_{i=1}^3 J_{ii}$ so it follows from Proposition \ref{P:Peirce}
 that $ J$ is the direct product of the  $ J_{ii}$'s.  Since at least  one of these three Jordan algebras 
 has dimension $>1$, it follows   that the rank of $J$ is at least 
$1+1+2=4$, contradicting the assumption $\rk(J)=3$. Thus the case when $\theta=0$   does not occur.

Assume now that  $\theta=1$. One can suppose that $R={\rm Rad}(J_{ii})\oplus  J_{23}$ for a certain $i\in\{1,2,3\}$,  with ${\rm Rad}( J_{ii})$  of dimension 1.   
If  $i\in \{ 2,3\}$, say $i=3$, then set $ J'= J_{22}\oplus  J_{23}\oplus  J_{33}$. 
It follows from Proposition \ref{P:Peirce} that  $J'$ is a Jordan algebra since  $ J_{11}=\mathbb C\, e_1$. Moreover,  from $  J' \cdot  J_{11}=0$, it comes that $ J= J_{11} \times    J' $ for dimensional reasons.  Here $ J'$ is a rank 2 Jordan algebra of dimension 4 with a 2-dimensional radical. It can be proved that with respect to a suitable basis, the product of 
 $J'$ is given  by $(y,z,t,u)\cdot (y',z',t',u')=(yy'+zz',yz'+zy',yt'+ty',yu'+uy')$.
 Moreover, one has $e_2=\frac{1}{2}(1,1,0,0)$ and $e_3=\frac{1}{2}(1,-1,0,0)$  in the corresponding  coordinates.  But then easy computations show that  $  J'(e_j)=J_{jj}=\mathbb C\, e_j$  for $j=2,3$,  contradicting the assumption 
$\dim ( {\rm Rad}( J_{33}))=1$.  Thus this case does not occur. 

Let us now  assume that $i=1$, {\it i.e.} that $\dim {\rm Rad}( J_{ii})=1$.  As above, one proves that $ J= J_{11} \times    J' $, but now with $\dim  J_{11}=\dim   J'=2$. This would imply that $ J$ has rank $2+2=4$, excluding also this case.\sk

We now treat the case  $\theta=2$.  Then one has ${\rm Rad}( J_{ii})=0$ for $i=1,2,3$ so that $R= J_{12}\oplus  J_{13}\oplus J_{23}$.  Assume first that  no $ J_{ij}$'s (for $i<j$) has dimension 2.  
Thus one can assume that $J_{23}=0$ and that $J_{12}$ and $J_{13}$ are 1-dimensional. Then there exists $a,b$ such that $ J_{12}=\mathbb C\, a$,  $ J_{13}=\mathbb C\, b$. Since $R=J_{12}\oplus J_{13}$,  using 
Proposition \ref{P:Peirce},  one deduces easily that the multiplication table of $ J$ is the following:   
\begin{equation*}
  \begin{tabular}{c|c|c|c|c|c|}
   & $e_1$ & $e_2$ &  $e_3$ & $a$ &  $b$      \\  \hline
$e_1$ & $e_1$   &   &$  $ & $ \frac{1}{2}a $   &  $\frac{1}{2}b $   \\  \hline
$e_2$ &    &  $e_2$  & $  $ & $   \frac{1}{2}a $   &  $ $       \\  \hline
$e_3$ &   $ $ &$  $   &  $e_3$  &    &   $  \frac{1}{2}b $  \\  \hline
$a$ & $  \frac{1}{2}a $   & $ \frac{1}{2}a  $  &    &    &     \\  \hline
$b$ & $\frac{1}{2}b $   &   $ $&   $ \frac{1}{2}b$ &    &   $ $   \\  \hline
 \end{tabular}
\end{equation*}\smallskip 

This is the multiplication table of a rank 3 Jordan algebra that will be denoted by ${\mathcal J}_{13}^5$.  
The expression of the adjoint in the coordinate system associated to the basis $(e_1,e_2,e_3,a,b)$ of ${\mathcal J}_{13}^5$ is given in T{\small{ABLE}} \ref{Tab:jordanP4}.

Finally, assume that 
one of the spaces in the  decomposition $R= J_{12}\oplus  J_{13}\oplus  J_{23}$ (say $ J_{23}$) has dimension 2. 
Then $R=J_{23}$, $ J_{1j}=0$ for $j=2,3$ thus $J_{11}=\mathbb C\, e_1$. 
As above, one proves that  $ J$ is isomorphic to the direct product  $ J_{11} \times J'$.  Since $J$ has rank 3, $J'$ has rank 2 and $R= J_{23}={\rm Rad}(J')$ is 2-dimensional. Thus $J$ is isomorphic to the Jordan algebra 
${\mathcal J}_{14}^5=\mathbb C\times \mathcal J^4_{q,1}$. The associated Jordan adjoint (in standard coordinates) is given in T{\small{ABLE}} \ref{Tab:jordanP4}.\sk 
 
Let us now consider the case  $\dim R=1$.  In this case, $J_{\rm ss}$ has  rank  2 or 3. 
Moreover, since it admits a non-trivial cubic norm, it cannot be isomorphic to $\mathcal J^4_{q,3}$ (which does not admit any)  hence  it is necessarily of rank 3.  It follows that  one can assumes that $ J_{\rm ss}$  is the direct product $\mathbb  C\times ( \mathbb C\oplus W )$ where $\mathbb C\oplus W=\mathcal J^3_{q,2}$. Let $a$ be a non-trivial element of $R$. Since $a^3=0$ and $a^2\in R^2\subset R= \mathbb C\, a$, it follows that $a^2=0$.  Set $W=\mathbb C^2$ and let $(x,y,z,t,u) $ be the usual system of coordinates on $J=(\mathbb  C\times ( \mathbb C\oplus W ))\oplus \mathbb C\, a$ (these coordinates  are such that $(0,y,z,t,0)\cdot (0,\tilde y,\tilde z,\tilde t,0)=(0,y\tilde y+z\tilde z+t\tilde t,y\tilde z+z\tilde y,y\tilde t+t\tilde y,0)$ for every $y,\tilde y,z,\tilde z,t,\tilde t\in \mathbb  C$).  Then $e_1=(1,0,0,0,0)$, $e_2=\frac{1}{2}(0,1,1,0,0)$ and  $e_3=\frac{1}{2}(0,1,-1,0,0)$ are pairwise orthogonal primitive idempotents of $J$ such that $e=e_1+e_2+e_3$.

 The direct sum  $J'=\oplus_{i,j=2}^3 J_{ij}$ is a subalgebra of $J$. If $a\in J'$, one proves   that $J$ is isomorphic to the direct product of $J_{11}$ with $J'$.  If $\dim J_{11}>1$, we would have 
 $3={\rm rk}(J)={\rm rk}(J_{11})+{\rm rk}(J')\geq 2+2=4$, a contradiction. Hence $J_{11}\simeq \mathbb C$  and $J'$ is a  rank 2 Jordan of dimension  4 with a 1-dimensional  radical. Hence $J$ is isomorphic to the Jordan algebra $ \mathcal J_{15}^5=\mathbb C\times \mathcal J_{q,2}^4$ and there is  then no difficulty to get an explicit expression for the adjoint  in some coordinates  (see  T{\small{ABLE}} \ref{Tab:jordanP4} below).

Suppose now that   $a\in J_{11}\oplus J_{12}\oplus J_{13}$.  The case  $a\in J_{11}$ does not occur (this would imply that $J$ is isomorphic to $J_{11}\times J'$ hence we would have ${\rm rk}(J)={\rm rk}(J_{11})+{\rm rk}(J')\geq 2+2$, a contradiction)   so that   $a\in J_{12}\oplus J_{13}$.  By  exchanging $e_2$ and $e_3$, if necessary, one can assume that $a\in J_{12}$.  
Set $\tilde e=(0,0,0,1,0)\in J_{\rm ss}= (\mathbb  C\times ( \mathbb C\oplus W ))$. One verifies that $\tilde e\in J_{23}$ and that $\tilde e^2=e_2+e_3$. Moreover,  since $\tilde{e}\cdot a\in J_{23}\cdot J_{12}\subset J_{13}$ 
and since $J_{13}=0$ (for dimensional reasons),  it follows that $\tilde e\cdot a=0$.  From this one deduces that the multiplication table of $J$ in the basis  $(e_1,e_2,e_3,\tilde e,a)$ is the following:  
\begin{equation*}
  \begin{tabular}{c|c|c|c|c|c|}
   & $e_1$ & $e_2$ &  $e_3$ & $\tilde e$ &  $a$      \\  \hline
$e_1$ & $e_1$   &   &$  $ & $  $   &  $\frac{1}{2}a $   \\  \hline
$e_2$ &    &  $e_2$  & $  $ & $   \frac{1}{2}\tilde e $   &  $   \frac{1}{2}a$       \\  \hline
$e_3$ &   $ $ &$  $   &  $e_3$  &$\frac{1}{2}\tilde e$    &   $ $  \\  \hline
$\tilde e$ & $ $   & $ \frac{1}{2}\tilde e  $  & $ \frac{1}{2}\tilde e $   & $e_2+e_3$
   &     \\  \hline
$a$ & $\frac{1}{2}a $ &   $ \frac{1}{2}a$ &    &   $ $  &   $ $   \\  \hline
 \end{tabular}\mk
\end{equation*}

By direct computations, one verifies that the multiplicative product defined by this table does not satisfied the Jordan identity.  Thus the case when  $a\in J_{11}\oplus J_{12}\oplus J_{13}$ does not occur. 
\sk

Finally, when $\dim R=0$, $J$ is semi-simple hence  is isomorphic to the direct product $\mathcal J_{16}^5=\mathbb C\times \mathcal J^4_{q,3}$.
\mk

We have thus finally obtained the  classification of rank 3 Jordan algebras of dimension 5:

\begin{thm}
A rank 3 Jordan algebras of dimension 5 is isomorphic to exactly one of the  sixteen algebras $\mathcal J^5_k$'s described above.
\end{thm}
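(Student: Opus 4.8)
The plan is to organize the proof as an exhaustive case analysis according to the dimension $\rho=\dim R$ of the radical $R=\operatorname{Rad}(J)$, which can take the values $0,1,2,3,4$, since $\dim J=5$ and the semi-simple part $J_{\rm ss}=J/R$ must itself be a rank 3 semi-simple Jordan algebra (it carries a non-degenerate cubic norm because $J$ has rank 3). The backbone of every case is the Peirce decomposition attached to a complete system $e=e_1+\cdots+e_m$ of pairwise orthogonal primitive idempotents of $J_{\rm ss}$, lifted to $J$, together with the multiplication rules \eqref{E:Jii}--\eqref{E:Jij} and the formula \eqref{E:RJii} describing how $R$ sits inside the Peirce pieces. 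The key numerical constraint used repeatedly is that whenever $J$ splits as a direct product $J=J'\times J''$ of Jordan subalgebras, its rank is the sum $\operatorname{rk}(J')+\operatorname{rk}(J'')$; since we must have $\operatorname{rk}(J)=3$, this immediately rules out many configurations (any splitting into three or more nontrivial factors, or into two factors each of rank $\geq 2$).

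First I would dispose of the two extreme cases. When $\rho=0$ the algebra is semi-simple of dimension $5$ and rank $3$, and the classification of semi-simple Jordan algebras forces $J\cong\mathbb C\times\mathcal J^4_{q,3}$, giving $\mathcal J^5_{16}$. When $\rho=4$ the semi-simple part is one-dimensional, so $J$ is the unitalization of a $4$-dimensional nilalgebra; the nilindex is at most $3$ (since $\operatorname{rk}(J)=3$ forces $r^3=0$ for $r\in R$), so by the nilalgebra classification of Table~1 we obtain exactly the five associative algebras $\mathcal J^5_1,\dots,\mathcal J^5_5$, one per nilalgebra $\mathcal R^4_1,\dots,\mathcal R^4_5$. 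For the intermediate cases $\rho=1$ and $\rho=2$ the strategy, exactly as carried out in the dimension-4 analysis of Section~\ref{S:JordanDim4}, is to fix $J_{\rm ss}=\mathbb C\times\mathbb C\times\mathbb C$ (the only rank 3 semi-simple option once the cubic norm is required) or $J_{\rm ss}=\mathbb C\times\mathcal J^3_{q,2}$ when one off-diagonal Peirce piece is forced to be two-dimensional, and then to stratify by the invariant $\theta=\dim\bigl(R\cap\bigoplus_{i<j}J_{ij}\bigr)$. For each admissible value of $\theta$ one writes down the undetermined structure constants permitted by \eqref{E:Jii}--\eqref{E:Jij}, uses nilpotency of the left-multiplication operators $L_r$ ($r\in R$) to kill the diagonal-type constants, normalizes the remaining constants by rescaling basis vectors and applying automorphisms of $J_{\rm ss}$, and checks the Jordan identity; this pins down the surviving algebras ($\mathcal J^5_{13}$, $\mathcal J^5_{14}$ for $\rho=2$, and $\mathcal J^5_{15}$ for $\rho=1$) and eliminates the non-Jordan candidates.

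The most delicate and lengthy case is $\rho=3$, where $J_{\rm ss}=\mathbb C\times\mathbb C$ and the radical is three-dimensional: here the Peirce pieces $J_{11},J_{12},J_{22}$ admit many distinct distributions of the three radical dimensions, and after eliminating the product configurations by the rank-additivity constraint one is left with a genuine moduli problem for the bilinear maps $J_{ii}\times J_{ij}\to J_{ij}$ and the quadratic map $J_{ij}\to J_{ii}\oplus J_{jj}$, subject to the Jordan identity. This is precisely the computation that occupies several pages of \cite[Section 5.0.4]{PIRIO}, so rather than reproduce it I would invoke that reference to assert that the admissible normal forms are exactly the seven multiplication tables of $\mathcal J^5_6,\dots,\mathcal J^5_{12}$ displayed in Table~5, together with the fact that any rank 3 Jordan algebra with $\dim R=3$ is isomorphic to exactly one of them. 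The main obstacle throughout is bookkeeping rather than conceptual depth: one must be careful that the normalizations in distinct cases produce genuinely non-isomorphic algebras (so the list has no redundancy) and that no admissible structure constant has been overlooked; the Jordan identity must be verified, not merely assumed, for each candidate before it is admitted to the list. Collecting the outcomes of the five cases yields precisely the sixteen algebras $\mathcal J^5_1,\dots,\mathcal J^5_{16}$, with no repetitions, which is the assertion of the theorem.
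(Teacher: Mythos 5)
Your proposal follows essentially the same route as the paper: a case analysis on $\dim R\in\{0,1,2,3,4\}$ built on the Peirce decomposition and rank additivity for direct products, with the nilalgebra classification of Table~1 handling $\dim R=4$, the stratification by $\theta=\dim\bigl(R\cap\bigoplus_{i<j}J_{ij}\bigr)$ (plus elimination of candidates failing the Jordan identity) handling $\dim R\in\{1,2\}$, and an appeal to \cite[Section 5.0.4]{PIRIO} for the seven algebras with $\dim R=3$. This is precisely how the paper argues, including the decision to delegate the $\dim R=3$ computations to that reference, so the two proofs coincide in both structure and substance.
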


\subsubsection{Classification of quadro-quadric Cremona transformations of $\mathbb P^4$}
 Using the $JC$-equivalence, one deduces from 
 the previous result the complete classification of quadro-quadric Cremona transformations on $\mathbb P^4$.  Furthermore, one verifies that the Hilbert polynomials of the base locus schemes $\mathcal B_I,\mathcal B_{II}$ and $\mathcal B_{III}$ are respectively 
$$
 h_{I}=P_0-P_1+2P_2\,,  \qquad 
  h_{II}=- 2P_0  + 2P_1  + P_2
\qquad \mbox{ and }
\qquad h_{III}=  -5P_0+  5 P_1.  
 $$
In particular, these are distinct thus the type of a   quadro-quadric Cremona transformation of $\mathbb P^4$ given explicitly can be determined easily by computing  the Hilbert polynomial of its   base locus scheme (using the software {\it MacCaulay2} \cite{MC2} for instance).  
 Similarly, there is no difficulty to determine the multidegrees of such a $f$: 
the intersection of two distinct quadrics in $f^{-1}\lvert  \mathcal O_{\mathbb P^4}(1)  \lvert$ is the scheme theoretic  union of $\mathcal  B_f$ with another scheme whose  degree is  the integer $k$ such that  ${\rm mdeg}(f)=(2,k,2)$.

\begin{thm}\label{classP4}
A quadro-quadric Cremona transformation of  $\mathbb P^4$ is linearly equivalent to the projectivization of one of the the sixteen (pairwise non linearly equivalent) Jordan involutions in the following table: 
\end{thm}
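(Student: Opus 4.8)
The plan is to use the $JC$-correspondence established in \cite{PR2} as the engine that converts the algebraic classification into the sought-after geometric one. By that correspondence, the set of quadro-quadric Cremona transformations of $\mathbb P^4$ up to linear equivalence is in bijection with the set of $5$-dimensional rank $3$ Jordan algebras up to isomorphism. Since the immediately preceding theorem asserts that there are exactly sixteen such algebras $\mathcal J_1^5,\ldots,\mathcal J_{16}^5$ (pairwise non-isomorphic), the correspondence already yields that there are exactly sixteen linear-equivalence classes of quadro-quadric Cremona transformations of $\mathbb P^4$, each represented by the projectivization of the adjoint map of one $\mathcal J_k^5$. Thus the statement of Theorem \ref{classP4} reduces essentially to \emph{exhibiting} these sixteen involutions explicitly, i.e.\ to filling in the table.

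The concrete work is therefore the computation, for each algebra $\mathcal J_k^5$, of its generic trace $T$, generic norm $N$, and adjoint $x\mapsto x^\#=x^2-T(x)x+S(x)e$ in the distinguished coordinate system $(x,y,z,t,u)$ attached to the basis fixed in the classification (the standard basis for the semisimple summands, and $(e_1,e_2,a,b,d)$ or the analogous bases for the radical cases). First I would treat the associative cases $\mathcal J_1^5,\ldots,\mathcal J_5^5$ (the unitalizations of $\mathcal R_1^4,\ldots,\mathcal R_5^4$ from T{\small ABLE} 1), where the adjoint is the classical matrix/cofactor adjoint and the computation is immediate from the multiplication rules. Next I would handle the $\dim R=3$ families $\mathcal J_6^5,\ldots,\mathcal J_{12}^5$ using their multiplication tables in T{\small ABLE} 5, then the $\dim R=2$ cases $\mathcal J_{13}^5$ and $\mathcal J_{14}^5=\mathbb C\times\mathcal J_{q,1}^4$, the $\dim R=1$ case $\mathcal J_{15}^5=\mathbb C\times\mathcal J_{q,2}^4$, and finally the semisimple case $\mathcal J_{16}^5=\mathbb C\times\mathcal J_{q,3}^4$. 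For the product algebras $\mathbb C\times\mathcal J_{q,r}^4$ one uses the explicit formulas recorded in Section \ref{Notation} (for $x=(\lambda,w)\in\mathcal J_{q,r}^m$ one has $x^\#=(\lambda,-w)$ and $N(x)=\lambda^2+q(w)$), so that the adjoint of a direct product is simply the direct product of the adjoints; this dispatches $\mathcal J_{14}^5,\mathcal J_{15}^5,\mathcal J_{16}^5$ at once. In every case one checks the defining involutivity relation $(x^\#)^\#=N(x)\,x$, which both confirms the computation and reaffirms that the projectivized adjoint lies in ${\bf Bir}_{2,2}(\mathbb P^4)$.

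Once the sixteen adjoint maps are tabulated, I would complete the entries for the base-locus data, type, and multidegree. The \emph{type} is detected by the Hilbert polynomial of $\mathcal B_f$: since the three generic types $I,II,III$ of Theorem \ref{T:BV1} are distinguished by the computed Hilbert polynomials $h_I,h_{II},h_{III}$ displayed above, assigning each $\mathcal J_k^5$ to its type is a matter of computing the Hilbert polynomial of the ideal $\mathcal I_{\mathcal B(\mathcal J_k^5)}$ generated by the five quadrics (as noted, this is routinely done with {\it MacCaulay2} \cite{MC2}). The \emph{multidegree} $(2,m,2)$ is obtained, as indicated, by intersecting two general members of the homaloidal system $f^{-1}\lvert\mathcal O_{\mathbb P^4}(1)\rvert$ and reading off the residual degree $m$ after removing $\mathcal B_f$. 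Finally, the pairwise non-equivalence of the sixteen listed involutions is not an extra obligation but a direct consequence of the $JC$-correspondence together with the pairwise non-isomorphism of the $\mathcal J_k^5$, which is exactly what the previous theorem records.

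The conceptual content of the argument is thus entirely carried by the $JC$-correspondence and the preceding algebraic classification; the remaining labor is computational and case-by-case. The main obstacle I anticipate is not any single hard step but the sheer bookkeeping: one must fix a consistent coordinatization for each of the sixteen algebras, carry out the adjoint computation correctly from each multiplication table, and match the resulting base-locus schemes to the non-reduced punctual schemes $\tau,\xi,\eta,\chi$ described in Section \ref{Notation} so that the geometric entries of the table are accurate. The most delicate cases are those with nilpotent radical of dimension $3$ (the $\mathcal J_6^5,\ldots,\mathcal J_{12}^5$ of T{\small ABLE} 5), where the base locus acquires embedded or infinitely near structure and the correct scheme-theoretic description—rather than merely the set-theoretic one—requires care. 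Everything else follows mechanically from the results already established.
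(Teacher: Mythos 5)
Your proposal is correct and follows essentially the same route as the paper: the $JC$-correspondence turns the preceding classification of the sixteen $5$-dimensional rank $3$ Jordan algebras directly into the sixteen linear-equivalence classes of elements of ${\bf Bir}_{2,2}(\mathbb P^4)$ (with pairwise non-equivalence coming from pairwise non-isomorphism of the algebras), and the rest is the case-by-case computation of adjoints, with types detected by Hilbert polynomials of the base loci and multidegrees read off from intersections of two general homaloidal quadrics. This is exactly how the paper proceeds.
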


\begin{table}[H]
  \centering
  \begin{tabular}{|c|c|l|c|c|}\hline
  \label{Tab:toto}
   {\bf Algebra} &     ${\boldsymbol{\dim R}}$ &  \qquad \, {\bf Jordan involution} $\boldsymbol{(x,y,z,t,u)^\#}$     & {\bf  Type}& {\bf  Multidegree}     \\
   \hline   \hline
${\mathcal J}_{ 1}^5$   &     4& $ (   x^2 \, , \,   -xy   \, , \,   -xz    \, , \,  y^2-xt     \, , \,  2yz-xu )   $         &             $II$    &                       
$(2, 3  , 2)$  \\    \hline
${\mathcal J}_{ 2}^5$    &      4   &    $  (   x^2 \, , \,   -xy   \, , \,   -xz    \, , \,  y^2+z^2-xt     \, , \,  2yz-xu )  $          &             $III$             &    $(2, 4 , 2)$        \\   \hline
  ${\mathcal J}_{3}^5$ &         4 &$  (     x^2 \, , \,   -xy   \, , \,   -xz    \, , \,  -xt     \, , \,  y^2-xu    )  $          &             $I$ &               
  $(2, 2 , 2)$ \\   \hline
    ${\mathcal J}_{ 4}^5$ &      4 &   $   (  x^2 \, , \,   -xy   \, , \,   -xz    \, , \,  -xt     \, , \,   y^2+z^2-xu )   $         &             $I$     &     $(2, 2 , 2)$       \\   \hline
    ${\mathcal J}_{5}^5$ &      4  &    $  (  x^2 \, , \,   -xy   \, , \,   -xz    \, , \,  -xt     \, , \,  y^2+z^2+t^2-xu    )  $          &             $I$    &      $(2,2  , 2)$       \\    \hline \hline
    ${\mathcal J}_{6}^5$  &    3 &    $   (   xy \, ,\,  x^2 \, , \,  -yz\, , \, -xt\, , \, -xu   ) $     &       $I$   & $(2, 2 , 2)$ \\    \hline
      ${\mathcal J}_{7}^5$ & 3  &    $   (   xy \, ,\,  x^2 \, , \, t^2 -yz\, , \, -xt\, , \, -xu     ) $     &       $I$  & $(2, 2 , 2)$  \\    \hline  
       ${\mathcal J}_{8}^5$ &    3  &    $   (   xy \, ,\,  x^2 \, , \, t^2 +u^2-yz\, , \, -xt\, , \, -xu     ) $     &       $I$  & $(2,2  , 2)$  \\    \hline   
      ${\mathcal J}_{ 9}^5$&    3   &    $   (  xy \, ,\,  x^2 \, , \,  -yz\, , \, -xt\, , \, 2zt-xu     ) $     &       $II$  & $(2,3  , 2)$ \\    \hline  
     ${\mathcal J}_{ 10}^5$  &    3&    $   (   xy \, , \,   x^2   \, , \,   -yz    \, , \,  -yt     \, , \,  -xu     ) $     &       $II$   &  $(2, 3 , 2)$\\    \hline 
     ${\mathcal J}_{ 11}^5$  &    3&    $   ( xy \, , \,   x^2   \, , \,   u^2-yz    \, , \,  -yt     \, , \,  -xu     ) $     &       $III$    & $(2, 4 , 2)$ \\    \hline 
${\mathcal J}_{ 12}^5$ &    3 &    $   (    xy \, , \,   x^2   \, , \,   -yz    \, , \,  -yt     \, , \, -yu     ) $     &       $I$   & $(2, 2 , 2)$ \\    \hline  \hline
${\mathcal J}_{13}^5$  &    2 &    $   (    yz \, , \, xz \, , \, xy\, , \, -zt\, , \, -yu    ) $     &       $II$   &   $(2, 3 , 2)$\\    \hline 
${\mathcal J}_{14}^5$   &    2&    $   (    y^2+z^2 \, , \,   xy   \, , \,   -xz    \, , \,  -xt     \, , \, -xu    ) $     &       $I$   & $(2, 2 , 2)$  \\    \hline  \hline
${\mathcal J}_{15 }^5$ &    1  &    $   (       y^2+z^2+t^2 \, , \,   xy   \, , \,   -xz    \, , \,  -xt     \, , \, -xu     ) $     &       $ I   $     &   $(2, 2 , 2)$ \\    \hline  \hline
${\mathcal J}_{16}^5
   $   &    0 &    $   (    y^2+z^2+t^2+u^2 \, , \,   xy   \, , \,   -xz    \, , \,  -xt     \, , \, -xu    ) $     &       $I$  & $(2, 2 , 2)$ \\    \hline 
 \end{tabular}
\caption{Classification of quadro-quadric Cremona transformations of $\mathbb P^4$.}\label{Tab:jordanP4}
\end{table}

The classification above is  completely explicit but does not say much about the geometry of the corresponding Cremona transformations. In the next subsections, for each Jordan involution $f_{\mathcal J_k^5}$ in this table, we describe as geometrically   (and so little schematically) as possible  its base locus scheme $\mathcal B(\mathcal J^5_k)$ as well as  the   linear system  of quadric hypersurfaces 
$$\big\lvert  \mathcal I_{B(\mathcal J^5_k)}(2) \big\lvert=f_{\mathcal J_k^5}^{-1}\big\lvert  \mathcal O_{\mathbb P^4}(1) \big\lvert.$$

\subsubsection{Tables  of quadro-quadric Cremona transformations of $\mathbb P^4$}
We will deal with the three types $I$, $II$ and $III$ separately. The results are collected  in some tables below according to the  type in a  similar way to the classification tables of Cremona transformations of type $(2,d)$ of $\mathbb P^3$ obtained in  \cite{PRV}. 
When  possible, we also offer graphic representations of the associated  base locus schemes. These drawings could be helpful to understand better the various cases geometrically, especially the most degenerated ones.
\medskip 

Some  Cremona maps of type II or III are rather complicated since their  base locus schemes  have a non-reduced irreducible component whose schematic structure is more subtle than for type I.
Before presenting the aforementioned tables of Cremona transformations, we introduce some terminology to deal with  some non-reduced schemes of dimension 1.
\medskip

  \paragraph{$-$ {\it Double structures on a line  in $\mathbb P^3$.}}  
  Let $\mathscr L$ be a double structure supported on a line 
  $\ell=\mathscr L_{\rm red}\subset \mathbb P^3$ (see \cite[\S 1]{PanRusso}): the associated ideal sheaf $\mathcal I_{\mathscr L}$ verifies $(\mathcal I_\ell)^2\subset 
\mathcal I_{\mathscr L} \subset \mathcal I_\ell$ and is such that the sheaf $\mathcal J_{\mathscr L}=
\mathcal I_{\ell}/\mathcal I_{\mathscr L}$ is invertible.  Thus there is a surjection $\mathcal I_\ell/(\mathcal I_\ell)^2\rightarrow \mathcal J_{\mathscr L}$ to which it corresponds a unique section $\sigma_{\mathscr L}\in H^0(\ell,N_{\ell/\mathbb P^3})$.  It defines a line bundle on $\ell\simeq \mathbb P^1$ whose degree $\delta(\mathscr L)\geq 0$ is projectively attached to $ \mathscr L\subset \mathbb P^3$.  In fact, it is the unique invariant of such a scheme since setting $\delta=\delta(\mathscr L)$  for simplicity,  one knows  (cf. \cite[page 136]{EisenbudHarris}  for instance) that    $\mathscr L$ is projectively  isomorphic to
$$ \mathscr L_{\delta}={\rm Proj}\bigg( \frac{\mathbb C[a,b,d,c]}{(a^2,ab,b^2,ad^\delta-bc^\delta )}\bigg) \subset \mathbb P^3. $$

One can describe geometrically $ \mathscr L_{\delta}$ as the union of the  rational family of punctual schemes $\{\tau_x\}_{x\in \ell}$, all isomorphic to $\tau\subset \mathbb P^3$ and such that the projectivized tangent space   $T_x \tau_x$ is {\it normal} to $\ell$ for  every $x$.  As mentioned  in \cite{EisenbudHarris}, $\mathscr L$ can be thought as $\ell$ `plus a projective line $\mathcal L$ infinitely close to it', the integer $\delta$ specifying how fast $\mathcal L$ `twists along $\ell$' in $\mathbb P^3$. When $\delta=0$,  there is no twisting  at all so that, using the terminology introduced in Section \ref{Notation}, $\mathscr L_0$ is nothing but a 2-multiple line in
a 2-plane, {\it i.e.} 
$\mathscr L_0\simeq {\rm Proj}(\mathbb C[a,b,c]/(a^2))\subset \mathbb P^2$.  This contrasts with the general case  since  $\langle \mathscr L_\delta\rangle=\mathbb P^3  $ 
for every positive integer  $\delta$.\mk 

In what follows, we will consider only the schemes $\mathscr L_\delta$ for 
$\delta=0,1$, linearly embedded in $\mathbb P^4$. Based on the above discussion,  it is reasonable to  represent these two schemes as: a line plus  another one glued along the first, in a rectilinear way for $\delta=0$; twisted one time when $\delta=1$: 

  \begin{figure}[H]
\begin{center}
\begin{tabular}{ccc}
\begin{tabular}{c}
\psfrag{L0}[][][1]{$ \;\;\,    \scriptstyle{\mathscr L_0}  $}
\psfrag{<L0>}[][][0.8][39]{$ \quad   \scriptstyle{\langle \mathscr L_0 \rangle}  $}
 \includegraphics[width=6cm,height=5cm]{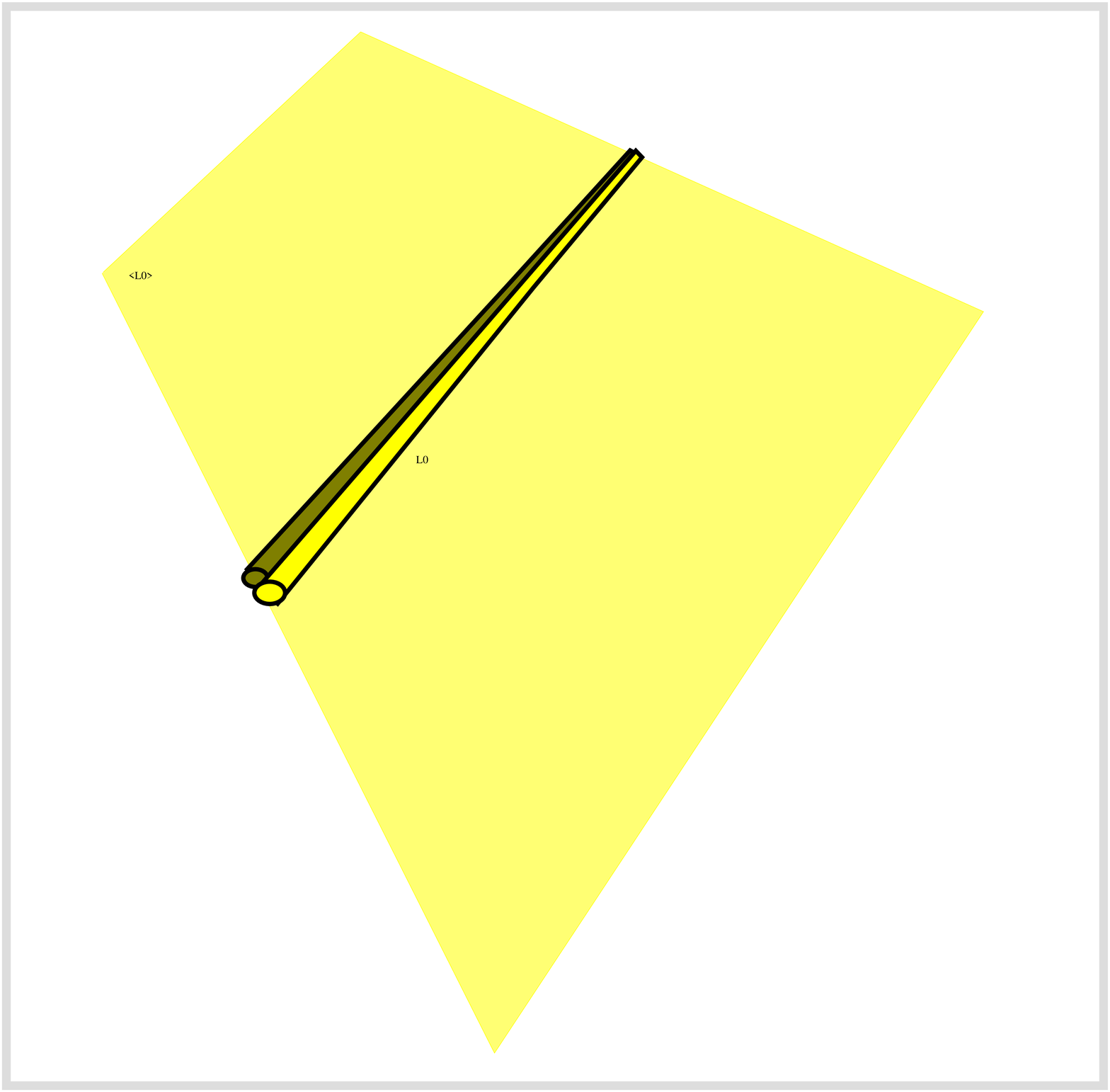} \\
 Pictural representation of  $\mathscr L_0 \subset \mathbb P^4$  \\ (the 
    2-plane $\langle \mathscr L_0 \rangle$ is pictured in yellow)
 \end{tabular}
 &  &
 \begin{tabular}{c}
\psfrag{L0}[][][1]{$ \;\;\,    \scriptstyle{\mathscr L_1}  $}
\psfrag{<L0>}[][][0.8][39]{$ \quad \;    \scriptstyle{\langle \mathscr L_1 \rangle}  $}
 \includegraphics[width=6cm,height=5cm]{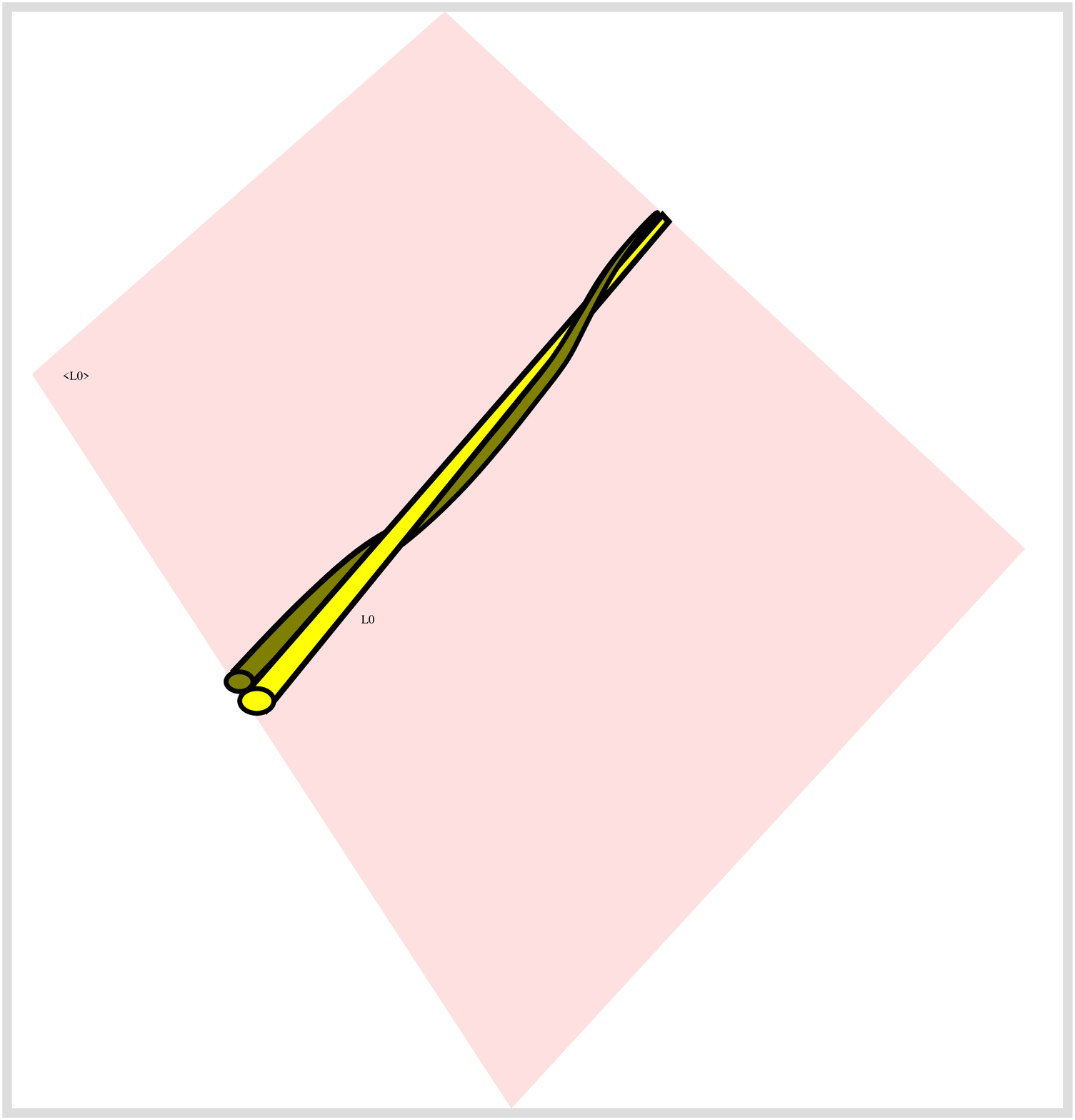}\\
 Pictural representation of  $\mathscr L_1\subset \mathbb P^4$  \\  (the   
 hyperplane  $\langle \mathscr L_1 \rangle $ is pictured in 
   pink)
 \end{tabular}
\end{tabular}
\end{center}
\end{figure}

\paragraph{\it $-$ The non-reduced degenerate  rational quartic curve $\mathscr L^4 \subset \mathbb P^4$.}

Let 
$$
 \mathscr L^4={\rm Proj}\Big(  \mathbb C[x,y,z,t,u]
/  (  x^2 \, , \,   xy   \, , \,   xz    \, , \,  y^2-xt     \, , \,  2yz-xu, z^2)
 \Big)\subset \mathbb P^4.
$$

The scheme $\mathscr L^4$ is irreducible and has dimension 1.  It is an arithmetically Cohen-Macaulay (briefly ACM) non reduced curve of degree 4 having arithmetic genus 0 and  supported on the line $\ell=\mathscr L^4_{\rm red}=V(x,y,z)\subset \p^4$.  For any point $p\in \ell$, the projectivized tangent space  $T_p \mathscr L^4$ of $\mathscr L^4$ at $p$ is the coordinate hyperplane $H=V(x)$. The (schematic) intersection $\mathscr L^4\cap H$ is the scheme defined by the homogeneous ideal  $(x,y^2,yz,z^2)$: it is a double-line in $H$.  The residual intersection of $H$ with $\mathscr L^4$ is the reduced line $\ell$. \sk

A natural question is to know if $\mathcal L^4$ is a degeneration of  a  rational normal quartic curve $C_4=v_4(\p^1)\subset \mathbb P^4$, which has the same Hilbert polynomial.
This should follow from \cite[Proposition 2.2]{MD-P} but  the algorithm presented there for producing  a  smoothing of  the  scheme  $\mathscr L^4$ does not work because in this case 
 a non flat family appears\footnote{Actually, the deformation $\mathcal C$ given in part 2) of the proof of  \cite[Proposition 2.2]{MD-P}  is even not equidimensional in general.}.   Thus as far as we know, till now it  is unknown if $\mathcal L^4$ belongs to the irreducible component of the Hilbert scheme containing $C_4$. This shows how subtle the analysis of the most degenerated examples can be.
\bk 

In the next pages, the reader will find the aforementioned  tables of quadro-quadric Cremona transformations of $\p^4$ as well as the corresponding graphic representations of the associated  base locus schemes. \sk 

We begin by considering the quadro-quadric Cremona transformations of type I. The descriptions of the linear systems in 
T{\small{ABLE}} 7 is complete and very similar to the ones  of  `Tableau 1' of \cite{PRV}\footnote{Note however
 that  the description  of the linear system of type ${\rm tan}^{[3]}(/\!/)$ is not correct in \cite{PRV}.}.
\mk 

Then we consider the case of quadro-quadric Cremona transformations of type II.  

All the associated linear systems of quadrics are described in T{\small{ABLE}} 8, with the exception of $\lvert  \mathcal I_{\mathcal B(\mathcal J^5_1)} \lvert$ whose description is too long to be put in the table. In order to describe it geometrically,  let's introduce the following terminology: if $C$ is a curve included in the smooth locus of a surface $S \subset \mathbb P^4$, we will say that a hypersurface $\mathcal H\subset \p^4$ {\it osculates normally $S$ along $C$} if any generic  subscheme $\eta\subset S$  normal to $C$ is also a subscheme of $\mathcal H$.  More concretely, this means that for 
$c\in C$ generic and any regular germ of curve $\mu: (\mathbb C,0)\rightarrow (S,c)$  whose image is transverse to $C$, the  composition  $h\circ \mu$ has valuation  at least 2 at 0 if $h$ is a generator of $\mathcal O_{\mathcal H,c}$. \sk 

Now let $\ell$ be  a fixed line   of the  ruling of a rational normal scroll $S= S_{1,2}$ in $\mathbb P^4$ and  denote by $\pi$ the 2-plane  spanned by $\ell$ and the directrix  line of $S$.  Then up to projective equivalence, 
 \begin{equation}
\label{E:LinearSystemJ1}
 \begin{tabular}{l}
 {\it the homaloidal linear system  $\lvert  \mathcal I_{\mathcal B(\mathcal J^5_1)} \lvert$ is formed by quadric hypersurfaces   in $\p^4$ } \\ {\it     containing the 2-plane $\pi$ and osculating  normally the rational normal scroll $S$ along  $\ell$.}
\end{tabular} \end{equation}
 
 This claim can be verified by easy direct computations\footnote{To recover exactly the linear system $\langle x^2, xy , xz , y^2-xt , 2yz-xu, z^2 \rangle$ 
 appearing in  
 Table 8, one has to take  $S=V(y^2-xt, 2zt-yu,2yz-xu)$ and $\ell=V(x,y,z)$.}. 
\smallskip 

The base locus schemes of type II are pictured in F{\small{IGURE}} 6, at the exception of $\mathcal B(\mathcal J_1^5)$: we have not been able to figure a way to represent this scheme. 
\newpage 

\begin{landscape}
\begin{table}
\begin{center}
\scalebox{0.90}{
  \begin{tabular}{|c|c|c|c|c|c|}\hline
  {\bf Algebra}  $\boldsymbol{\mathcal J}    $ & 
  {\bf  Jordan adjoint}  $\boldsymbol{(x,y,z,t,u)^\#}$
     &  {\bf Primary decomposition of} $ {\boldsymbol{ \mathcal I_{
     {\mathcal B(\mathcal J)}
 }}}$     &  {\bf Geometrical description of}  $ {\boldsymbol{ {\mathcal B(\mathcal J)}}}$    & {\bf  Linear system}  $ 
      {\boldsymbol{ |  \mathcal I_{\mathcal B(\mathcal J)}(2) | }}  $     \\
   \hline  \hline
        $\mathcal J_{3}^5$ & $   (   x^2 \, , \,   -xy   \, , \,   -xz    \, , \,  -xt     \, , \,  y^2-xu     ) $     
      & $ (x,y^2)  \cap    (t,z,xy,x^2,y^2-xu)$ &
 \begin{tabular}{l}   \vspace{-0.3cm}\\
 A rank 1 quadric surface  $S$ plus 
 a  \\ scheme  $\eta$  
 supported   on   $S_{\rm red}$ 
 such \\  that
 $\langle \eta \rangle \cap \langle S\rangle$ is the line tangent to $\eta$\\   that is 
 moreover  transverse to   $S_{\rm red}$  \vspace{0.15cm}
 \end{tabular}
          &     \begin{tabular}{l} 
          Rank 3 quadric hypercones   contain- \\ -ing $S_{\rm red}$ with  vertex   line  included  \\ in $S_{\rm red}$  
        and   osculating  along $\eta$    
     \end{tabular}     
      \\  \hline 
           $\mathcal J_{4}^5$ & $   ( x^2 \, , \,   -xy   \, , \,   -xz    \, , \,  -xt     \, , \,   y^2+z^2-xu     ) $     
      &  
      \begin{tabular}{c}  
      $(x,y-i\,z)  \cap      (x,y+i\, z)$ \vspace{0.15cm}\\ 
     $ \cap    \,  (t,z,xy,x^2,y^2-xu)$   
       \end{tabular} 
      &
 \begin{tabular}{l}  \vspace{-0.3cm}\\
 A rank 2 quadric surface $S$ plus a \\
   scheme $\eta$  
 supported   on the line    $S_{\rm sing}$  \\ 
 such that 
 $\langle \eta \rangle \cap \langle S\rangle$ is the line tangent \\  to $\eta$  that is 
  not contained in  $S$  \vspace{0.15cm}
 \end{tabular}
          &     \begin{tabular}{l}  Hyperquadrics  
        containing  \\ $S$  and osculating  along $\eta$
     \end{tabular}    
          \\  \hline 
      $\mathcal J_{5}^5$ & $   (    x^2 \, , \,   -xy   \, , \,   -xz    \, , \,  -xt     \, , \,  y^2+z^2+t^2-xu  ) $     
      &
      \begin{tabular}{l}
       $(x,y^2+z^2+t^2)\,  \cap  $ \vspace{0.15cm}\\ $      (t,z,xy,x^2,y^2-xu)$   
       \end{tabular}&
 \begin{tabular}{l} 
  \vspace{-0.3cm}\\
 A rank 3   quadric surface $S$ plus 
 a \\ scheme $\eta$  
 supported   at  the vertex   \\ of   $S$ 
 such that 
 $\langle \eta \rangle \cap \langle S\rangle$  is the line  \\Êtangent to $\eta$  that is 
  not included in   $S$ \vspace{0.15cm}
 \end{tabular} 
          &     \begin{tabular}{l}  Hyperquadrics  
        containing  \\ $S$   and osculating  along $\eta$ 
         \end{tabular}          \\  \hline 
   $\mathcal J_{6}^5$ & $   (     xy \, ,\,  x^2 \, , \,  -yz\, , \, -xt\, , \, -xu   ) $     
   &  $(x,y)  \cap (x,z)  \cap      (y,t,u,x^2)$
     &
 \begin{tabular}{l} 
   \vspace{-0.3cm}\\
 A rank 2  quadric surface $S$ 
 plus a \\   scheme  $\tau$  supported  at a point 
  $p\in S_{\rm reg}$ \\ and 
 spanning a line transverse to $\langle S\rangle $ \vspace{0.15cm}
 \end{tabular} 
          &     \begin{tabular}{l} Hyperquadrics 
     containing   $S$  \\ and tangent 
     to $\langle T_p S,\tau \rangle$ at $p$
     \end{tabular}  
             \\  \hline   
$\mathcal J_{7}^5$ &     $   (   xy \, ,\,  x^2 \, , \, t^2 -yz\, , \, -xt\, , \, -xu     ) $     
&  $(x,t^2-yz)  \cap     (x^2,y,u,xt,t^2)$  &
 \begin{tabular}{l} 
   \vspace{-0.3cm}\\
 A rank 3  quadric surface $S$ 
 plus a \\  scheme   $\xi$
 supported at  a point  $p\in  S_{\rm reg}$ \\  
 such that  
 $\langle  \xi \rangle$  cuts 
 $\langle S\rangle$    along a line   tan- \\ -gent  to    $S$ at $p$ but 
 not included in $S$  \vspace{0.15cm}
  \end{tabular}
      & 
   \begin{tabular}{l} Hyperquadrics 
     containing   $S$ \\   and tangent 
     to $\langle  T_p S,  \xi \rangle$ at $p$ 
     \end{tabular} 
       \\    \hline 
$\mathcal J_{8}^5$  &     $   (   xy \, ,\,  x^2 \, , \, t^2 +u^2-yz\, , \, -xt\, , \, -xu      ) $     
& $(x,t^2+u^2-yz)  \cap  (x^2,y,u,xt,t^2)$ &
 \begin{tabular}{l} 
   \vspace{-0.3cm}\\
A  smooth  quadric surface $S$  
 plus  a   \\  scheme  $\xi$  
 supported    at a point $p\in    S$ \\ such  that  
 $\langle  \xi \rangle$  intersects 
 $\langle S\rangle$    along a line   \\ tangent    to  $S$ at $p$ but 
 not included in $S$  \vspace{0.15cm}
 \end{tabular}          &   
 \begin{tabular}{l} Hyperquadrics 
     containing   $S$ \\  and  tangent 
     to $\langle  T_p S,  \xi \rangle$ at $p$ 
     \end{tabular} 
      \\    \hline 
$\mathcal J_{12}^5$ &$   (    xy \, , \,   x^2   \, , \,   -yz    \, , \,  -yt     \, , \, -yu     ) $     
&  $ ( x^2,y)  \cap ( x,z,t,u   )  $ &
 \begin{tabular}{l} A  double plane  $S$  in  a   $\mathbb P^3$ \\
 plus a point $p$ outside $\langle S\rangle$\end{tabular} 
          &     \begin{tabular}{l} 
            \vspace{-0.3cm}\\
          Hyperquadrics tangent   to
             $\langle S\rangle $     \\   along $S_{red}$  and 
           passing through    $ p $  \vspace{0.15cm}
     \end{tabular}
      \\  \hline   
$\mathcal J_{14}^5$  &    $   (    y^2+z^2 \, , \,   xy   \, , \,   -xz    \, , \,  -xt     \, , \, -xu    ) $     
& $ ( x,y^2+z^2 )  \cap ( y,z,t,u   ) $   &
\begin{tabular}{l} 
  \vspace{-0.3cm}\\
A  rank 2 quadric  surface  $S$  \\
 plus a point $p$ outside $\langle S\rangle$
  \vspace{0.15cm}
 \end{tabular}          &   
 \begin{tabular}{l}
   \vspace{-0.3cm}\\
    Hyperquadrics in $\p^4$\\ 
     containing   $S\cup \{p \}$ \vspace{0.15cm}
     \end{tabular}  
      \\    \hline 
$\mathcal J_{15 }^5$&       $   (       y^2+z^2+t^2 \, , \,   xy   \, , \,   -xz    \, , \,  -xt     \, , \, -xu     ) $     
&  $ ( x,y^2+z^2+t^2 )  \cap ( y,z,t,u   ) $    &
\begin{tabular}{l}   \vspace{-0.3cm}\\
A  quadric cone  $S$ in $ \p^3$\\
 plus a point $p$ outside $\langle S\rangle$
 \vspace{0.15cm}
 \end{tabular}          &      
 \begin{tabular}{l} 
   \vspace{-0.3cm}\\
   Hyperquadrics in $\p^4$\\ 
     containing   $S\cup \{p \}$\vspace{0.15cm}
     \end{tabular} 
          \\    \hline 
$\mathcal J_{16}^5$ &      $   (    y^2+z^2+t^2+u^2 \, , \,   xy   \, , \,   -xz    \, , \,  -xt     \, , \, -xu    ) $     
&  $ ( x,y^2+z^2+t^2+u^2 )  \cap ( y,z,t,u   ) $  &
 \begin{tabular}{l} 
   \vspace{-0.3cm}\\
 A smooth quadric surface $S$\\
 plus a point $p$ outside $\langle S\rangle$\vspace{0.15cm}
     \end{tabular}        &   \begin{tabular}{l} 
       \vspace{-0.3cm}\\
     Hyperquadrics in $\p^4$\\ 
     containing   $S\cup \{p \}$ \vspace{0.15cm}
     \end{tabular}     \\     \hline 
         \end{tabular}}
\end{center}\bigskip
\caption{Classification of elementary quadro-quadric Cremona transformations of $\mathbb P^4$ (Type I).}
\end{table}
\end{landscape}

\begin{figure}[H]
\begin{center}
\begin{tabular}{ccc}
\begin{minipage}{4.0cm}
\begin{tabular}{c}
\psfrag{eta}[][][1]{$\;\;\;    \scriptstyle{\boldsymbol{\eta}} \; \; $}
\psfrag{S}[][][1]{$ \; \, \scriptstyle{\boldsymbol{S}} $}
\psfrag{<S>}[][][1]{}
\includegraphics[width=4.4cm,height=4.0cm]{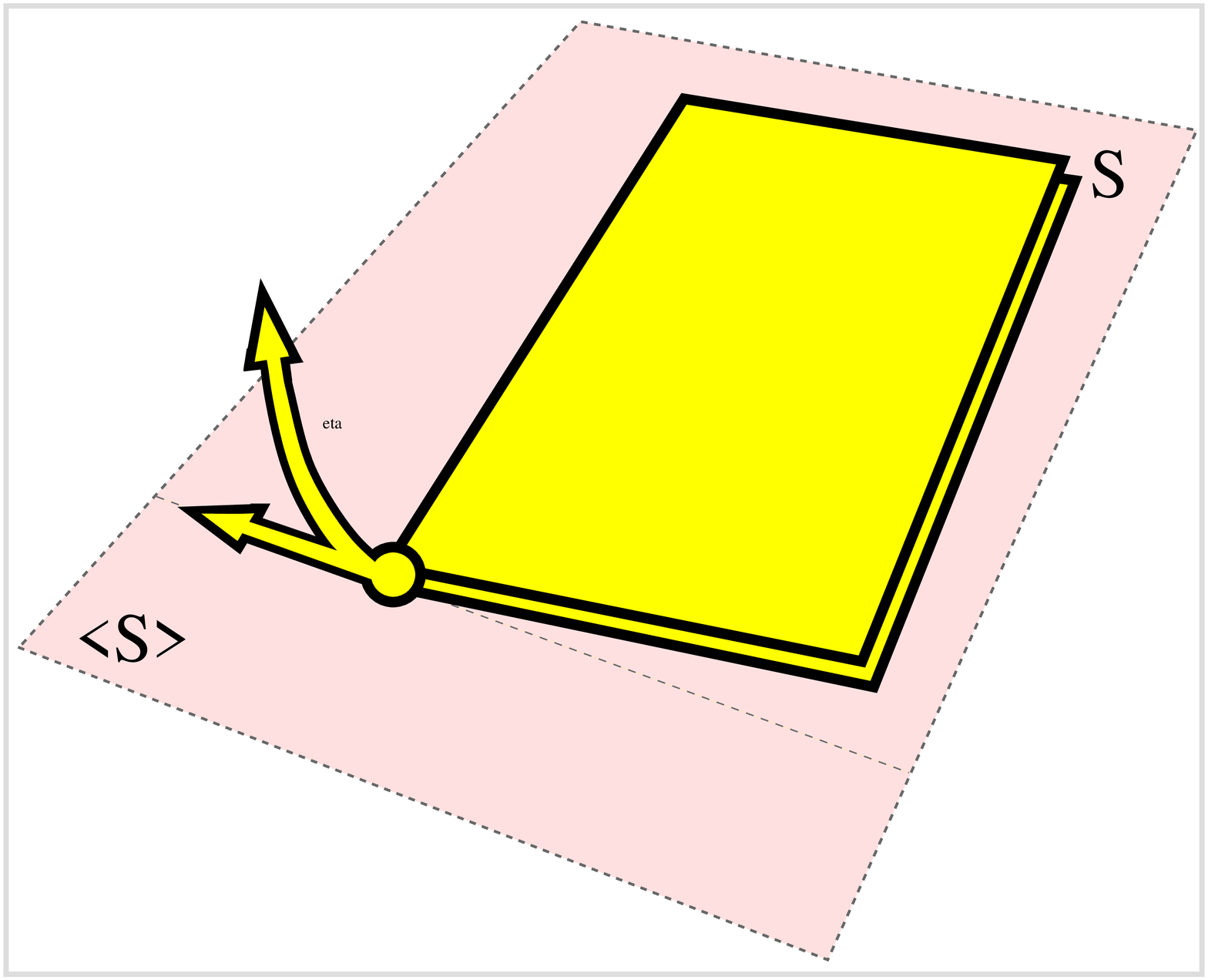} \\
\small{Base locus scheme $\mathcal B(\mathcal J_{3}^5)$}
\end{tabular}
\end{minipage} \quad 
&
 \quad 
 \begin{minipage}{4.0cm}
\begin{tabular}{c}
\psfrag{eta}[][][1]{$\,  \scriptstyle{\boldsymbol{\eta}} \; \; $}
\psfrag{S}[][][1]{$\, \,  \scriptstyle{\boldsymbol{S}} $}
\psfrag{<S>}[][][1]{}
\includegraphics[width=4.4cm,height=4.0cm]{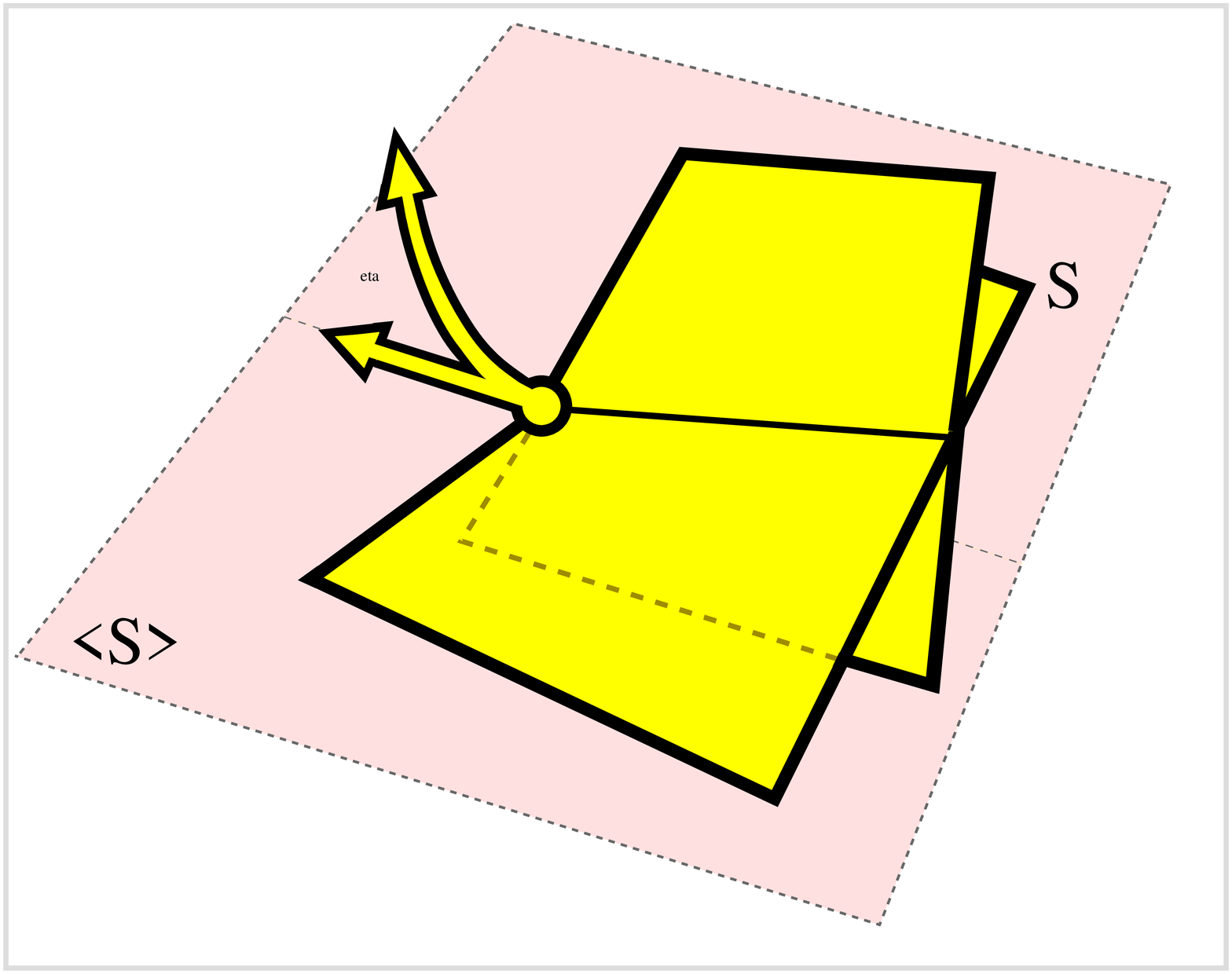} \\
\small{Base locus scheme $\mathcal B(\mathcal J_{4}^5)$}
\end{tabular}
\end{minipage}
\quad 
&  
\quad 
\begin{minipage}{4.0cm}
\begin{tabular}{c}
\psfrag{eta}[][][1]{$  $}
\psfrag{eta2}[][][1]{$\,  \scriptstyle{\boldsymbol{\eta}} \; \; $}
\psfrag{S}[][][1]{$\;  \scriptstyle{\boldsymbol{S}} $}
\psfrag{P}[][][1]{$\;\;\,   \scriptstyle{p} $}
\psfrag{<S>}[][][1]{}
\includegraphics[width=4.4cm,height=4.0cm]{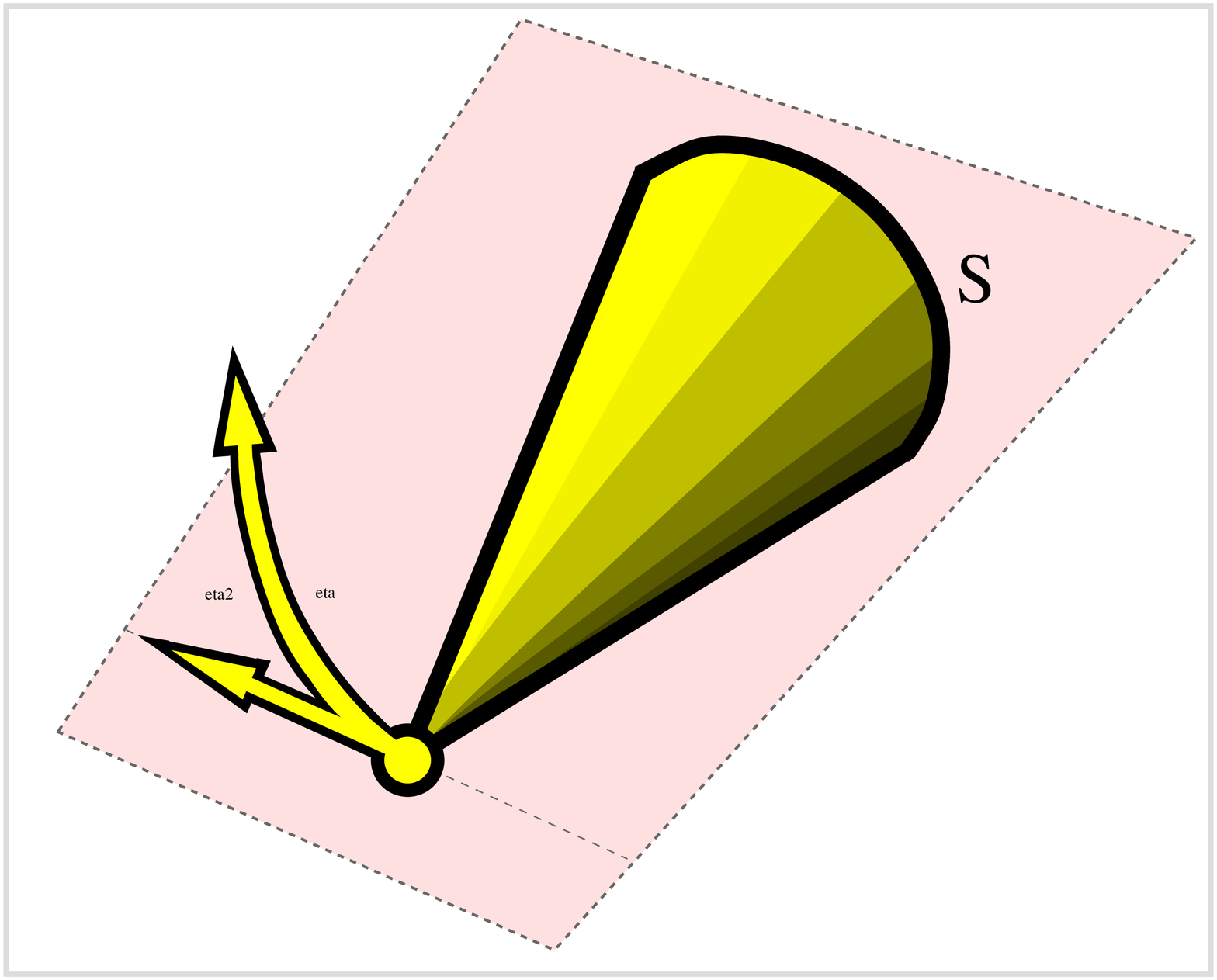} \\
\small{Base locus scheme $\mathcal B(\mathcal J_{5}^5)$}
\end{tabular}
\end{minipage}
\vspace{0.5cm}\\
\begin{minipage}{4.0cm}
\begin{tabular}{c}
\psfrag{tau}[][][1]{$ \scriptstyle{\boldsymbol{\tau}} \; $}
\psfrag{S}[][][1]{$ \; \scriptstyle{\boldsymbol{S}} $}
\psfrag{<S>}[][][1]{}
\includegraphics[width=4.4cm,height=4.0cm]{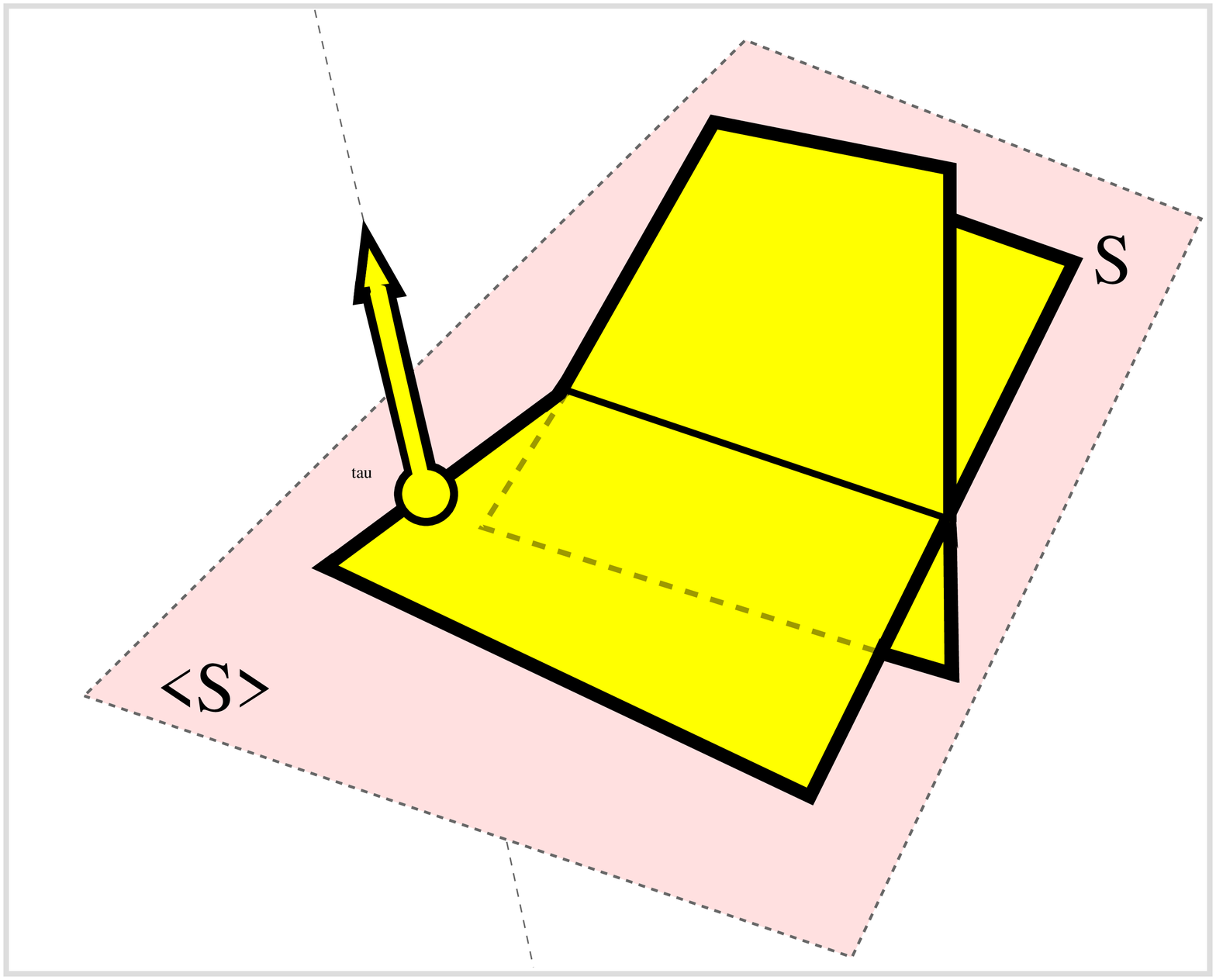} \\
\small{Base locus scheme $\mathcal B(\mathcal J_{6}^5)$}
\end{tabular}
\end{minipage} \quad 
&
 \quad 
 \begin{minipage}{4.0cm}
\begin{tabular}{c}
\psfrag{xi}[][][1]{$\,  \scriptstyle
{\boldsymbol{\xi}} \; \; $}
\psfrag{S}[][][1]{$\,  \scriptstyle{\boldsymbol{S}} $}
\psfrag{P}[][][1]{$\;\;\,   \scriptstyle{p} $}
\psfrag{<S>}[][][1]{}
\includegraphics[width=4.4cm,height=4.0cm]{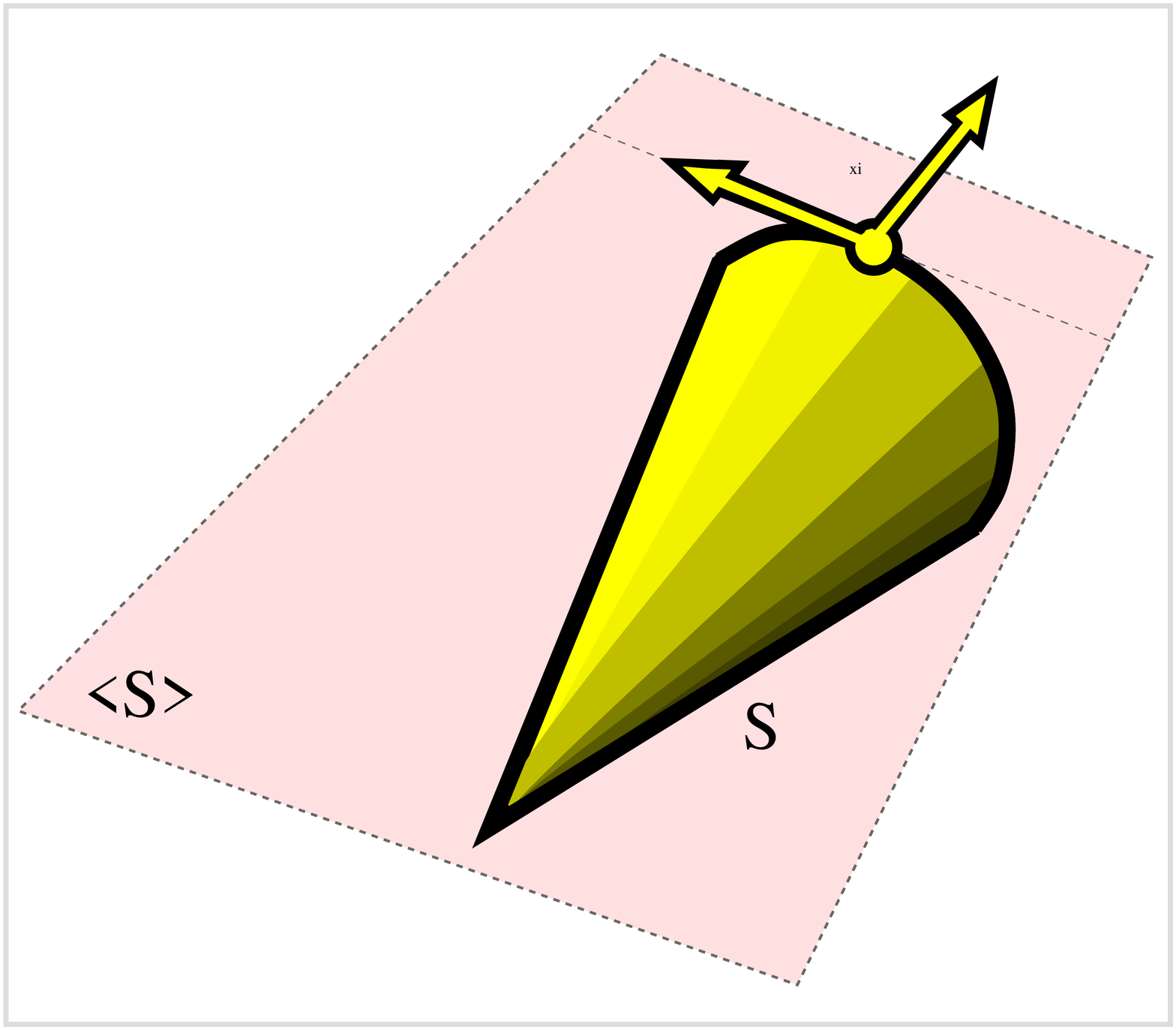} \\
\small{Base locus scheme $\mathcal B(\mathcal J_{7}^5)$}
\end{tabular}
\end{minipage}
\quad 
&  
\quad 
\begin{minipage}{4.0cm}
\begin{tabular}{c}
\psfrag{xi}[][][1]{$\,  \scriptstyle{\boldsymbol{\xi}} \; \; $}
\psfrag{S}[][][1]{$\,  \scriptstyle{\boldsymbol{S}} $}
\psfrag{P}[][][1]{$\;\;\,   \scriptstyle{\boldsymbol{p}} $}
\psfrag{<S>}[][][1]{}
\includegraphics[width=4.4cm,height=4.0cm]{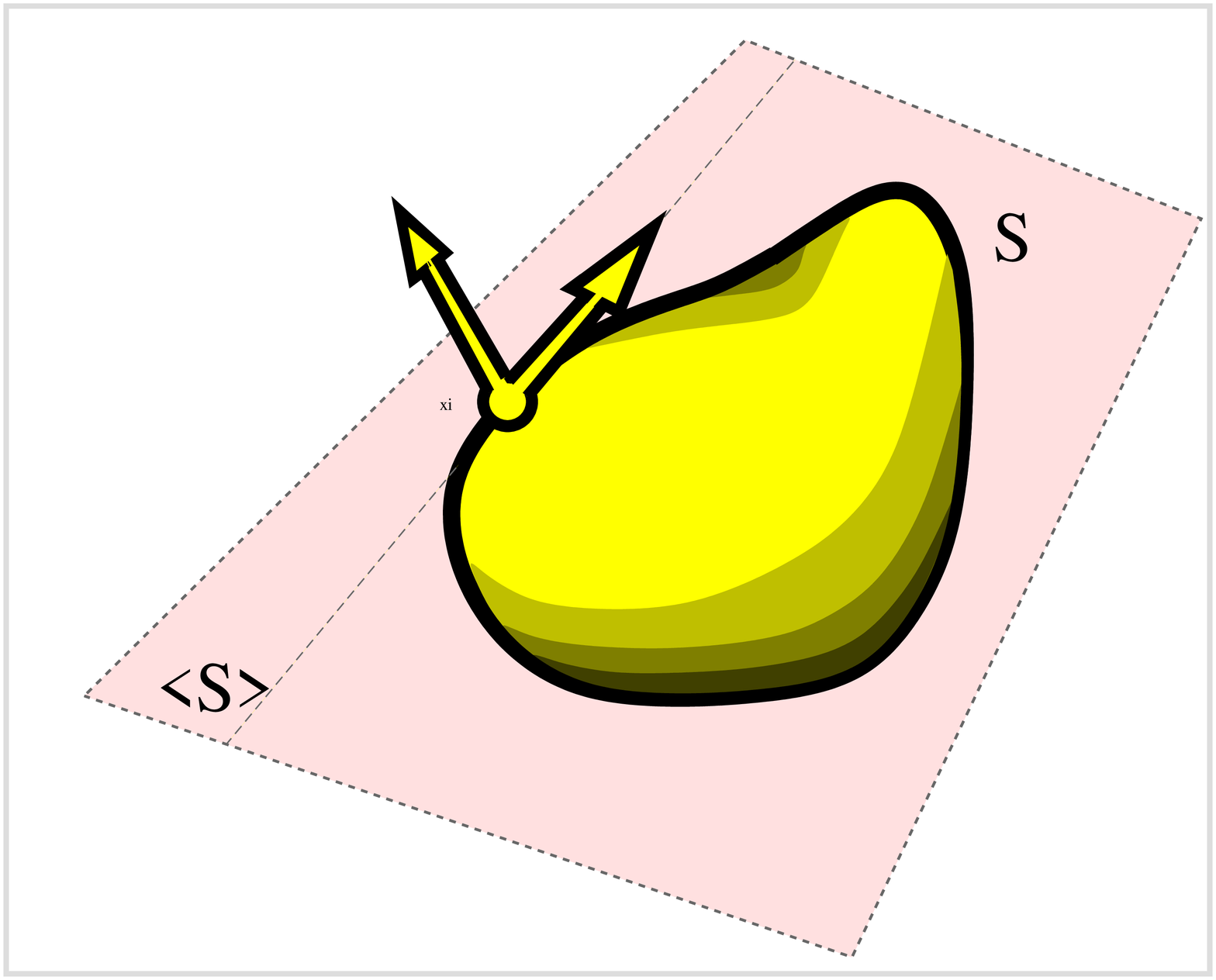} \\
\small{Base locus scheme $\mathcal B(\mathcal J_{8}^5)$}
\end{tabular}
\end{minipage}
\vspace{0.5cm}\\
\begin{minipage}{4.0cm}
\begin{tabular}{c}
\psfrag{S}[][][1]{$\,  \scriptstyle{\boldsymbol{S}} $}
\psfrag{P}[][][1]{$\;\;  \scriptstyle{\boldsymbol{p}} $}
\psfrag{<S>}[][][1]{}
\includegraphics[width=4.4cm,height=4.0cm]{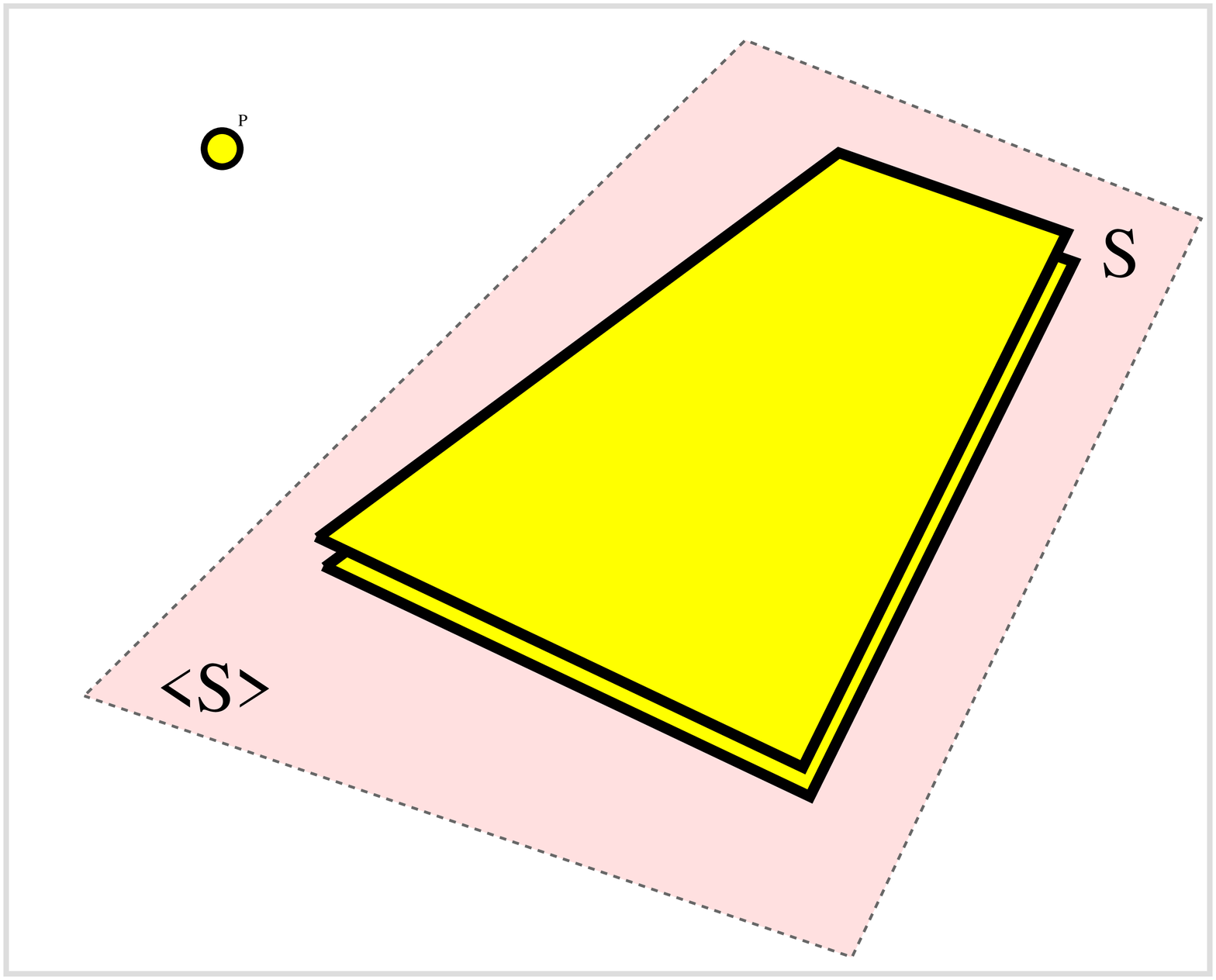} \\
\small{Base locus scheme $\mathcal B(\mathcal J_{12}^5)$}
\end{tabular}
\end{minipage} \quad 
&
 \quad 
 \begin{minipage}{4.0cm}
\begin{tabular}{c}
\psfrag{S}[][][1]{$\,  \scriptstyle{\boldsymbol{S}} $}
\psfrag{P}[][][1]{$\;\;\,   \scriptstyle{\boldsymbol{p}} $}
\psfrag{<S>}[][][1]{}
\includegraphics[width=4.4cm,height=4.0cm]{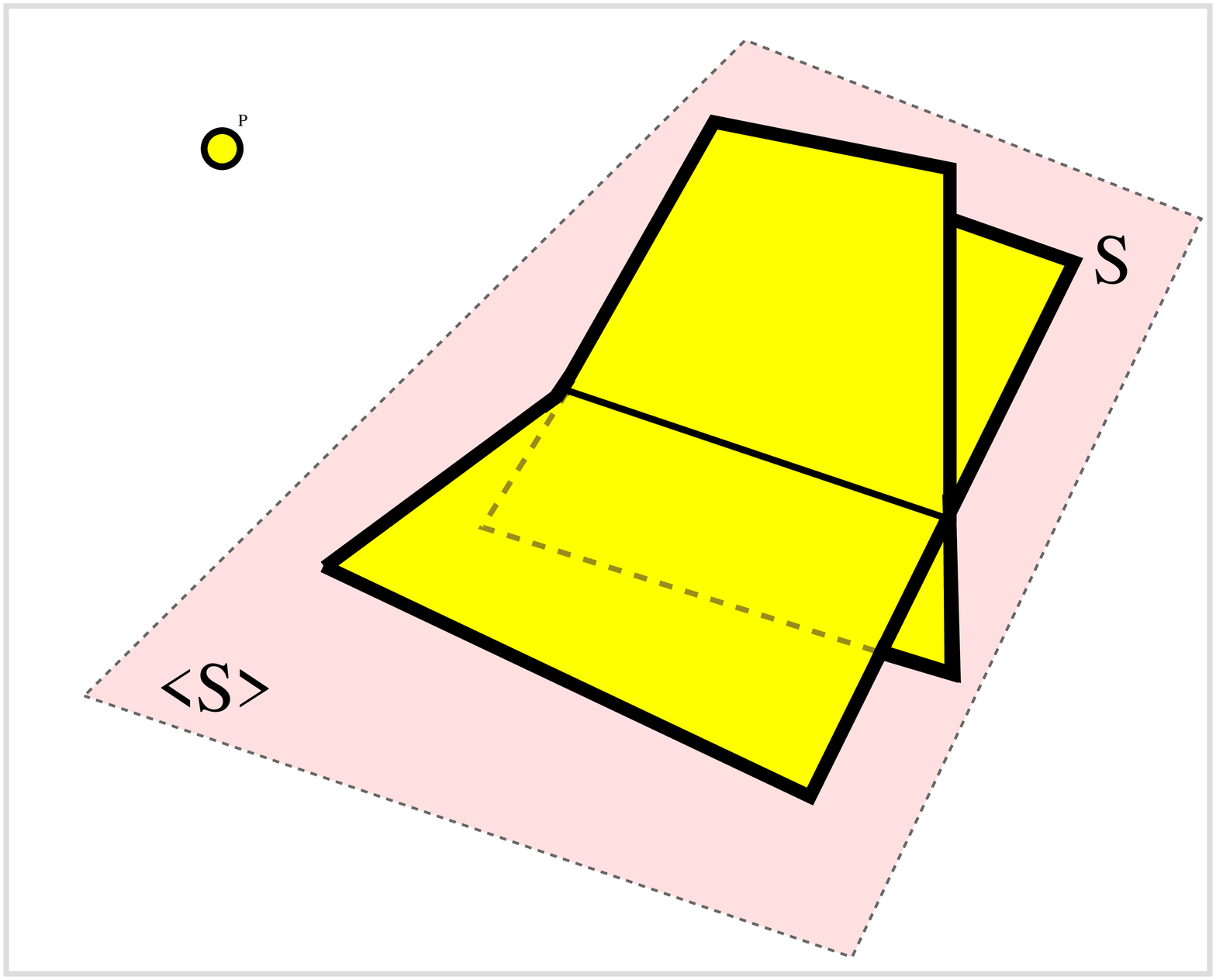} \\
\small{Base locus scheme $\mathcal B(\mathcal J_{14}^5)$}
\end{tabular}
\end{minipage}
\quad 
&  
\quad 
\begin{minipage}{4.0cm}
\begin{tabular}{c}
\psfrag{S}[][][1]{$\,  \scriptstyle{\boldsymbol{S}} $}
\psfrag{P}[][][1]{$\;\;\,   \scriptstyle{\boldsymbol{p}} $}
\psfrag{<S>}[][][1]{}
\includegraphics[width=4.4cm,height=4.0cm]{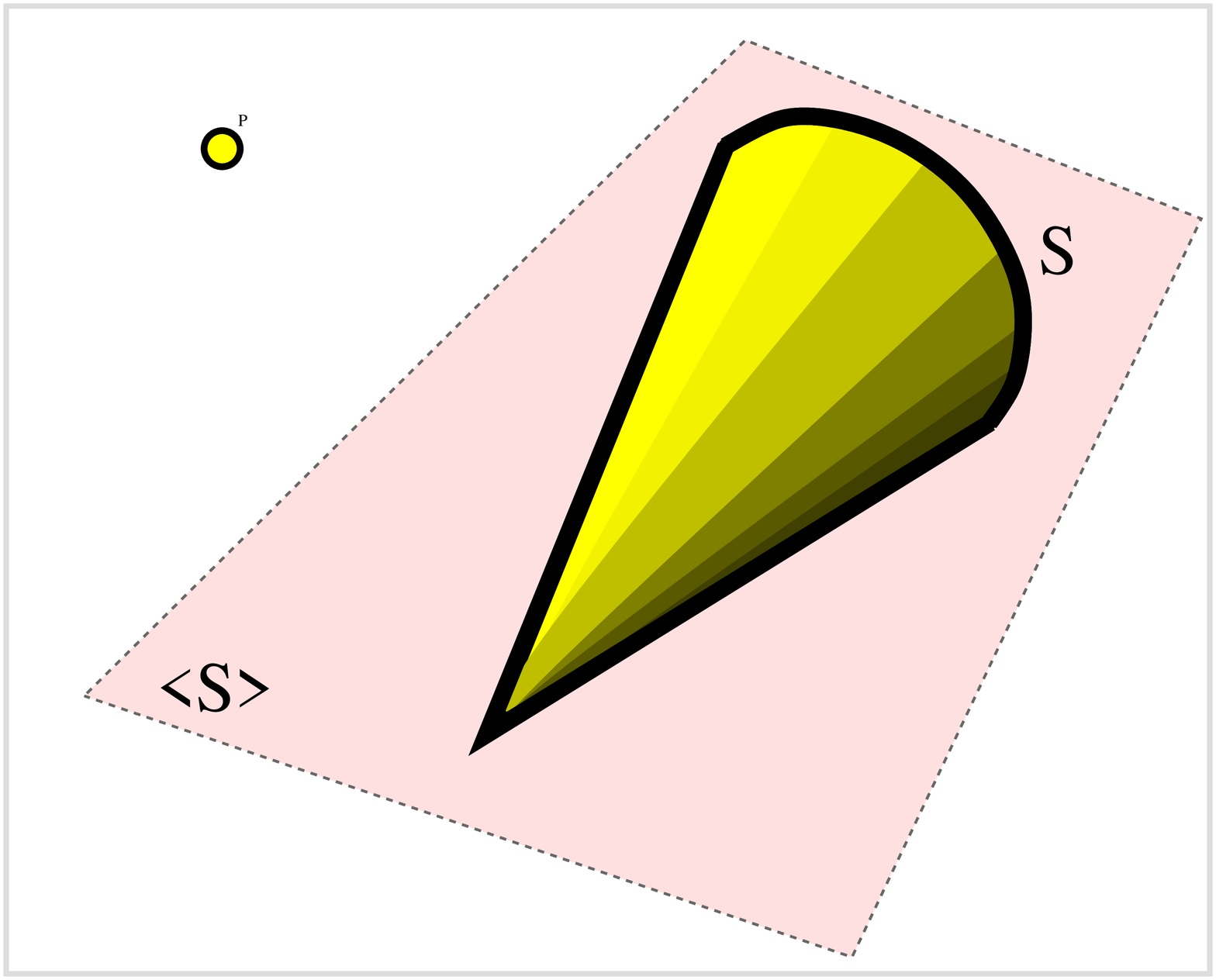} \\
\small{Base locus scheme $\mathcal B(\mathcal J_{15}^5)$}
\end{tabular}
\end{minipage}
\vspace{0.5cm}\\
&
\begin{minipage}{4.0cm}
\begin{tabular}{c}
\psfrag{S}[][][1]{$ \scriptstyle{\boldsymbol{S}} $}
\psfrag{P}[][][1]{$\;\; \,  \scriptstyle{\boldsymbol{p}} $}
\psfrag{<S>}[][][1]{}
\includegraphics[width=4.4cm,height=4.0cm]{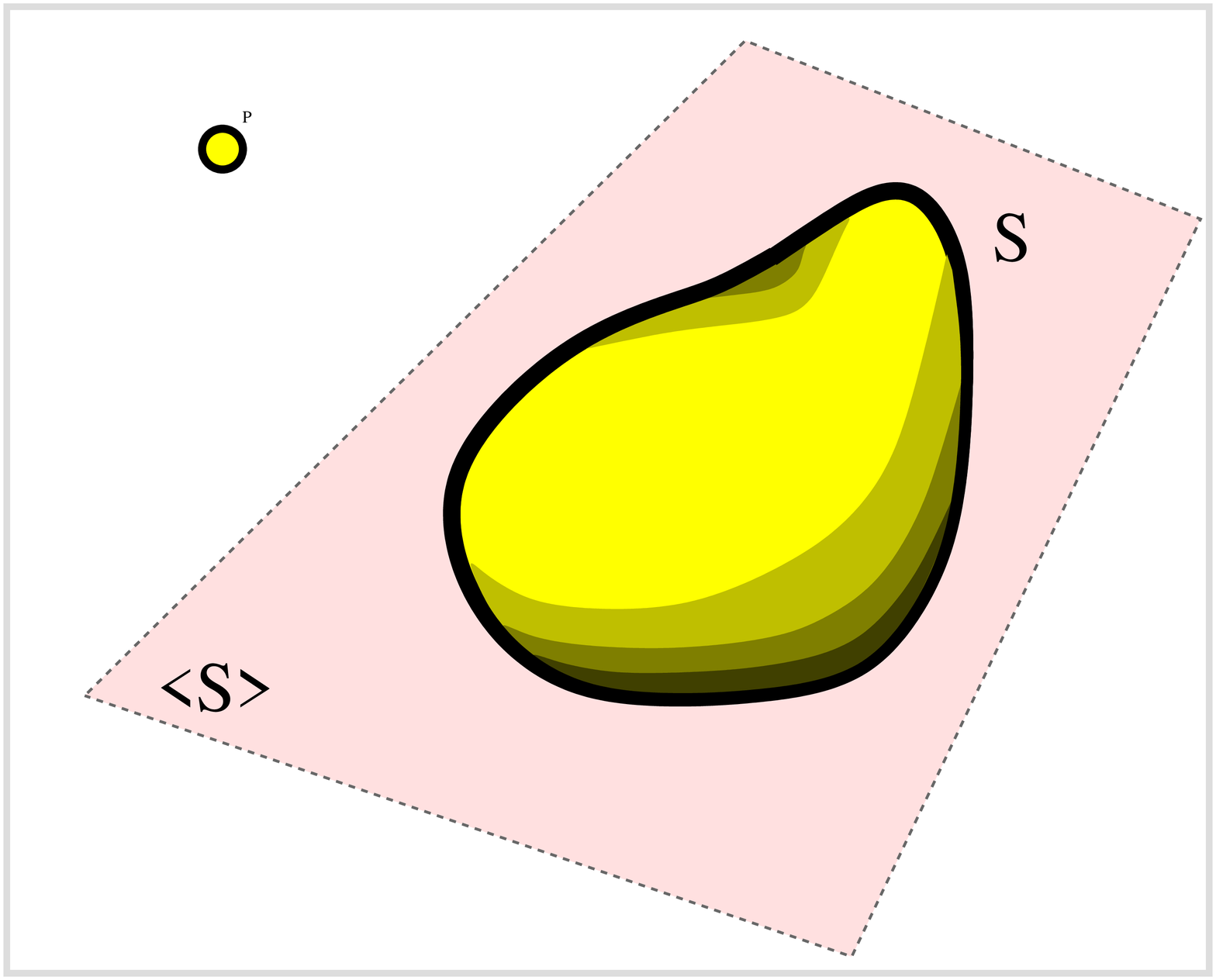} \\
\small{Base locus scheme $\mathcal B(\mathcal J_{16}^5)$}
\end{tabular}
\end{minipage}& 
\end{tabular}\medskip 
\caption{Graphic representations of the base locus schemes of elementary quadro-quadric Cremona transformations of $\mathbb P^4$ (these base loci are pictured in yellow and  the hyperplanes spanned by the quadric surface components are pictured in pale pink).}
\end{center}
\end{figure}

\begin{landscape}
\begin{table}
\begin{center}
\scalebox{0.85}{
  \begin{tabular}{c}  
  \begin{tabular}{|c|c|c|c|c|}\hline
  {\bf Algebra} $ {\boldsymbol{ \mathcal J}}$ & 
  {\bf  Quadro-quadric Cremona involution} 
     &  {\bf Primary decomposition of} $ {\boldsymbol{ \mathcal I_{\mathcal B(\mathcal J)}}}$     &  {\bf Geometrical description of}  $ {\boldsymbol{ {\mathcal B(\mathcal J)}}}$    & {\bf  Linear system}  $ 
      {\boldsymbol{ |  \mathcal I_{\mathcal B(\mathcal J)}(2) | }}  $      \\
   \hline  \hline
   $\mathcal J_{1}^5$ & $   \big(     x^2 \, , \,   -xy   \, , \,   -xz    \, , \,  y^2-xt     \, , \,  2yz-xu  \big) $     
   &  $(x,y)  \cap     (  x^2 \, , \,   xy   \, , \,   xz    \, , \,  y^2-xt     \, , \,  2yz-xu, z^2)$   &
 \begin{tabular}{l}  \vspace{-0.25cm} \\
 A 2-plane $\pi$ plus a  degenerate rational \\ quartic  
 curve  $\mathscr L^4$ with ${\mathscr L}^4_{red}$ included in $\pi$ \vspace{0.15cm}
 \end{tabular} 
          &     \begin{tabular}{l}  
     Cf. \eqref{E:LinearSystemJ1}
     \end{tabular}   
       \\  \hline 
      $\mathcal J_{9}^5$  &     $   \big(  xy \, ,\,  x^2 \, , \,  -yz\, , \, -xt\, , \, 2zt-xu  \big   ) $    
      & $(x,z)  \cap (y,x^2,xt,t^2,2zt-xu)$ &
 \begin{tabular}{l} \vspace{-0.25cm} \\
  A 2-plane $\pi$ plus a  double structure $\mathscr L _{1}$ \\   on   a line    such 
  that the hyperplane    $\langle \mathscr L _{1} \rangle $  \\   cuts $\pi$ along 
a line $L$ that meets $(\mathscr L _{1})_{\rm red}$ \vspace{0.15cm}
  \end{tabular}     
       &   
 \begin{tabular}{l} 
 \vspace{-0.25cm} \\
  Hyperquadrics containing $\pi$ and tangent     \\ 
 along  $(\mathscr L _{1})_{\rm red}$ to a fixed    smooth quadric  \\  surface
in $\langle \mathscr L _{1} \rangle$   containing $L\cup (\mathscr L _{1})_{\rm red}$ \vspace{0.15cm}
     \end{tabular}
      \\    \hline 
       $\mathcal J_{10}^5$ &$   \big(    xy \, , \,   x^2   \, , \,   -yz    \, , \,  -yt     \, , \, -xu     \big) $     
&  $(x,y)  \cap (x,z,t)  \cap   (u,y,x^2)$ &
 \begin{tabular}{l}  \vspace{-0.25cm} \\
 A 2-plane $\pi$ plus a line  
 $\ell$  intersecting $\pi$ plus a \\     double line $\mathscr L_0$   such that the  2-plane  $\langle \mathscr L_0\rangle$    \\ does not meet $\ell$ and   
    intersects $\pi$ along  $(\mathscr L_0)_{\rm red}$ \vspace{0.15cm}
 \end{tabular} 
          &    
          \begin{tabular}{l} 
           \vspace{-0.4cm} \\
          Hyperquadrics  
             containing  $\pi\cup \ell $  \\ and 
                      tangent to   $\langle \mathscr L_0 \rangle$  along    $(\mathscr L_0)_{\rm red}$          
     \end{tabular}
          
      \\  \hline  
         $\mathcal J_{13}$ &      $  \big (    yz \, , \, xz \, , \, xy\, , \, -zt\, , \, -yu   \big ) $     
& $ (u, z, x)\cap (t, y, x)\cap (z, y)$     &
\begin{tabular}{l}     
 \vspace{-0.25cm} \\
 A 2-plane $\pi$  plus  two skew 
 lines $\ell,\ell'$   \\ intersecting  $\pi$ in two distinct points   \vspace{0.15cm}
\end{tabular}
      & 
      \begin{tabular}{l}
        \vspace{-0.25cm} \\ 
       Hyperquadrics in $\p^4$\\ 
     containing   $\pi\cup \ell\cup \ell'$  \vspace{0.15cm}
     \end{tabular}        \\    \hline 
 \end{tabular}
 \end{tabular}
}
\end{center}\bigskip
\caption{Classification of quadro-quadric Cremona transformations of $\mathbb P^4$ of type II.}
\end{table}
\bigskip 
\bigskip 
\bigskip 
\bigskip 
\begin{figure}[H]
\begin{center}
\begin{tabular}{ccc}
\begin{minipage}{4.0cm}
\begin{tabular}{c}
\psfrag{L}[][][0.7]{$\quad \,   \scriptstyle{\boldsymbol{ L}}  $}
\psfrag{L1}[][][1]{$\quad \,   \scriptstyle{\boldsymbol{\mathscr L_1}}  $}
\psfrag{pi}[][][1]{$\quad \,  \scriptstyle{\boldsymbol{\pi}} $}
\psfrag{<L1>}[][][0.85]{\rotatebox{20}{\; \quad $ \scriptstyle{\boldsymbol{\langle \mathscr L_1\rangle}}\quad $}}
\includegraphics[width=5cm,height=4.6cm]{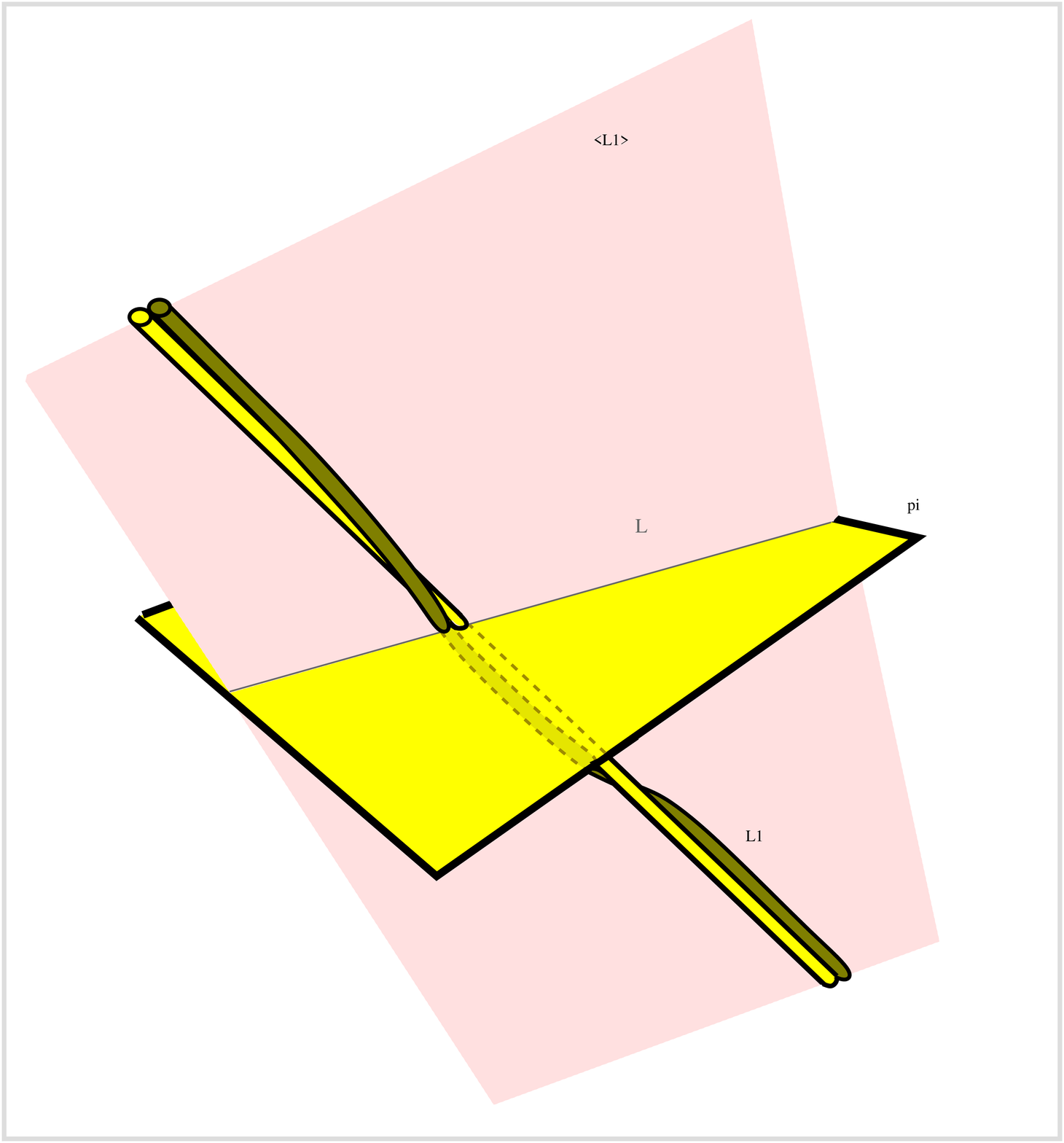} \\
\small{Base locus scheme $\mathcal B(\mathcal J_{9}^5)$}
\end{tabular}
\end{minipage}
\qquad 
\qquad 
&  
\qquad 
\begin{minipage}{4.0cm}
\begin{tabular}{c}
\psfrag{l1}[][][1]{$\; \, \scriptstyle{\boldsymbol{\ell}}  $}
\psfrag{L}[][][0.9]{$\; \; \;     \scriptstyle{\boldsymbol{\mathscr L_0}}  $}
\psfrag{pi}[][][1]{$\;  \scriptstyle{\boldsymbol{\pi}} $}
\psfrag{<L>}[][][0.85]{\rotatebox{0}{\quad \quad $ \scriptstyle{\boldsymbol{\langle \mathscr L_0\rangle}}\quad $}}
\includegraphics[width=5cm,height=4.6cm]{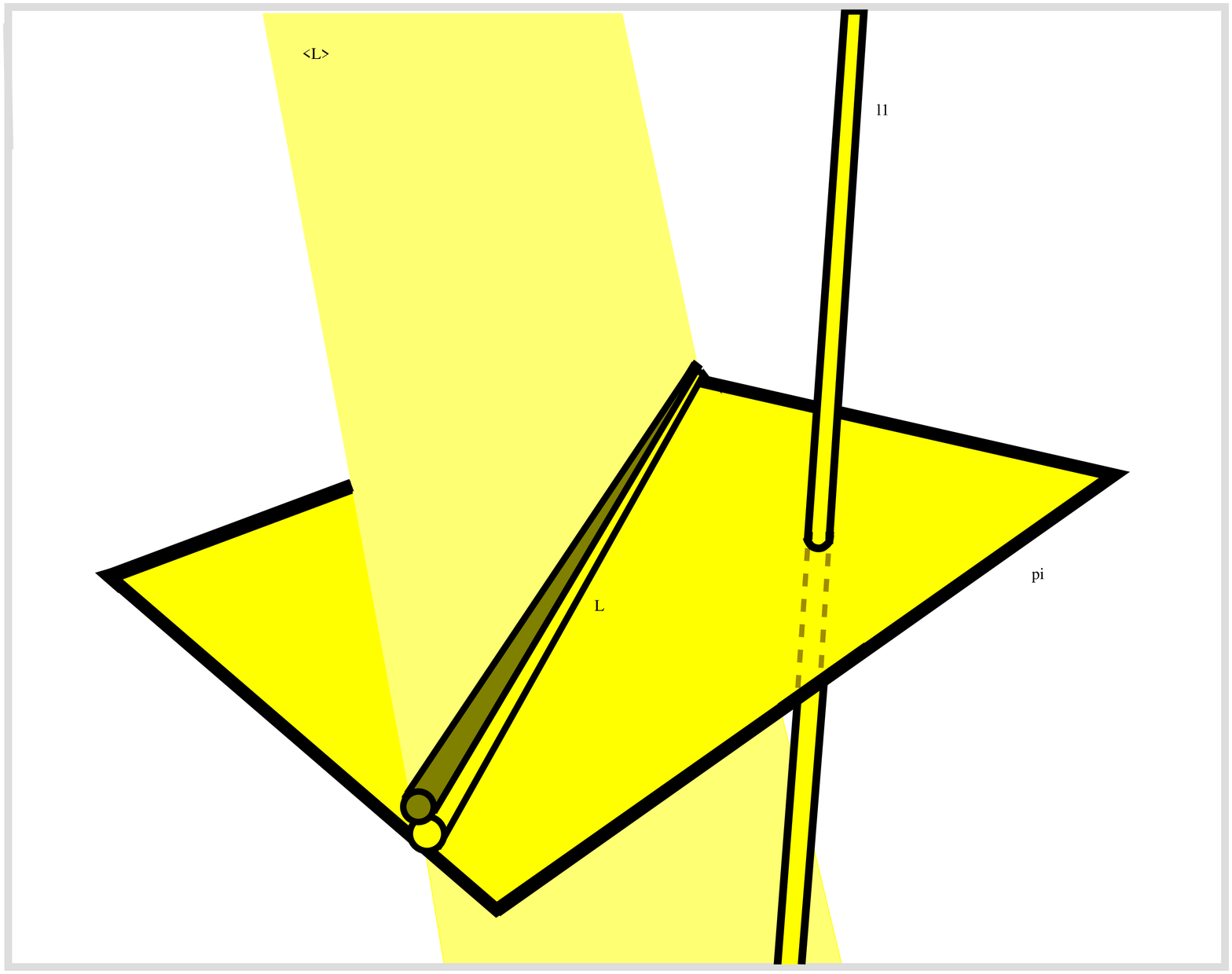} \\
\small{Base locus scheme $\mathcal B(\mathcal J_{10}^5)$}
\end{tabular}
\end{minipage}
\qquad   \qquad 
&  
\qquad  
\begin{minipage}{4.0cm}
\begin{tabular}{c}
\psfrag{l1}[][][1]{$\;\,   \scriptstyle{\boldsymbol{\ell}}  $}
\psfrag{l2}[][][1]{$\quad \,  \scriptstyle{\boldsymbol{\ell'}}  $}
\psfrag{pi}[][][1]{$\;  \scriptstyle{\boldsymbol{\pi}} $}
\includegraphics[width=5cm,height=4.6cm]{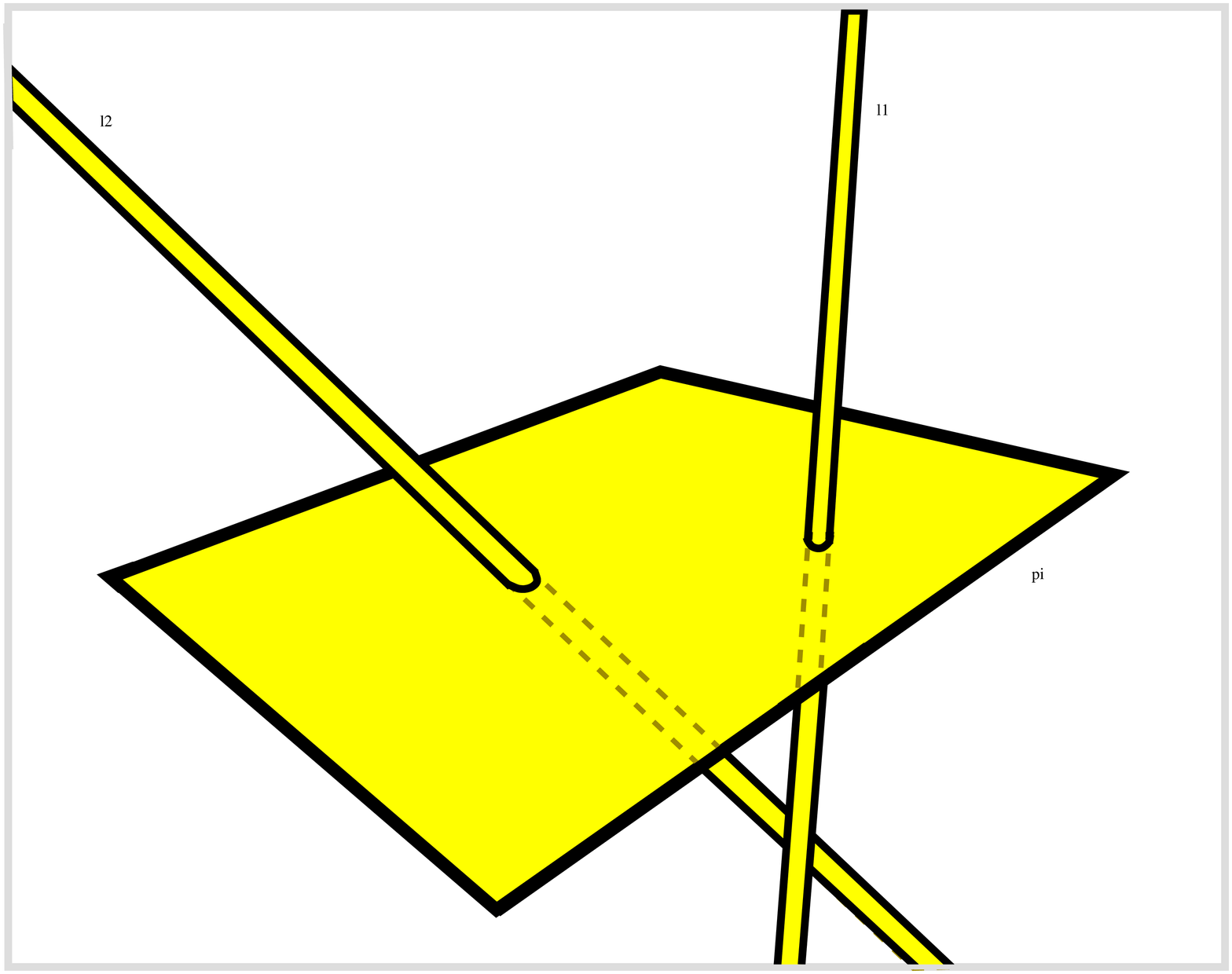} \\
\small{Base locus scheme $\mathcal B(\mathcal J_{13}^5)$}
\end{tabular}
\end{minipage}
\end{tabular}\medskip 
\end{center}
\caption{Pictural representations (in yellow) of the base locus schemes  of quadro-quadric Cremona transformations of $\mathbb P^4$ of type II.}
\end{figure}
\end{landscape}

The classification shows that up to linear equivalence, there are only two quadro-quadric Cremona transformations of $\p^4$ of type $III$.  The generic one is well-understood:  the associated algebra is $\mathcal J_{11}^5$,  the adjoint express as $(x,y,z,t,u)^\#=(xy,x^2,u^2-yz,-yt,-xu)$ in a suitable system of coordinates, the primary decomposition of the associated ideal is $\mathcal I_{\mathcal B(\mathcal J_{11}^5)}=(x,t,u^2-yz)\cap (y,x^2,xu,u^2)$,  the base locus scheme $ \mathcal B(\mathcal J_{11}^5)$ is the scheme $\mathcal B_{III}$ of   Theorem \ref{T:BV1} and the associated linear system is formed by   hyperquadrics containing $C$ and tangent to the line $\mathcal L_{\rm red}$ at  the intersection locus $C\cap \langle \mathcal L\rangle$,  that is just one point.
\sk 

In contrast we have almost nothing to say on the  quadro-quadric Cremona transformations of $\p^4$ associated to the algebra $\mathcal J_{2}^5$: its base locus scheme $\mathcal B(\mathcal J_{2}^5)$ is a 
degree 5 multiplicity 2 scheme of arithmetic genus 1 supported on the line $V(x,y,z)$.
 This case 
 appears to be the most degenerate and the most  mysterious one.  
 
 Note however than if for $\lambda\in \mathbb C$, one defines 
 $$
 f_\lambda: \p^4\dashrightarrow \p^4\, , [x:y:z:y:u]\mapsto [x^2:-xy:-xz:y^2+\lambda z^2-xt: 2yz-xu]\, , 
 $$
 one obtains an algebraic 1-dimensional family of quadro-quadric Cremona involutions of $\p^4$ such that $f_0=f_{\mathcal J_1^5}$ and with 
 $f_\lambda$ linearly equivalent to $f_1=f_{\mathcal J_2^5}$ for every $\lambda\neq 0$.  Thus $f_{\mathcal J_1^5}$ appears naturally as a degeneration of $f_{\mathcal J_2^5}$ in    contrast 
 with what is going on at the schematic level. Indeed,  the associated family of base locus schemes $\{  \mathcal B_{f_\lambda}\}_{\lambda \in \mathbb C}$  is not flat (it is neither equidimensional since $\dim \mathcal B_{f_0}=2$ whereas $\dim \mathcal B_{f_\lambda}=1$ for every $\lambda\neq 0$). 
 \sk 
 
 These few remarks show that the study of the `moduli space of quadro-quadric Cremona transformations of $\p^n$'  is certainly quite difficult from a scheme theoretic perspective as soon as  $n$ increases.

\section{\bf Quadro-quadric Cremona transformations of $\mathbb P^5$}
\label{S:ClassifDim5}

In \cite{brunoverra}, Bruno and Verra apply Semple's approach (see  Section \ref{S:qqP4-Bruno-Verra}) to the case of $\mathbb P^5$. 
Using the description of quadric Cremona transformations of $\mathbb P^4$ given by Semple in \cite{semple}, they prove the following result:
\begin{thm} 
\label{T:BV2}
The base locus scheme of a quadro-quadric Cremona transformation of $\mathbb P^5$ belongs to  the irreducible components of ${\rm Hilb}(\p^5)$ containing 
 one  of the  subschemes $\mathcal B_I,\ldots,\mathcal B_{IV}$ of $\mathbb P^5$ where: \mk
\begin{enumerate}
\item[(I)] $\mathcal B_I$ is the disjoint union   of a smooth quadric threefold  $Q$ with  a point $p$ lying outside the hyperplane $\langle Q \rangle$;\medskip 
\item[(II)] $\mathcal B_{II}$ is the union $\Pi \cup \pi\cup \ell$   of a 3-plane $\Pi$ with  a 2-plane $\pi$  and a line  $\ell$ with  relative positions as follows: $\pi$ intersects $\Pi$ along a line,  $\ell$ is disjoint from $\pi$ and meets $\Pi$ at one point;\mk 
\item[(III)] $\mathcal B_{III} $ is the schematic union 
 of    a double plane $\mathcal P$ in a hyperplane  $H$  with a smooth conic $C$ that is  tangent to $H$ at $C\cap \mathcal P_{\rm red}$ that is one point; \mk
 \item[(IV)] $\mathcal B_{IV} $ is the Veronese surface $v_2(\mathbb P^2)\subset \mathbb P^5$.
 \end{enumerate}
\end{thm}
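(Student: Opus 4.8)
The statement is attributed to Bruno and Verra, and its proof proceeds by Semple's inductive descent, which I would reproduce as follows. Let $f\in {\bf Bir}_{2,2}(\mathbb P^5)$ and fix a point $o\in\mathbb P^5$ at which $f$ is a local isomorphism. Choosing a general quadric $Q$ of the homaloidal system $f^{-1}|\mathcal O_{\mathbb P^5}(1)|$ passing through $o$, the image $P=f(Q)$ is a general hyperplane through $f(o)$; composing the restriction $f^{-1}|_P$ with the linear projection $\pi\colon Q\dashrightarrow\mathbb P^4$ from $o$ produces a birational map $h=\pi\circ(f^{-1}|_P)\colon P\dashrightarrow \mathbb P^4$ of bidegree $(2,d)$ with $d\in\{2,3,4\}$, whose base locus scheme is $\mathcal B_h=\{f(o)\}\cup(P\cap\mathcal B_{f^{-1}})$ (see \cite[Section 2]{brunoverra}). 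The whole point is that this reduces the problem to a lower-dimensional and already understood situation.

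First I would invoke Semple's classification in \cite{semple} of the Cremona transformations of bidegree $(2,d)$ of $\mathbb P^4$ to enumerate the finitely many possibilities for $\mathcal B_h$. Because by construction $\mathcal B_h$ contains the reduced isolated point $f(o)$, only the sufficiently general members of each of these families can arise, a remark of Bruno and Verra that drastically shortens the list. The residual scheme $\mathcal B_h\setminus\{f(o)\}$ is exactly the generic hyperplane section $P\cap\mathcal B_{f^{-1}}$ of the base locus of $f^{-1}$, so reading off this section case by case yields the possible generic hyperplane sections of $\mathcal B_{f^{-1}}$, and by the $f\leftrightarrow f^{-1}$ symmetry of the bidegree $(2,2)$ condition the same list of types governs $\mathcal B_f$.

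The second step is the reconstruction of the surface $\mathcal B_{f^{-1}}\subset\mathbb P^5$ from its generic hyperplane section, subject to the constraints that it carries the Hilbert polynomial imposed by the bidegree $(2,2)$ and is scheme-theoretically cut out by quadrics. Matching these sections against global candidates produces the four schemes $\mathcal B_I,\dots,\mathcal B_{IV}$. I would then confirm that each really supports a quadro-quadric Cremona transformation, most efficiently through the $JC$-correspondence, by exhibiting a six-dimensional rank $3$ Jordan algebra whose adjoint has the prescribed base locus (for instance $\mathbb C\times\mathcal J^5_{q,4}$ for $\mathcal B_I$ and the simple Jordan algebra of symmetric $3\times 3$ matrices for the Veronese $\mathcal B_{IV}$), and conclude by upper-semicontinuity in ${\rm Hilb}(\mathbb P^5)$ that an arbitrary $\mathcal B_f$, being a degeneration of a generic member, lies in the irreducible component of one of these four.

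The main difficulty I anticipate is precisely this reconstruction, together with the control of the most degenerate members. A surface in $\mathbb P^5$ is not determined by its generic hyperplane section, so the reduced reducible configurations $\mathcal B_I$ and $\mathcal B_{II}$, which emerge transparently from the inductive section, contrast with the subtler cases: the non-reduced double plane inside $\mathcal B_{III}$ and the smooth Veronese surface $\mathcal B_{IV}=v_2(\mathbb P^2)$. The latter in particular has no naive isolated point of the expected shape in its generic section, so one must argue separately that it is still compatible with Semple's dichotomy. As the authors already caution in the $\mathbb P^4$ analysis (compare the discussion of the non-reduced quartic $\mathscr L^4$), one must also resist asserting flatness of the interpolating families where it can genuinely fail, which is exactly why the conclusion is phrased as membership in irreducible components of the Hilbert scheme rather than as a single flat family.
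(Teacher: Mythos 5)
Your outline coincides with the approach the paper points to for this statement: the paper itself contains no proof of Theorem \ref{T:BV2} (it is quoted from Bruno and Verra \cite{brunoverra}), and the only proof-related material in the text is the sketch of Semple's descent in Section \ref{S:qqP4-Bruno-Verra}, which is exactly what you reproduce --- the auxiliary map $h=\pi\circ (f^{-1}|_P)$ of bidegree $(2,d)$ with $d\in\{2,3,4\}$, the identification $\mathcal B_h=\{f(o)\}\cup (P\cap \mathcal B_{f^{-1}})$, the Bruno--Verra remark that the honest isolated point $f(o)$ prevents $h$ from being too special, and the input of Semple's classification \cite{semple} of quadric transformations of $\mathbb P^4$. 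Your existence witnesses via the $JC$-correspondence are also correct: $\mathbb C\times \mathcal J^5_{q,4}$ is of type $I$, and the $6$-dimensional simple rank $3$ Jordan algebra of symmetric $3\times 3$ complex matrices is of type $IV$, its adjoint being the adjugate map, whose base locus is $v_2(\mathbb P^2)$.

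There is, however, a genuine gap in your concluding step, where you pass from generic to arbitrary $f$ ``by upper-semicontinuity in ${\rm Hilb}(\mathbb P^5)$,'' claiming that an arbitrary $\mathcal B_f$, being a degeneration of a generic member, lies in one of the four components. This inference is false, and the paper itself supplies the counterexample: at the end of Section \ref{S:Bir22P4}, the family $f_\lambda$ on $\mathbb P^4$ has $f_0=f_{\mathcal J^5_1}$ of type $II$ while $f_\lambda$ is linearly equivalent to $f_{\mathcal J^5_2}$ of type $III$ for every $\lambda\neq 0$; the associated family of base loci $\{\mathcal B_{f_\lambda}\}$ is not flat, indeed not even equidimensional ($\dim \mathcal B_{f_0}=2$ while $\dim \mathcal B_{f_\lambda}=1$), so the limit base locus does \emph{not} lie in the Hilbert component of the nearby ones. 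Degeneration at the level of maps therefore gives no control on Hilbert-scheme membership of the base locus --- flatness is precisely what fails --- and your own closing caveat about non-flat interpolating families contradicts this very step. The statement for an arbitrary $f$ must instead come from running the descent and the reconstruction on $f$ itself, which is legitimate because $\mathcal B_h$ contains the isolated reduced point $f(o)$ for \emph{every} $f$, not only for generic ones; the price is that the reconstruction of $\mathcal B_{f^{-1}}$ from its general hyperplane section must then cover degenerate configurations as well, and this is exactly the hard part of \cite{brunoverra} that your sketch leaves open. Two smaller inaccuracies: in $\mathbb P^5$ the base locus is $3$-dimensional for types $I$ and $II$ (it is a surface only in cases $III$ and $IV$), and your worry about the Veronese case is misplaced, since the isolated point is supplied by the construction itself and the residual generic section of $v_2(\mathbb P^2)$ is a rational normal quartic curve, a perfectly admissible member of Semple's $(2,4)$ list.
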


The preceding result implies in particular that there are exactly four  types for elements of ${\bf Bir}_{22}(\mathbb P^5)$, that will be denoted by $I,II,III$ and $IV$. 
As in dimension four,  to these types corresponds four generic quadro-quadric Cremona transformation 
    of $\mathbb P^5$.  Explicit involutive normal forms   for these  as well as the corresponding multidegrees are given in the following table: 
\begin{table}[H]
  \centering
  \begin{tabular}{|c|c|l|c|}\hline
    {\bf Type}   $\boldsymbol{T}$ &    {\bf Base locus}   $\boldsymbol{\mathcal B_T}$    &\quad  {\bf  Generic Cremona transformation} $\boldsymbol{f_T}$ & 
  {\bf Multidegree}
  \\
   \hline
   \hline
 $I$  &      $Q\sqcup \{p\}$          &  $\big( y^2+z^2+t^2+u^2+v^2, xy,-xz,-xt,-xu,-xv\big)$        &    $(2,2,2,2)$    \\ \hline 
 $II$    &    $\Pi\cup \pi\cup \ell$         &   $(zy, xz, xy, -tz, -uz, -vy)$ &   $(2,3,3,2)$\\\hline
   $III$  &     $  \mathcal P \cup C$             &  $   (xy, x^2,-yz+v^2, -yt, -yu,-xv)  $  
   &    $(2,4,4,2)$   \\    \hline
    $IV$   &   $v_2(\mathbb P^2)$            &  $\big(  yz-v^2, xz-u^2,xy-t^2,uv-zt,tv-uy,tu-xv\big)$ & $(2,4,4,2)$     \\
       \hline                
 \end{tabular}
\label{Tab:jordan}
\caption{The four generic quadro-quadric Cremona transformations of $\mathbb P^5$.}
\end{table}

A complete  classification (up to isomorphisms) of rank 3 Jordan algebras of dimension 6  is obtained  in \cite{PIRIO}    (following the more general classification done by Wesseler in \cite{wesseler}). 
One deduces from it the  complete classification of quadro-quadric Cremona transformations of $\mathbb P^5$.

\begin{thm} A quadro-quadric Cremona transformation of $\mathbb P^5$ is linearly equivalent to one of the 29  
Jordan involutions listed in T{\small{ABLE}} 10 below.
\end{thm}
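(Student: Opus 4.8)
The plan is to deduce this classification from the $JC$-correspondence, which reduces the problem to a purely algebraic one. Since a quadro-quadric Cremona transformation of $\mathbb P^5$ corresponds to $n-1=5$, i.e. $n=6$, the $JC$-correspondence asserts that any $f\in {\bf Bir}_{2,2}(\mathbb P^5)$ is linearly equivalent to the projectivization of the adjoint map $x\mapsto x^\#$ of a $6$-dimensional rank $3$ Jordan algebra $J$, and that the isomorphism class of $J$ is uniquely determined by the linear equivalence class of $f$. Hence it suffices to enumerate the $6$-dimensional rank $3$ Jordan algebras up to isomorphism and, for each, to write down the associated involution $[\#]$; matching these against the $29$ entries is exactly the content of T{\small{ABLE}} 10.

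First I would carry out the enumeration of the algebras, following verbatim the strategy used for dimension $5$ in Section \ref{S:JordanDim5}: stratify according to $\dim R$, where $R=\Rad(J)\in\{0,1,2,3,4,5\}$, and treat each stratum separately. When $\dim R=5$ the algebra is the unitalization of a $5$-dimensional nilalgebra of nilindex at most $3$, so one first extends the classification of T{\small{ABLE}} 1 to dimension $5$. When $\dim R=0$ the algebra is semi-simple, and one enumerates the semi-simple rank $3$ Jordan algebras of dimension $6$ (note that, unlike dimension $5$, here a simple factor of dimension $6$ becomes available). For the intermediate values the essential tool is the Peirce decomposition of Proposition \ref{P:Peirce} relative to an irreducible decomposition $e=e_1+\cdots+e_m$ of the unity: one writes $J=\bigoplus_{i\le j}J_{ij}$, uses the multiplication rules \eqref{E:Jii} and \eqref{E:Jij} together with \eqref{E:RJii} to locate $R$ inside the Peirce pieces, and then discusses according to the dimensions of the off-diagonal components, as measured by the integer $\theta=\dim\bigl(R\cap\bigoplus_{i<j}J_{ij}\bigr)$ exactly as in the $5$-dimensional argument. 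In each subcase one either recognizes $J$ as a direct product of lower-dimensional algebras, reduces the remaining structure constants to a normal form by a change of basis, or discards the case by a rank count or by exhibiting a failure of the Jordan identity.

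The main obstacle is the intermediate radical case, chiefly $\dim R=3$ and $\dim R=2$: there the Peirce analysis leaves several undetermined structure constants, so one must classify the admissible $J$-module structures on $R$ up to isomorphism, verify the Jordan identity in each instance, and carefully rule out the numerous spurious configurations. This is the lengthy but conceptually routine linear-algebra bookkeeping already signalled in the text and carried out in \cite{PIRIO,wesseler}; it is combinatorially delicate, and it is where the precise count of $29$ algebras is actually pinned down.

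Finally I would pass from algebras to Cremona maps. For each $J$ in the list one computes the adjoint $x^\#=x^2-T(x)x+S(x)e$ in a convenient basis, producing the explicit involutive normal forms of T{\small{ABLE}} 10. The $29$ resulting maps are pairwise non linearly equivalent precisely because the corresponding $29$ algebras are pairwise non-isomorphic, by the uniqueness in the $JC$-correspondence; discrete invariants of the base locus schemes $\mathcal B(J)\subset\mathbb P^5$ --- their Hilbert polynomials, their types in the sense of Theorem \ref{T:BV2}, and their multidegrees --- then furnish a convenient effective confirmation, separating the cases just as the analogous invariants did in dimension $4$.
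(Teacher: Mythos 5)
Your proposal follows essentially the same route as the paper: the $JC$-correspondence reduces the classification of ${\bf Bir}_{2,2}(\mathbb P^5)$ to that of $6$-dimensional rank $3$ Jordan algebras up to isomorphism, which is then obtained by the radical-dimension/Peirce stratification used in dimensions $4$ and $5$, with the heavy case analysis deferred to \cite{wesseler} and \cite{PIRIO} --- exactly what the paper does, since it states the theorem by citing those classifications rather than reproving them. Your added remarks (the need to extend the nilalgebra table to dimension $5$, the appearance of the $6$-dimensional simple algebra, and pairwise non-equivalence via uniqueness in the correspondence) are all consistent with the paper's framework.
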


A careful and systematic  study of the Cremona maps in T{\small{ABLE}} 10  will be considered eventually elsewhere.

\newpage

\begin{table}[h]
 \hspace{-1cm}  \centering
 \begin{tabular}{|l|l|c|c|c|}\hline
  {\bf Algebra}  &  \qquad\qquad{\bf Jordan adjoint}  $\boldsymbol{(x,y,z,t,u,v)^\#}$ &    {\bf Type}  &     {\bf Multidegree}  &$ \boldsymbol{\dim(R)}$   \\
   \hline  \hline
 \; $\mathcal J_{\rm s}
 $&$\big(  yz-v^2, xz-u^2,xy-t^2,uv-zt,tv-uy,tu-xv\big)$   &  $IV$  &  $(2, 4,4 ,2)$   & 0\\    \hline  
\; $\mathcal J_{\rm ss}$& $\big( y^2+z^2+t^2+u^2+v^2, xy,-xz,-xt,-xu,-xv\big)$   &  $I$  &  $(2, 2,2 ,2)$  & 0  \\    \hline \hline
\; $\mathbb C\times \mathcal J_{q,4}^5$ & $\big(y^2+z^2+t^2+u^2,xy,-xz,-xt,-xu ,-xv \big)$   &  $I$  &  $(2, 2,2 ,2)$  & 1
  \\    \hline  \hline
\; $\mathbb C\times \mathcal J_{q,3}^5$ &   $\big(y^2+z^2+t^2,xy,-xz,-xt,-xu ,-xv \big)$   &  $I$  &  $(2, 2,2 ,2)$  & 2 \\    \hline
\; $\mathcal J_{92}$ &   $\big(yz-t^2,xz,xy,-xt,tv-uz,tu-yv \big)$   &  $IV$  &  $(2, 4,4 ,2)$  & 2 \\    \hline  \hline
\; $\mathcal J_{100}$ &   $ (yz, xz, xy, -xt, -xu, -xv)    $   &  $ I  $  &  $(2,2  ,2  ,2)$  & 3 \\    \hline
\; $\mathcal J_{101}$  &$ (yz, xz, xy, -zt, -zu, -yv)    $   &  $  II $  &  $(2,3  ,3  ,2)$   & 3  \\    \hline
\; $\mathcal J_{102}^0$  &$   (yz, xz, xy, -zt, -yu, -xv)  $   &  $  IV  $  &  $(2,4  ,4  ,2)$   & 3\\    \hline
\; $\mathcal J_{102}^1$ &$   (yz, xz, xy, -zt+2uv, -yu, -xv)   $   &  $IV  $  &  $(2, 4 ,4  ,2)$  & 3 \\    \hline
  \hline
\; $\mathcal J_{113}^{000}$ &$ (xy, x^2, -yz, -ty, -yu,-xv)$ &  $II$  &  $(2,3,3,2)$   & 4\\    \hline
\; $\mathcal J_{100}^{100}$ & $(xy, x^2,-yz+v^2, -yt, -yu,-xv)$    & $III$   &  $(2, 4,4 ,2)$   & 4\\    \hline
 \; $\mathcal J_{122}^{a}$    &   $  \big(xy, x^2,-yz, -ty,-xu,-xv\big)   $   & $II$  &  $  (2,3,3,2)  $& 4 \\    \hline
 \; $\mathcal J_{122}^{b}$     &   $  \big(xy, x^2,-yz+u^2, -yt,-xu,-xv\big)   $   & $IV$ &     $(2,4,4,2)$ & 4  \\    \hline
    \; $\mathcal J_{122}^{c}$ &     $  \big(xy, x^2,-yz+2uv,-yt, -xu,-xv\big)   $  & $IV$   &  $(2,4,4,2)$ & 4\\    \hline
   \; $\mathcal J_{122}^{d}$   &     $  \big( xy, x^2,-yz+u^2,-yt+v^2, -xu,-xv    \big)   $     &  $ IV$   &    $(2,4 ,4 ,2)$& 4    \\    \hline  
 \; $\mathcal J_{122}^{e}$    &   $  \big(xy, x^2,-yz+u^2, -yt+2uv,-xu,-xv\big)   $  &  $  IV $    &    $(2,4 ,4 ,2)$ & 4 \\    \hline
 \; $\mathcal J_{124}^0$&    $(xy, x^2,-yz, -yt,-xu,-xv+2tu)$       &  $IV$  &  $(2, 4, 4,2)$   & 4\\    \hline
\; $\mathcal J_{124}^1$ &    $(xy, x^2,-yz+ u^2, -yt,-xu,-xv+2tu)$           &  $IV$  &  $(2, 4,4 ,2)$   & 4\\    \hline
\; $\mathcal J_{126}^0$ &    $(xy, x^2,-yz, -yt,-xu,-xv+2zu+2tu) $    &  $IV$   &  $(2, 4, 4,2)$   & 4\\    \hline
\; $ \mathcal J_{126}^1$ &  $ (xy, x^2,-yz+ u^2, -yt- u^2,-xu,-xv+2zu+2tu)$    & $IV$    &  $(2,4 ,4 ,2)$   & 4\\    \hline
\; $\mathcal J_{140}^{(0)}$ &     $ (xy, x^2,-yz, -xt,-xu,-xv)$    & $I$   & $(2,2,2,2)$    & 4\\    \hline
\; $\mathcal J_{140}^{(1)}$ &     $ (xy, x^2,-yz+  t^2, -xt,-xu,-xv)$    & $I$   & $(2,2,2,2)$    & 4\\    \hline
\; $\mathcal J_{140}^{(2)}$ &     $ (xy, x^2,-yz+ t^2+ u^2, -xt,-xu,-xv)$    & $I$   & $(2,2,2,2)$    & 4\\    \hline
\; $\mathcal J_{140}^{(3)}$ &     $ (xy, x^2,-yz+ + t^2+ u^2+v^2, -xt,-xu,-xv)$    & $I$   & $(2,2,2,2)$    & 4\\    \hline
\; $\mathcal J_{143}$ &     $(xy, x^2,-yz, -xt,-xu,-xv+2zt) $    &   $II$ & $(2,3 ,3 ,2)$    & 4\\    \hline  \hline
\; $  \mathcal J_{ES}^a $ &  $\big( x^2, -xy, -xz,-xt+2yu-zv, -xu+z^2, -xv+2yz\big)$   &  $IV$  &  $(2, 4,4 ,2)$  & 5  \\    \hline  
\; $  \mathcal J_{ES}^b $ & $\big( x^2, -xy, -xz,-xt+2yz, -xu+y^2, -xv+z^2\big)$      &   $IV$  &  $(2, 4,4 ,2)$   & 5 \\    \hline
 \; $\mathcal J_{ES}^{c, (-1)}$&   
 $\big( x^2, -xy, -xz,-xt, -xu-2yz+ 2zt , -xv-y^2+t^2\big) $
    &    $II $    &  $(2,3,3,2)$  & 5      \\    \hline
   \; $\mathcal J_{ES}^{c, (0)}$&   
   $\big( x^2, -xy, -xz,-xt, -xu+2zt , -xv+t^2 \big) $
    &    $II $    &  $(2,3,3,2)$    & 5    \\    \hline 
       \; $\mathcal J_{ES}^{c, (d)}$&   
   $\big( x^2, -xy, -xz,-xt, -xu+2zt , -xv+z^2+t^2 \big) $
    &    $IV$    &  $(2,4,4,2)$    & 5    \\    \hline 
         \; $\mathcal J_{ES}^{c, (1)}$&  
     $\big( x^2, -xy, -xz,-xt, -xu+y^2+2zt , -xv+t^2 \big) $    
    &    $IV $    &  $(2,4,4,2)$    & 5    \\    \hline 
         \; $\mathcal J_{ES}^{c, (2)}$&  
         $\big( x^2, -xy, -xz,-xt, -xu+2zt , -xv+ y^2+t^2 \big) $
    &    $ IV$    &  $(2,4,4,2)$    & 5    \\    \hline 
         \; $\mathcal J_{ES}^{c, (3)}$&  
         $\big( x^2, -xy, -xz,-xt, -xu+2  yz+2zt , -xv+t^2 \big) $
    &    $IV $    &  $(2,4,4,2)$   & 5     \\    \hline 
         \; $\mathcal J_{ES}^{c, (4)}$&   
         $\big( x^2, -xy, -xz,-xt, -xu+2zt , -xv+2yz +t^2 \big) $
             &    $IV $    &  $(2,4,4,2)$     & 5   \\    \hline 
         \; $\mathcal J_{ES}^{c, (5)}$&  
         $\big( x^2, -xy, -xz,-xt, -xu+y^2+2zt , -xv+z^2+t^2 \big) $
    &    $ IV$    &  $(2,4,4,2)$   & 5     \\    \hline 
\; $\mathcal J_{ES}^{d,000}$ &$\big( x^2, -xy, -xz,-xt, -xu, -xv+y^2 \big)$  &  $I$  &   $(2, 2,2 ,2)$  & 5   \\    \hline
\; $\mathcal J_{ES}^{d,100}$ &$\big( x^2, -xy, -xz,-xt, -xu, -xv+y^2+ z^2 \big)$   &  $I$  &   $(2, 2,2 ,2)$& 5    \\    \hline
\; $\mathcal J_{ES}^{d,110}$&$\big( x^2, -xy, -xz,-xt, -xu, -xv+y^2+z^2+ t^2 \big)$   &  $I$  &   $(2, 2,2 ,2)$ & 5   \\    \hline
\; $\mathcal J_{ES}^{d,111}$ &$\big( x^2, -xy, -xz,-xt, -xu, -xv+y^2+ z^2+ t^2+ u^2 \big)$   &  $I$  & $(2, 2,2 ,2)$& 5   \\    \hline
 \end{tabular}\vspace{0.3cm}
\caption{Involutive normal forms for quadro-quadric Cremona transformations of  $\mathbb P^ 5$ \hspace{1cm } ($ \dim R$ stands for the dimension of the radical of the corresponding  Jordan algebra, this one being labelled  with  the notation used in  \cite{PIRIO}).}
\end{table}


\begin{thebibliography}{FMT}

\bibitem{allison}
Allison B. N., \emph{A class of nonassociative algebras with involution containing the class of Jordan algebras},
Math. Ann. {\bf 237} (1978),  133Ð156. 



\bibitem{allison2}
 Allison B. N., \emph{Simple structurable algebras of skew-dimension one}, 
Comm. Algebra {\bf 18} (1990), 1245Ð1279. 




\bibitem{bk}
 Braun H., Koecher M.,  \emph{Jordan Algebren}, Grund. der Math.  128, Springer-Verlag,  1966. 

\bibitem{Blanc} Blanc J., \emph{Elements and cyclic subgroups of finite order of the Cremona group},
Comment. Math. Helv. {\bf 86} (2011), 469--497.


\bibitem{brunoverra}
Bruno A.,  Verra A.,  \emph{
The quadro-quadric Cremona transformations of $\bf P^4$ and $\bf P^5$}, 
Preprint 2011.

\bibitem{Cantat} Cantat S.,  Lamy S., \emph{Normal subgroups of the Cremona group}, preprint arXiv:1007.0895.


\bibitem{Carbonaro1} Marletta Carbonaro C., \emph{I sistemi omaloidici di ipersuperficie dell' $S_4$, legati
alle algebre complesse di ordine 4, dotate di modulo}, Rend. Accad. Sci. Fis. Mat. Napoli {\bf 15} (1949), 168--201.

\bibitem{Carbonaro2} Marletta Carbonaro  C., \emph{La funzione inversa $y=x^{-1}$ in una algebra complessa
semi--semplice}, Boll. Accad. Gioenia Sci. Nat. Catania  {\bf 2} (1953), 195--201.

\bibitem{Cremona} Cremona L., 
\emph{Sulle transformazioni razionali nello spazio},  Annali di Mat.  {\bf 5} 
 (1871,1873),  131--163. 



\bibitem{EisenbudHarris} 
Eisenbud, D.,  Harris, J.,  {The geometry of schemes}, Grad. Texts in Math. 197,  Springer-Verlag, 2000.


\bibitem{ElguetaSuazo}
Elgueta L.,  Suazo A., \emph{Jordan nilalgebras of dimension 6}, 
Proyecciones {\bf 21} (2002), 277Ð289. 



\bibitem{EinShB} Ein L., Shepherd-Barron N., {\it Some special
Cremona transformations}, Amer. J. Math. {\bf 111} (1989), 783--800.



\bibitem{gabriel}
Gabriel P., 
\emph{ Finite representation type is open},   Proceedings of the International Conference on Representations of Algebras, Paper No. 10 Carleton Math. Lecture Notes, No. {\bf 9}  (1974)




\bibitem{GerstenhaberMyung}
Gerstenhaber M.,  Myung H. C., \emph{On commutative power-associative nilalgebras of low dimension},
Proc. Amer. Math. Soc. {\bf 48} (1975), 29Ð32.


\bibitem{MC2} 
Grayson D. R.,   Stillman M. E.,
          \emph{Macaulay2, a software system for research 
                   in algebraic geometry},  {Available at 
              \href{http://www.math.uiuc.edu/Macaulay2/}%
                   {http://www.math.uiuc.edu/Macaulay2/}}.



\bibitem{harris}
Harris J., \emph{Curves in projective space}, Sminaire de Mathmatiques Suprieures {\bf 85},  Presses Univ. Montral,  1982. 

\bibitem{hudson}
 Hudson H., {Cremona transformations in plane and space},
Cambridge University Press, 1927.
      

\bibitem{jacobson}
Jacobson N., {Structure and representations of Jordan algebras}, American Mathematical Society Colloquium Publications, Vol. XXXIX AMS, 1968.

\bibitem{KaYa}
Kaji H.,   Yasukura O., \emph{Projective geometry of Freudenthal's varieties of certain type}, Michigan Math. J. {\bf 52} (2004),  515Ð542. 



\bibitem{Kashuba} Kashuba I., \emph{Variety of Jordan algebras in small dimensions}, Algebra Discrete Math. {\bf 2} (2006),  62--76. 

        

\bibitem{McCrimmon-book}
McCrimmon K., {A taste of Jordan algebras.} Universitext,  Springer-Verlag, 2004.


\bibitem{MaVaNou}
Mallol C., Varro R.,  Nourigat M., \emph{Sur la classification des nilalgbres commutatives de nilindice 3}, Comm. Algebra {\bf 33} (2005),  4149--4158. 



\bibitem{MD-P}
Martin-Deschamps M.,  Piene R., \emph{ Arithmetically Cohen-Macaulay curves in ${\bf P}^4$ of degree $4$ and genus $0$},   Manuscripta Math. {\bf 93} (1997), 391--408.


\bibitem{NotariSpreafico}
Notari R., Spreafico M. L., \emph{A stratification of Hilbert schemes by initial ideals and applications},
 Manuscripta Math. {\bf 101} (2000), 429--448.


\bibitem{pan} Pan I., \emph{Sur les transformations de Cremona de bidegr $(3,3)$,}  Enseign. Math. {\bf 43} (1997),  285--297.

\bibitem{PanRusso} 
Pan, I.,  Russo, F. \emph{
Cremona transformations and special double structures},
Manuscripta Math. {\bf 117} (2005), 491--510. 

\bibitem{PRV}
Pan I., Ronga F.,   Vust T.,  \emph{Transformations birationnelles quadratiques de l'espace projectif complexe  trois dimensions},  Ann. Inst. Fourier {\bf  51} (2001),  1153--1187.

\bibitem{PR1} Pirio L., Russo F., \emph{On projective varieties $n$-covered by curves of degree $\delta$}, to appear in  Comm. Math. Helv., Preprint arXiv:1109.3566. 

\bibitem{PR2} Pirio L.,  Russo F., \emph{Extremal varieties 3-rationally connected by cubics, quadro-quadric Cremona transformations  and cubic Jordan algebras},   Preprint arXiv:1109.3573. 


\bibitem{PIRIO} Pirio L., \emph{Classification of rank three \  Jordan   algebras   of low dimension},  in preparation.


\bibitem{schafer}
 Schafer R. D., {An introduction to nonassociative algebras}, Pure and Applied Mathematics, Vol. 22 Academic Press,  1995.


\bibitem{scorzaALGEBRA4} 
 Scorza G., \emph{Le algebre del $4.^\circ$ ordine}, 
Atti Accad. Sci. Fis. Mat. Napoli, II. Ser. 20, No.14,  (1935),  1--83.


\bibitem{semple}
Semple J. G., \emph{Cremona transformations of space of four dimensions by means of quadrics, and the reverse transformations},
Philosophical Transactions of the Royal Society of London {\bf 228} (1929), 331-376.


\bibitem{SR} Semple  J. G., Roth L., Introduction to Algebraic Geometry,
Oxford University Press, 1949 and 1986.
\
\bibitem{SBCDELSS}
Snyder V.,  Black A.,  Coble A.,  Dye L.,  Emch A.,  Lefschetz S.,  Sharpe F. R.,  Sisam C., 
Selected topics in algebraic geometry,
Second edition, Chelsea Publishing Co., 1970.

\bibitem{Spampinato} Spampinato N., \emph{I gruppi di affinit\` a e di trasformazioni
quadratiche piane legati alle due algebre complesse doppie dotate di modulo}, 
Boll.  Accad.  Gioenia Sci. Nat. Catania {\bf 67} (1935), 80--86.

\bibitem{study}
 Study E., \emph{\"Uber Systeme complexer Zahlen und ihre Anwendung in der Theorie der Transformationsgruppen}, 
Monatsh. f. Math. I. (1890)  283--355.


\bibitem{wesseler}
 Wesseler H., \emph{Der Klassification der Jordan-Algebren niedrider Dimension}, Staatsexamensarbeit
f\"ur das Lehramt am Gymnasium, M\"unster, 1978.








\end{thebibliography}
\end{document}